\theoremstyle{plain}
\newtheorem{lemma}{Lemma}[section]
\newtheorem{proposition}[lemma]{Proposition}
\newtheorem{theorem}[lemma]{Theorem}
\newtheorem{assumption}[lemma]{Assumption}
\theoremstyle{definition}
\newtheorem{definition}[lemma]{Definition}
\newtheorem{example}[lemma]{Example}
\newtheorem{remark}[lemma]{Remark}
\newlist{todolist}{itemize}{2}
\setlist[todolist]{label=$\square$}
\numberwithin{equation}{section}
\begin{document}

\title{Finite Dimensional Projections of HJB Equations in the Wasserstein Space
}
\newcommand\shorttitle{Finite Dimensional Projections of HJB Equations in the Wasserstein Space}
\date{June 23, 2025}

\author{Andrzej {\'{S}}wi{\k{e}}ch\footnote{School of Mathematics, Georgia Institute of Technology, Atlanta, GA 30332, USA; Email: swiech@math.gatech.edu}}
\author{Lukas Wessels\footnote{School of Mathematics, Georgia Institute of Technology, Atlanta, GA 30332, USA; Email: wessels@gatech.edu}}
\newcommand\authors{Andrzej {\'{S}}wi{\k{e}}ch and Lukas Wessels}

\affil{}

\maketitle

\unmarkedfntext{\textit{Mathematics Subject Classification (2020) ---} 28A33, 35D40, 35R15, 49L12, 49L25, 49N80, 93E20}


\unmarkedfntext{\textit{Keywords and phrases ---} Hamilton--Jacobi--Bellman equations in infinite dimension, viscosity solutions, mean field control, stochastic particle systems, Wasserstein space}


\begin{abstract}
This paper continues the study of controlled interacting particle systems with common noise started in [W. Gangbo, S. Mayorga and A. {\'{S}}wi{\k{e}}ch, \textit{SIAM J. Math. Anal.} 53 (2021), no. 2, 1320--1356] and [S. Mayorga and A. {\'{S}}wi{\k{e}}ch, \textit{SIAM J. Control Optim.} 61 (2023), no. 2, 820--851]. First, we extend the following results of the previously mentioned works to the case of multiplicative noise: (i) We generalize the convergence of the value functions $u_n$ corresponding to control problems of $n$ particles to the value function $V$ corresponding to an appropriately defined infinite dimensional control problem; (ii) we prove, under certain additional assumptions, $C^{1,1}$ regularity of $V$ in the spatial variable. The second main contribution of the present work is the proof that if $DV$ is continuous (which, in particular, includes the previously proven case of $C^{1,1}$ regularity in the spatial variable), the value function $V$ projects precisely onto the value functions $u_n$. Using this projection property, we show that optimal controls of the finite dimensional problem correspond to optimal controls of the infinite dimensional problem and vice versa. In the case of a linear state equation, we are able to prove that $V$ projects precisely onto the value functions $u_n$ under relaxed assumptions on the coefficients of the cost functional by using approximation techniques in the Wasserstein space, thus covering cases where $V$ may not be differentiable.
\end{abstract}


\section{Introduction}


This work is a continuation of the study of optimal control problems governed by weakly interacting particle systems with common noise started in \cite{gangbo_mayorga_swiech_2021,mayorga_swiech_2023}. In these works, the authors proved, among other results, that the value functions for the $n$ particle systems with additive common noise converge, as $n$ tends to infinity, to a value function corresponding to a control problem in the Wasserstein space. In the present work, we generalize this convergence to the case of multiplicative noise, and build on these results to further enhance the understanding of controlled particle systems.

More precisely, we fix an initial and terminal time $0\leq t <T < \infty$ as well as an initial state $\mathbf{x} = (x_1,\dots,x_n)\in (\mathbb{R}^d)^n$, and consider the optimal control problem of minimizing the cost functional
\begin{equation}
	J_n(t,\mathbf{x};\mathbf{a}(\cdot)) := \mathbb{E} \left [ \int_t^T \left ( \frac1n \sum_{i=1}^n \left ( l_1(X_i(s),\mu_{\mathbf{X}(s)}) + l_2(a_i(s)) \right ) \right ) \mathrm{d}s + \mathcal{U}_T(\mu_{\mathbf{X}(T)}) \right ]
\end{equation}
over some class of admissible controls $\mathbf{a}(\cdot) = (a_1(\cdot),\dots,a_n(\cdot)) : [t,T]\times \Omega^{\prime} \to (\mathbb{R}^d)^n$ on some probability space $\Omega'$, subject to the system of state equations
\begin{equation}\label{intro:state_equation}
	\begin{cases}
		\mathrm{d}X_i(s) = [-a_i(s) + b(X_i(s),\mu_{\mathbf{X}(s)})] \mathrm{d}s + \sigma(X_i(s),\mu_{\mathbf{X}(s)}) \mathrm{d}W(s)\\
		X_i(t) = x_i\in \mathbb{R}^d,
	\end{cases}
\end{equation}
$i=1,\dots,n$. Here, $l_1: \mathbb{R}^d \times \mathcal{P}_2(\mathbb{R}^d) \to \mathbb{R}$, $l_2: \mathbb{R}^d \to \mathbb{R}$ and $\mathcal{U}_T:\mathcal{P}_2(\mathbb{R}^d) \to \mathbb{R}$ denote the running and terminal cost functions, respectively, defined on the Wasserstein space $\mathcal{P}_2(\mathbb{R}^d)$. Moreover, the vector $\mathbf{X}(s) = (X_1(s),\dots,X_n(s)) \in (\mathbb{R}^d)^n$ denotes the state of the $n\in \mathbb{N}$ particles which are interacting through their empirical measure $\mu_{\mathbf{X}(s)} = \frac{1}{n} \sum_{i=1}^n \delta_{X_i(s)}$. Finally, the state equation is driven by an $\mathbb{R}^{d^{\prime}}$-valued Wiener process $(W(s))_{s\in [t,T]}$, and $b:\mathbb{R}^d \times \mathcal{P}_2(\mathbb{R}^d) \to \mathbb{R}^d$ and $\sigma: \mathbb{R}^d \times \mathcal{P}_2(\mathbb{R}^d)\to \mathbb{R}^{d\times d^{\prime}}$ are the drift and diffusion coefficients, respectively.

One of the major approaches to these kinds of control problems is the dynamic programming approach introduced by Richard Bellman in the 1950s, see \cite{bellman_1957}. The central object of study in this approach is the value function defined as
\begin{equation}
	u_n(t,\mathbf{x}) := \inf_{\mathbf{a}(\cdot)} J_n(t,\mathbf{x};\mathbf{a}(\cdot)).
\end{equation}
Assuming sufficient regularity, this value function can be used to derive necessary and sufficient optimality conditions, and to construct optimal feedback controls. It is well known that, under appropriate assumptions on the coefficients of the control problem, $u_n$ is the unique viscosity solution of the Hamilton--Jacobi--Bellman (HJB) equation
\begin{equation}\label{intro:finite_dimensional_HJB}
	\begin{cases}
		\partial_t u_n + \frac12 \text{Tr}(A_n(\mathbf{x},\mu_{\mathbf{x}}) D^2u_n) - \frac{1}{n} \sum_{i=1}^n H(x_i,\mu_{\mathbf{x}},nD_{x_i}u_n) = 0,\\
		\qquad\qquad\qquad\qquad\qquad\qquad\qquad\qquad\qquad\qquad\qquad (t,\mathbf{x})\in (0,T)\times (\mathbb{R}^d)^n\\
		u_n(T,\mathbf{x}) = \mathcal{U}_T(\mu_{\mathbf{x}}), \quad \mathbf{x}\in (\mathbb{R}^d)^n,
	\end{cases}
\end{equation}
where $A_n(\mathbf{x},\mu_{\mathbf{x}})$ is an $nd\times nd$-matrix consisting of $n^2$ block matrices $(A_n)_{ij}(\mathbf{x},\mu_{\mathbf{x}})=\sigma(x_i,\mu_{\mathbf{x}}) \sigma^{\top}(x_j,\mu_{\mathbf{x}})$, $i,j=1,\dots,n$, of dimensions $d\times d$, and $H:\mathbb{R}^d \times \mathcal{P}_2(\mathbb{R}^d) \times \mathbb{R}^d \to \mathbb{R}$ denotes the Hamiltonian given by
\begin{equation}
	H(x,\mu,p) = -b(x,\mu) \cdot p - l_1(x,\mu) + \sup_{q\in\mathbb{R}^d} (q\cdot p - l_2(q)).
\end{equation}

We are interested in the behavior of the value functions $u_n$ as the number of particles $n$ tends to infinity. A formal computation suggests that the value functions $u_n$ converge to the value function associated with an optimal control problem in $\mathcal{P}_2(\mathbb{R}^d)$, see \cite[Section 3.3]{djete_possamai_tan_2022}, which solves the limiting equation
\begin{equation}\label{intro:HJB_on_Wasserstein_space}
	\begin{cases}
		\partial_t \mathcal{U}(t,\mu) + \frac12 \int_{\mathbb{R}^d} \text{Tr} \left [ D_x \partial_{\mu} \mathcal{U}(t,\mu)(x)\sigma(x,\mu)\sigma^{\top}(x,\mu) \right ] \mu(\mathrm{d}x)\\
		\quad + \frac12 \int_{(\mathbb{R}^d)^2} \text{Tr} \left [ \partial_{\mu}^2 \mathcal{U}(t,\mu)(x,x^{\prime}) \sigma (x,\mu) \sigma^{\top}(x^{\prime},\mu) \right ] \mu(\mathrm{d}x) \mu(\mathrm{d}x^{\prime}) \\
		\quad - \int_{\mathbb{R}^d} H(x,\mu,\partial_{\mu} \mathcal{U}(t,\mu)(x)) \mu(\mathrm{d}x) =0, \quad (t,\mu)\in (0,T) \times \mathcal{P}_2(\mathbb{R}^d)\\
		\mathcal{U}(T,\mu) = \mathcal{U}_T(\mu),\quad \mu \in \mathcal{P}_2(\mathbb{R}^d).
	\end{cases}
\end{equation}
Since this equation is posed on the Wasserstein space $\mathcal{P}_2(\mathbb{R}^d)$, which is not a linear space, it is notoriously hard to solve. In \cite{cardaliaguet_2013,lions_2007-2011}, P.-L. Lions introduced the notion of the so-called L-viscosity solution, in which such equations are lifted to a space of square-integrable random variables $E$ on some non-atomic measure space $\Omega$. In the present work, we take $E:= L^2(\Omega;\mathbb{R}^d)$, $\Omega = (0,1)$. In this approach, the lift $V:[0,T]\times E \to \mathbb{R}$ of a function $\mathcal{V}:[0,T]\times\mathcal{P}_2(\mathbb{R}^d) \to \mathbb{R}$ is defined as $V(t,X) := \mathcal{V}(t,X_{\texttt{\#}} \mathcal{L}^1)$, where $X_{\texttt{\#}}\mathcal{L}^1$ denotes the push-forward of the one-dimensional Lebesgue measure $\mathcal{L}^1$ through the random variable $X\in E$. Now, $\mathcal{V}$ is called an L-viscosity solution of equation \eqref{intro:HJB_on_Wasserstein_space}, if its lift $V$ is a viscosity solution\footnote{Here, the notion of viscosity solution for equations in Hilbert spaces is the natural extension of the one used in finite dimensional spaces, see also Appendix A.} of the lifted HJB equation\footnote{Note that the second order term coincides with the term $\text{Tr}(\Sigma(X)(\Sigma(X))^{\ast} D^2V)$, where the trace is taken in $E$.}
\begin{equation}\label{intro:lifted_HJB}
	\begin{cases}
		\partial_t V + \frac12 \sum_{m=1}^{d^{\prime}} \langle D^2 V \Sigma(X) e^{\prime}_m, \Sigma(X) e^{\prime}_m \rangle_E - \tilde{H}(X,X_{\texttt{\#}}\mathcal{L}^1, DV) =0,\\
		\qquad\qquad\qquad\qquad\qquad\qquad\qquad\qquad\qquad\qquad\qquad (t,X)\in (0,T)\times E\\
		V(T,X) = U_T(X), \quad X\in E,
	\end{cases}
\end{equation}
where $(e_m^{\prime})_{m=1,\dots, d^{\prime}}$ denotes the standard basis of $\mathbb{R}^{d^{\prime}}$, $\Sigma(X)e_m^{\prime}(\omega) := \sigma(X(\omega),X_{\texttt{\#}}\mathcal{L}^1)e_m^{\prime}$, the lifted Hamiltonian $\tilde{H}: E\times \mathcal{P}_2(\mathbb{R}^d)\times E \to \mathbb{R}$ is given by
\begin{equation}\label{definition_H_tilde}
	\tilde{H}(X,\beta,P) := \int_{\Omega} H(X(\omega),\beta,P(\omega)) \mathrm{d}\omega
\end{equation}
and $U_T$ is the lift of $\mathcal{U}_T$. Since $E$ is a Hilbert space, the existing theory of viscosity solutions can be applied to the lifted equation. We refer to \cite{fabbri_gozzi_swiech_2017} for the theory of HJB equations in Hilbert spaces.


Differential equations on the Wasserstein space such as \eqref{intro:HJB_on_Wasserstein_space} and other similar equations in abstract spaces arise not only in mean-field control problems, but also in variational problems in spaces of probability measures, control problems with partial observation, differential games, large deviations and fluid dynamic problems. The theory of such equations is relatively new, however it has been growing rapidly and accordingly, the literature on the subject is quite extensive, see for example \cite{AmFe,badreddine_frankowska_2022,bandini_cosso_fuhrman_pham_2019,bayraktar_cosso_pham_2018,bayraktar_ekren_zhang_2023,bensoussan_graber_yam_2024,bensoussan_yam_2019,bertucci_2024,burzoni_ignazio_reppen_soner_2020,CKT,CKTT,cosso_gozzi_kharroubi_pham_rosestolato_2023,cosso_gozzi_kharroubi_pham_rosestolato_2024,cosso_pham_2019,daudin_jackson_seeger_2023,daudin_seeger_2024,feng_katsoulakis_2009,feng_nguyen_2012,feng_swiech_2013,feng_mikami_zimmer_2021,gangbo_nguyen_tudorascu_2008,gangbo_mayorga_swiech_2021,gangbo_tudorascu_2019,GHN,hynd_kim_2015,jerhaoui_prost_zidani_2024,jimenez_marigonda_quincampoix_2020,jimenez_marigonda_quincampoix_2023,jimenez_quincampoix_2018,kraaij_2022,kraaij_2020,LSZ,mimikos-stamatopoulos_2023,pham_wei_2017,pham_wei_2018,soner_yan_2024,soner_yan_2024_2}. 
For partial differential equations (PDEs) on finite dimensional spaces, it is well-known that the notion of viscosity solution based on sub- and superjets is equivalent with the notion based on test functions. This equivalence carries over to equations on (sufficiently regular) infinite dimensional linear spaces, however subtle issues arise when equations have unbounded terms and, generally, definitions of viscosity solutions are based on the proper choice of test functions. The situation is even more complicated for equations on spaces of probability measures or more general metric spaces. Several notions of viscosity solutions have been proposed, based on various notions of derivatives, differentials or jets, different and special test functions, various interpretations of terms in the equations, use of upper and lower Hamiltonians, etc. These are dictated by things like the type of space, special structure of the PDE, the ability to create local maxima and minima, and the need to be able to carry out a version of the doubling technique to prove comparison principles.
In the case of the Wasserstein space, the lifting procedure introduced by P.-L. Lions in \cite{cardaliaguet_2013,lions_2007-2011} and briefly recalled above opens the door to the application of the theory for Hilbert spaces, which we are going to use. In fact, once our equation  \eqref{intro:HJB_on_Wasserstein_space} is lifted, the whole analysis is carried out in our Hilbert space $E$. We also refer to \mbox{\cite{cardaliaguet_delarue_lasry_lions_2019,carmona_delarue_2018,carmona_delarue_2018_2}} where this lifting approach is applied to mean-field games.


Since the definition of a viscosity solution comes with low regularity, but differentiability is needed to construct optimal feedback controls, proving higher regularity is an important issue. The first result in this direction can be found in \cite{lions_1988}. More recently, this issue has been addressed in \cite{bensoussan_graber_yam_2024,bensoussan_yam_2019,defeo_swiech_wessels_2023,gangbo_meszaros_2022,mayorga_swiech_2023}.



It has been known for a long time that the solution of an uncontrolled particle system of the type \eqref{intro:state_equation} converges to the solution of a mean-field stochastic differential equation, see \cite{sznitman_1991}. This phenomenon is known as propagation of chaos. It is not obvious, however, that this convergence ``commutes'' with the optimization in the control problem. The first general result in this direction in the case of idiosyncratic noise seems to be \cite{lacker_2017}, see also \cite{budhiraja_dupuis_fischer_2012,carmona_delarue_2015,fornasier_solobmrino_2014,fischer_livieri_2016} for earlier contributions in this direction. Subsequently, this convergence was generalized to the case of common noise, see \cite[Theorem 2]{djete_possamai_tan_2022}, and even to the so-called extended mean-field control problems, i.e., control problems that involve the law of the control, see \cite{djete_2022}.


Once the convergence is established, the question about the rate of convergence arises. The first result in this direction for equations with both, idiosyncratic and common noise, was obtained in \cite{germain_pham_warin_2022}, assuming the existence of a classical solution of the infinite dimensional HJB equation. It was proved there that the rate of convergence is of order $\mathcal{O}(1/n)$. A simpler argument to obtain this was pointed out in \mbox{\cite[Section 1.3]{cardaliaguet_daudin_jackson_souganidis_2023}}, see also \mbox{\cite{cardaliaguet_delarue_lasry_lions_2019,carmona_delarue_2018_2}}, where a similar method was applied in the context of mean-field games. It should also be noted that, even though no explicit statement was made there, it was already noticed in  \cite{germain_pham_warin_2022} that in the absence of idiosyncratic noise, if $\mathcal{U}$ is a smooth solution of \eqref{intro:HJB_on_Wasserstein_space} then $u_n(t,\mathbf{x}):=\mathcal{U}(t,\mu_{\mathbf{x}})$ is a solution of \eqref{intro:finite_dimensional_HJB}. If the infinite dimensional value function is not smooth, the situation becomes much more delicate and has recently received much attention, see \cite{cardaliaguet_daudin_jackson_souganidis_2023,cardaliaguet_jackson_mimikos-stamatopoulos_souganidis_2023,cardaliaguet_souganidis_2023,daudin_delarue_jackson_2024}. In all of these works, the presence of the idiosyncratic noise introduces a regularization of the finite dimensional value functions.

Very recently, in \cite{liao_meszaros_mou_zhou_2024}, the authors considered a closely related problem and obtained several results that are similar to ours. Let us highlight some similarities and differences to our work. In the mentioned work, the authors investigate so-called neural stochastic differential equations (SDEs), which can be viewed as a system of controlled coupled SDEs, where the control is the same for each particle. In their model, the SDEs are driven solely by an additive common noise. Introducing several regularizations, in particular idiosyncratic noise at the level of the finite dimensional SDEs, and using backward SDE methods as well as the stochastic maximum principle, the authors show that the finite dimensional value functions and the optimal feedback controls converge to functions on the space of probability measures. Moreover, it is shown that the limits project precisely onto the finite dimensional objects. Note that these limits are not identified as the value function and optimal control of an associated infinite dimensional control problem. The analysis in \cite{liao_meszaros_mou_zhou_2024} is based on finite dimensional problems while in our manuscript the infinite dimensional HJB equation \eqref{intro:HJB_on_Wasserstein_space} plays the central role.


In \cite{gangbo_mayorga_swiech_2021,mayorga_swiech_2023}, in the case of a constant diffusion coefficient $A_n(\mathbf{x},\mu_{\mathbf{x}}) \equiv A_n$, it was shown that the solutions $u_n$ of \eqref{intro:finite_dimensional_HJB} converge uniformly on bounded sets in the Wasserstein space to a function $\mathcal{V}:[0,T]\times\mathcal{P}_2(\mathbb{R}^d) \to \mathbb{R}$. Moreover, it was shown that $\mathcal{V}$ is an L-viscosity solution of equation \eqref{intro:HJB_on_Wasserstein_space}, and in \cite{mayorga_swiech_2023} the authors introduced an optimal control problem in $E$ for which equation \eqref{intro:lifted_HJB} is the associated HJB equation. Under certain additional assumptions, it was proved in \cite{mayorga_swiech_2023} that the lift $V$ of $\mathcal{V}$ has $C^{1,1}$ regularity in the spatial variable. In the present work, first, we generalize the convergence and the characterization of the limit to the case of multiplicative noise. The proof relies on uniform continuity estimates for the finite dimensional value functions $u_n$. In contrast to \cite{mayorga_swiech_2023}, these estimates are obtained using the stochastic representation of the value function. Then, we show that the $C^{1,1}$ regularity of the lift $V$ carries over to the case of multiplicative noise under essentially the same assumptions. As in \cite{mayorga_swiech_2023}, the proof uses the fact that $V(t,\cdot)$ is of class $C^{1,1}$ if it is semiconcave and semiconvex, see \cite{lasry_lions_1986}. The semiconcavity and semiconvexity are obtained using stochastic methods.

In the second part of the paper, we investigate the relationship between the value functions $u_n$ corresponding to the controlled $n$ particle systems and the projection $V_n:[0,T]\times (\mathbb{R}^d)^n \to \mathbb{R}$ of the value function $V$ defined as $V_n(t,\mathbf{x}) := V(t,\sum_{i=1}^n x_i \mathbf{1}_{A^n_i})$, where $\mathbf{1}_{A^n_i}$ denotes the indicator function on $A^n_i = (\frac{i-1}{n}, \frac{i}{n})\subset (0,1) = \Omega$, $i=1,\dots,n$. Notice that $V(t,\sum_{i=1}^n x_i \mathbf{1}_{A^n_i})=\mathcal{V}(t,\mu_{\mathbf{x}}).$ Using the stochastic representation of the value function $V$, we are able to show the inequality $V_n\leq u_n$. The opposite inequality is proved using viscosity methods under the assumption that $DV$ is continuous. This required regularity of $V$ in particular follows from the results of Section \ref{section:C11_regularity}, see Remark \ref{remark_joint_continuity}(ii), however we point out that the $C^{1,1}$ regularity in the spatial variable is not needed to prove this inequality. Taking both inequalities together, we obtain that, under the mentioned regularity assumption, the value function $V$ projects precisely onto the finite dimensional value functions $u_n$, which in particular conclusively answers the question about the convergence rate in this setting. Note that in contrast to \cite{liao_meszaros_mou_zhou_2024}, we also give a characterization of the limit $V$ as the value function of a control problem in a Hilbert space. Based on this projection property, we then show that optimal controls of the finite dimensional control problems lift to optimal controls of the infinite dimensional control problem. Moreover, we show that piecewise constant optimal controls of the infinite dimensional control problem project onto optimal controls of the finite dimensional control problem. This refines the corresponding result from \cite{liao_meszaros_mou_zhou_2024}.

In the last part, we show that in the case of a linear state equation, the previously discussed projection property can be generalized to cases where the value function may not be differentiable. This proof relies on an approximation of functions on the Wasserstein space. While such approximations are known, the existing methods in the literature (see e.g. \cite{cosso_martini_2023,daudin_delarue_jackson_2024,daudin_jackson_seeger_2023}) either require stronger assumptions on the function that is to be approximated, or they do not preserve convexity, which is crucial for our analysis. Therefore, we modify the technique developed in \cite{cosso_martini_2023}.

After the completion of the present manuscript, the manuscript \cite{cecchin_daudin_jackson_martini_2024} on the convergence problem for mean-field control was posted on arXiv. While the main part of that work considers the case of common and idiosyncratic noise, Section 6 is concerned with the case of zero idiosyncratic noise. In particular, in \cite[Proposition 6.1]{cecchin_daudin_jackson_martini_2024}, they make a similar observation as ours in Proposition \ref{proposition:V_n_leq_u_n} here. Moreover, in \cite[Proposition 6.11]{cecchin_daudin_jackson_martini_2024}, the authors prove the exact projection property in their setting and under different assumptions.


The remainder of the paper is organized as follows. In Section \ref{section:Preliminaries}, we fix the notation and state the assumptions we will be working under. Section \ref{section:Convergence} discusses the convergence of the solution of the finite dimensional HJB equations to the L-viscosity solution of the infinite dimensional HJB equation. We also identify the limit as the value function of a control problem in a Hilbert space. This proves in particular that the finite dimensional value functions converge to the infinite dimensional value function. In Section \ref{section:C11_regularity}, we prove, under two different sets of assumptions, $C^{1,1}$ regularity in the spatial variable of the infinite dimensional value function $V$. Assuming that $DV$ is continuous, which in particular is satisfied in the previously discussed case of $C^{1,1}$ regularity, we prove in Section \ref{section:projection_C11} that the infinite dimensional value function projects precisely onto the finite dimensional value functions. In Section \ref{section:feedbacks}, we investigate the relationship between optimal controls for the finite dimensional control problems and optimal controls for the infinite dimensional control problem. Finally, in Section \ref{section:projection_no_C11}, we extend the projection property from Section \ref{section:projection_C11} to cases where $V$ may not be differentiable.

\section{Preliminaries}\label{section:Preliminaries}
\noindent
In this section, we are going to fix the notation, pose the finite and infinite dimensional control problems that we will be working with, and state the main assumptions.


\subsection{Notation}

Throughout the paper, we are going to use the following notation.

\begin{itemize}
	\item For $d,d^{\prime} \in \mathbb{N}$, let $(e_k)_{k=1,\dots, d}$ and $(e^{\prime}_m)_{m=1,\dots,d^{\prime}}$ denote the standard basis of $\mathbb{R}^d$ and $\mathbb{R}^{d^{\prime}}$, respectively. Let $|\,\cdot\,|$ denote the Euclidean norm. For $x,y\in \mathbb{R}^d$, let $x\cdot y$ denote the inner product in $\mathbb{R}^d$.
	\item For $x\in \mathbb{R}^d$, let $\delta_{x}$ denote the Dirac measure at the point $x$. For $n \in \mathbb{N}$, and $\mathbf{x}= (x_1,\dots,x_n)\in (\mathbb{R}^d)^n$, let $\mu_{\mathbf{x}} := \frac1n \sum_{i=1}^n \delta_{x_i}$, and $|\mathbf{x}|_r := \frac{1}{n^{1/r}} \left ( \sum_{i=1}^n |x_i|^r \right )^{1/r}$.
	\item Let $\Omega = (0,1)$ and for $r\geq 1$, let $L^r(\Omega;\mathbb{R}^d)$ denote the Lebesgue space with norm $\|X\|_{L^r(\Omega;\mathbb{R}^d)} := ( \int_{\Omega} |X(\omega)|^r \mathrm{d}\omega )^{1/r}$, $X\in L^r(\Omega;\mathbb{R}^d)$. Let $E:= L^2(\Omega;\mathbb{R}^d)$ denote the space of all square-integrable $\mathbb{R}^d$-valued random variables. The inner product and the norm will be denoted by $\langle \cdot, \cdot \rangle_E$ and $\|\,\cdot\,\|_E$, respectively. Let $\mathcal{L}^1$ denote the Lebesgue measure on $\mathbb{R}$, and for $X\in E$, let $X_{\texttt{\#}} \mathcal{L}^1$ denote the pushforward measure on $\mathbb{R}^d$.
	\item For a subset $B$ of some set, let $\mathbf{1}_B$ denote the characteristic function of $B$.
	\item For $n\in \mathbb{N}$, let $A_i^n = (\frac{i-1}{n},\frac{i}{n}) \subset (0,1) =\Omega$, $i=1,\dots,n$. For $\mathbf{x} = (x_1,\dots,x_n)\in (\mathbb{R}^d)^n$, let
	\begin{equation}\label{E_n}
		X^{\mathbf{x}}_n = \sum_{i=1}^n x_i \mathbf{1}_{A^n_i},
	\end{equation}
	and let $E_n$ denote the $nd$-dimensional subspace of $E$ consisting of random variables of the form \eqref{E_n}. Note that $(X^{\mathbf{x}}_n)_{\texttt{\#}} \mathcal{L}^1 = \mu_{\mathbf{x}}$.
	\item For $r\in [1,2]$, let $\mathcal{P}_r(\mathbb{R}^d)$ denote the set of all Borel probability measures on $\mathbb{R}^d$ with finite $r$-th moment, i.e., $\mathcal{M}_r(\mu):= \int_{\mathbb{R}^d} |x|^r \mu(\mathrm{d}x) < \infty$. Let $d_r:\mathcal{P}_r(\mathbb{R}^d) \times \mathcal{P}_r(\mathbb{R}^d) \to \mathbb{R}$ denote the Wasserstein distance, i.e.,
	\begin{equation}\label{definition_wasserstein_distance}
		\begin{split}
			&d_r(\mu,\beta)\\
			&= \inf_{\gamma\in \Gamma(\mu,\beta)} \left (\int_{\mathbb{R}^d\times \mathbb{R}^d} |x-y|^r \gamma(\mathrm{d}x,\mathrm{d}y) \right )^{\frac1r}\\
			&= \inf \left \{ \left ( \int_{\Omega} |X(\omega)-Y(\omega)|^r \mathrm{d}\omega \right )^{\frac1r}: \; X,Y\in L^r(\Omega;\mathbb{R}^d), \; X_{\texttt{\#}} \mathcal{L}^1 = \mu, \; Y_{\texttt{\#}} \mathcal{L}^1 = \beta \right \}.
		\end{split}
	\end{equation}
	Here, $\Gamma(\mu,\beta)$ is the set of all probability measures on $\mathbb{R}^d \times \mathbb{R}^d$ with first and second marginals $\mu$ and $\beta$, respectively. Note that $d^r_r(\mu,\delta_0) = \mathcal{M}_r(\mu)$. For more details on the Wasserstein space, see e.g. \cite{ambrosio_gigli_savare_2005,villani_2009}. The proof of the second equality in \eqref{definition_wasserstein_distance} can be found for instance in \cite[Theorem 3.9]{Gangbo-notes}.
	\item For $R>0$ and $\bar{r}\in [r,2]$, we denote $\mathfrak{M}^{\bar{r}}_{R} := \{ \mu\in \mathcal{P}_r(\mathbb{R}^d) : \mathcal{M}_{\bar r}(\mu) \leq R \}$.
\end{itemize}
Regarding mappings on Hilbert spaces and their derivatives, we are going to use the following notation.
\begin{itemize}
	\item For a matrix or a trace-class operator on a Hilbert space, we write $\text{Tr}(A)$ for the trace of $A$.
	\item For a Hilbert space $\mathcal{E}$ and $R>0$, $B_R(X)$ denotes the open ball of radius $R$ centered at $X\in \mathcal{E}$.
	\item For Hilbert spaces $\mathcal{E},\mathcal{E}_1,\mathcal{E}_2$, let $L(\mathcal{E}_1,\mathcal{E}_2)$ denote the space of bounded linear operators from $\mathcal{E}_1$ to $\mathcal{E}_2$, and let $L_2(\mathcal{E}_1,\mathcal{E}_2)$ denote the space of Hilbert--Schmidt operators from $\mathcal{E}_1$ to $\mathcal{E}_2$. For $k\in \mathbb{N}$, let $C^k(\mathcal{E})$ denote the set of all real-valued $k$-times continuously Fr\'echet differentiable functions on $\mathcal{E}$. Let $C^{\infty}(\mathcal{E})$ denote the set of all real-valued functions on $\mathcal{E}$ having continuous Fr\'echet derivatives of any order. Let $C^{1,1}(\mathcal{E}_1,\mathcal{E}_2)$ denote the set of all $\mathcal{E}_2$-valued Fr\'echet differentiable functions on $\mathcal{E}_1$ with Lipschitz continuous derivative. In the case $\mathcal{E}_2=\mathbb{R}$, we write $C^{1,1}(\mathcal{E}_1)$. For a function $\varphi:(0,T)\times \mathcal{E}\to\mathbb{R}$ we will write $\partial_t\varphi$ to denote the partial derivative of $\varphi$ with respect to the first variable, and $D\varphi$ and $D^2\varphi$ to denote the first and second order Fr\'echet derivatives of $\varphi$ with respect to the second variable, respectively. We denote by $C^{1,2}((0,T)\times \mathcal{E})$ the set of all real-valued functions on $(0,T)\times\mathcal{E}$ such that $\partial_t\varphi,D\varphi,D^2\varphi$ exist and are continuous.
	\item For $\varphi_n \in C^2((\mathbb{R}^d)^n)$, let $D_{x_i}$, $i=1,\dots,n$, denote the gradient with respect to $x_i\in\mathbb{R}^d$, and let $D_{x_ix_j}^2$ denote the second derivative with respect to $x_i\in \mathbb{R}^d$ and $x_j\in\mathbb{R}^d$.
	\item For $k\in\mathbb{N}$, let $C^k(\mathbb{R}^d\times\mathcal{P}_2(\mathbb{R}^d))$ denote the set of all real-valued functions on $\mathbb{R}^d\times \mathcal{P}_2(\mathbb{R}^d)$ having $k$ derivatives which are continuous. Here, in the second variable, we consider Lions derivatives, called $L$-derivatives, denoted by $\partial^k_{\mu} u$, where $u\in C^k(\mathbb{R}^d \times \mathcal{P}_2(\mathbb{R}^d))$. Note that $\partial_{\mu} u : \mathbb{R}^d\times \mathcal{P}_2(\mathbb{R}^d) \times \mathbb{R}^d \to \mathbb{R}^d$, and higher order derivatives are taken component-wise. For more details on $L$-derivatives, see e.g. \cite{carmona_delarue_2018}.
\end{itemize}

\subsection{The Control Problems}

Throughout the manuscript, $T>0$ denotes the fixed finite time horizon. We use the standard setting for stochastic control problems, which can be found in, e.g., \cite{fabbri_gozzi_swiech_2017,fleming_soner_2006,yong_zhou_1999}. A five-tuple $\tau := (\Omega^{\prime},\mathcal{F},\mathcal{F}^t_s, \mathbb{P},W)$ is a reference probability space, if
\begin{itemize}
	\item $(\Omega^{\prime},\mathcal{F},\mathbb{P})$ is a complete probability space;
	\item $W$ is a standard $\mathbb{R}^{d^{\prime}}$-valued Wiener process on $(\Omega^{\prime},\mathcal{F},\mathbb{P})$ such that $W(t)=0$, $\mathbb{P}$-a.s.;
	\item $\mathcal{F}^t_s$ is the filtration generated by $W$, augmented by all $\mathbb{P}$-null sets of $\mathcal{F}$.
\end{itemize}

\subsubsection{Finite Dimensional Problem}

Let $\mathbf{X}(s) = (X_1(s),\dots,X_n(s)) \in (\mathbb{R}^d)^n$ be the solution
\begin{equation}\label{state_equation}
	\begin{cases}
		\mathrm{d}X_i(s) = [-a_i(s) + b(X_i(s),\mu_{\mathbf{X}(s)})] \mathrm{d}s + \sigma(X_i(s),\mu_{\mathbf{X}(s)}) \mathrm{d}W(s)\\
		X_i(t) = x_i\in \mathbb{R}^d,
	\end{cases}
\end{equation}
$i=1,\dots,n$. Consider the cost functional
\begin{equation}\label{cost_functional}
	J_n(t,\mathbf{x};\mathbf{a}(\cdot)) := \mathbb{E} \left [ \int_t^T \left ( \frac1n \sum_{i=1}^n \left ( l_1(X_i(s),\mu_{\mathbf{X}(s)}) + l_2(a_i(s)) \right ) \right ) \mathrm{d}s + \mathcal{U}_T(\mu_{\mathbf{X}(T)}) \right ]
\end{equation}
for $(t,\mathbf{x}) \in [0,T] \times (\mathbb{R}^d)^n$. The value function $u_n$ is defined as
\begin{equation}\label{finite_dimensional_value_function}
	u_n(t,\mathbf{x}) = \inf_{\mathbf{a}(\cdot)\in \mathcal{A}^n_t} J_n(t,\mathbf{x};\mathbf{a}(\cdot)),
\end{equation}
where the class of admissible controls $\mathcal{A}^n_t$ consists of all processes $\mathbf{a}(\cdot)$ such that
\begin{itemize}
	\item there is a reference probability space $\tau := (\Omega^{\prime}, \mathcal{F}, \mathcal{F}^t_s,\mathbb{P},W)$ such that $\mathbf{a}(\cdot):[t,T]\times \Omega^{\prime} \to (\mathbb{R}^d)^n$ is $\mathcal{F}^t_s$-progressively measurable, and
	\item $\| \mathbf{a}(\cdot) \|_{M^2(t,T;(\mathbb{R}^d)^n)}^2 := \mathbb{E} [ \int_t^T |\mathbf{a}(s)|^2 \mathrm{d}s ] < \infty$.
\end{itemize}

\subsubsection{Infinite Dimensional Problem}\label{Section_Infinite_Dimensional_Problem}

Recall that $E=L^2(\Omega;\mathbb{R}^d)$, where $\Omega=(0,1)$. We now consider the infinite dimensional SDE\footnote{Throughout this manuscript, we consider strong solutions in the probabilistic sense, see e.g. \cite[Section 5.2]{karatzas_shreve_1991} for SDEs in finite dimensional spaces and \cite[Section 1.4.1]{fabbri_gozzi_swiech_2017} for SDEs in Hilbert spaces.}
\begin{equation}\label{Lifted_State_Equation}
	\begin{cases}
		\mathrm{d}X(s) = [-a(s) + B(X(s))] \mathrm{d}s + \Sigma(X(s)) \mathrm{d}W(s)\\
		X(t) = X \in E,
	\end{cases}
\end{equation}
where $B:E\to E$ is defined as $B(X)(\omega) := b(X(\omega),X_{\texttt{\#}}\mathcal{L}^1)$, and $\Sigma : E \to L_2(\mathbb{R}^{d^{\prime}} , E)$ is defined as $\Sigma(X)(\omega) := \sigma(X(\omega),X_{\texttt{\#}}\mathcal{L}^1)$. We consider the cost functional
\begin{equation}
	J(t,X;a(\cdot)) = \mathbb{E} \left [ \int_t^T \left ( L_1(X(s)) + L_2(a(s)) \right ) \mathrm{d}s + U_T(X(T)) \right ],
\end{equation}
where $L_1:E\to \mathbb{R}$ is given by
\begin{equation}
	L_1(X) := \int_{\Omega} l_1(X(\omega),X_{\texttt{\#}}\mathcal{L}^1) \mathrm{d}\omega
\end{equation}
and $L_2:E\to \mathbb{R}$ is given by
\begin{equation}
	L_2(X) := \int_{\Omega} l_2(X(\omega))\mathrm{d}\omega.
\end{equation}
Recall that $U_T:E\to \mathbb{R}$ is the lift of $\mathcal{U}_T: \mathcal{P}_2(\mathbb{R}^d) \to \mathbb{R}$, i.e., $U_T(X) := \mathcal{U}_T(X_{\texttt{\#}}\mathcal{L}^1)$.

In this case, the value function is defined as
\begin{equation}\label{Lifted_Value_Function}
	U(t,X) := \inf_{a(\cdot)\in \mathcal{A}_t} J(t,X,a(\cdot)),
\end{equation}
where the class of admissible controls $\mathcal{A}_t$ consists of all processes $a(\cdot)$ such that
\begin{itemize}
	\item there is a reference probability space $\tau := (\Omega^{\prime}, \mathcal{F}, \mathcal{F}^t_s,\mathbb{P},W)$ such that $a(\cdot):[t,T]\times \Omega^{\prime} \to E$ is $\mathcal{F}^t_s$-progressively measurable, and
	\item $\| a(\cdot) \|_{M^2(t,T;E)}^2 := \mathbb{E} [ \int_t^T \|a(s)\|_E^2 \mathrm{d}s ] < \infty$. 
\end{itemize}

\subsection{Assumptions}

Let us now state the assumptions which will be used in the paper.

\begin{assumption}\label{Assumption_b_sigma_lipschitz}
	Let $r \in [1,2)$.
	\begin{enumerate}[label=(\roman*)]
		\item Let $b:\mathbb{R}^d \times \mathcal{P}_2(\mathbb{R}^d) \to \mathbb{R}^d$ be Lipschitz continuous with respect to the $|\,\cdot\,|\times d_r(\cdot,\cdot)$-distance, i.e., there is a constant $C$ such that
		\begin{equation}\label{Lipschitz_b}
			|b(x,\mu) - b(y,\beta)| \leq C( |x-y| + d_r(\mu,\beta))
		\end{equation}
		for all $x,y\in \mathbb{R}^d$ and $\mu,\beta\in \mathcal{P}_2(\mathbb{R}^d)$. Moreover, let the function $(x,X) \mapsto \tilde{b}(x,X) := b(x,X_{\texttt{\#}}\mathcal{L}^1)$ be in $C^{1,1}(\mathbb{R}^d\times E,\mathbb{R}^d)$.
		\item Let $\sigma:\mathbb{R}^d \times \mathcal{P}_2(\mathbb{R}^d) \to \mathbb{R}^{d\times d^{\prime}}$ be Lipschitz continuous with respect to the $|\,\cdot\,|\times d_r(\cdot,\cdot)$-distance, i.e., there is a constant $C$ such that		\begin{equation}\label{Lipschitz_sigma}
			|\sigma(x,\mu) - \sigma(y,\beta)| \leq C( |x-y| + d_r(\mu,\beta)),
		\end{equation}
		for all $x,y\in \mathbb{R}^d$ and $\mu,\beta\in \mathcal{P}_2(\mathbb{R}^d)$. Moreover, let the function $(x,X) \mapsto \tilde{\sigma}(x,X) := \sigma(x,X_{\texttt{\#}}\mathcal{L}^1)$ be in $C^{1,1}(\mathbb{R}^d\times E,\mathbb{R}^{d\times d^{\prime}})$.
	\end{enumerate}
\end{assumption}

Note that the Lipschitz continuity of $b$ and $\sigma$ in equations \eqref{Lipschitz_b} and \eqref{Lipschitz_sigma}, implies the Lipschitz continuity of the lifted operators $B$ and $\Sigma$ defined in Section \ref{Section_Infinite_Dimensional_Problem}. Indeed, we have
\begin{equation}
	\begin{split}
		\|\Sigma(X) - \Sigma(Y) \|^2_{L_2(\mathbb{R}^{d^{\prime}},E)} &= \int_{\Omega} \left | \sigma(X(\omega),X_{\texttt{\#}}\mathcal{L}^1) - \sigma(Y(\omega),Y_{\texttt{\#}}\mathcal{L}^1) \right |_{\mathbb{R}^{d\times d^{\prime}}}^2 \mathrm{d}\omega\\
		&\leq C \int_{\Omega} \left ( |X(\omega)-Y(\omega)|^2 + d_r^2(X_{\texttt{\#}}\mathcal{L}^1,Y_{\texttt{\#}}\mathcal{L}^1) \right ) \mathrm{d}\omega\\
		& \leq C \|X-Y\|_E^2.
	\end{split}
\end{equation}
The proof for $B$ is similar. Thus, since the coefficient functions $B$ and $\Sigma$ are Lipschitz continuous in $E$, and $W$ is finite dimensional, it is standard that the state equation \eqref{Lifted_State_Equation} has a unique (strong) solution which has continuous trajectories.

\begin{assumption}\label{Assumption_running_terminal_cost}
	Let $r\in [1,2)$.
	\begin{enumerate}[label=(\roman*)]
		\item Let $l_1:\mathbb{R}^d \times \mathcal{P}_2(\mathbb{R}^d)\to \mathbb{R}$ be Lipschitz continuous with respect to the $|\,\cdot\,|\times d_r(\cdot,\cdot)$-distance, i.e., there is a constant $C$ such that
		\begin{equation}
			|l_1(x,\mu) - l_1(y,\beta)| \leq C(|x-y|+d_r(\mu,\beta))
		\end{equation}
		for all $x,y\in \mathbb{R}^d$ and $\mu,\beta\in \mathcal{P}_2(\mathbb{R}^d)$. Moreover, let $(x,X) \mapsto \tilde{l}_1(x,X) := l_1(x,X_{\texttt{\#}}\mathcal{L}^1)$ be in $C^{1,1}(\mathbb{R}^d\times E)$.
		\item Let there be constants $C_1, C_2,C_3>0$ such that $l_2\in C^{1,1}(\mathbb{R}^d)$ satisfies
		\begin{equation}\label{assumption_growth_l_2}
			-C_1 + C_2|p|^2 \leq l_2(p) \leq C_1 + C_3|p|^2
		\end{equation}
		for all $p\in \mathbb{R}^d$. Moreover, let the function
		\begin{equation}
			p \mapsto l_2(p) - \nu|p|^2
		\end{equation}
		be convex for some constant $\nu \geq 0$.
		\item Let $\mathcal{U}_T$ be Lipschitz continuous on $\mathcal{P}_r(\mathbb{R}^d)$, and be such that its lift $U_T$ belongs to $C^{1,1}(E)$.
	\end{enumerate}
\end{assumption}

\begin{assumption}\label{Assumption_Linear_State_Equation}
	\begin{enumerate}[label=(\roman*)]
		\item Let $b:\mathbb{R}^d \times \mathcal{P}_2(\mathbb{R}^d) \to \mathbb{R}^d$ and $\sigma:\mathbb{R}^d \times \mathcal{P}_2(\mathbb{R}^d) \to \mathbb{R}^{d\times d^{\prime}}$ be such that their lifts $B:E\to E$ and $\Sigma:E \to L_2(\mathbb{R}^{d^{\prime}},E)$ are affine linear and continuous.
		\item Let $l_1:\mathbb{R}^d \times \mathcal{P}_2(\mathbb{R}^d) \to \mathbb{R}$ and $\mathcal{U}_T :\mathcal{P}_2(\mathbb{R}^d) \to \mathbb{R}$ be such that $\tilde l_1:\mathbb{R}^d \times E \to \mathbb{R}$ and $U_T: E \to \mathbb{R}$ are convex.
	\end{enumerate}
\end{assumption}

We remark that since $l_2\in C^{1,1}(\mathbb{R}^d)$, we have
\begin{equation}
	|l_2(p)-l_2(q)|\leq C(1+|p|+|q|)|p-q|\quad\text{for all}\,\,p,q\in \mathbb{R}^d.
\end{equation}

\begin{remark}
	Let $g:\mathcal{P}_2(\mathbb{R}^d) \to \mathbb{R}$ and let $G:E \to \mathbb{R}, G(X):=g(X_{\texttt{\#}} \mathcal{L}^1)$ be its lift. We recall that if $G$ is continuously Fr\'echet differentiable (that is $g$ is continuously $L$-differentiable) then we have
	\[
	DG(X)(\omega)=\partial_\mu g(X_{\texttt{\#}} \mathcal{L}^1)(X(\omega)).
	\]
	Moreover, if $DG$ is globally Lipschitz, then $\partial_\mu g$ can be modified so that for every $X$, $\partial_\mu g(X_{\texttt{\#}} \mathcal{L}^1)(\cdot)$ is Lipschitz with the same Lipschitz constant. We refer to \cite[Sections 5.2, 5.3]{carmona_delarue_2018} for more on this and $L$-differentiability. In particular, the $C^{1,1}$ regularity of $G$ can be rephrased in terms of the properties of $\partial_\mu g$. For instance, if we assume that $\partial_\mu g$ is Lipschitz continuous with respect to the $d_2(\cdot,\cdot)\times |\cdot|$-metric, then
	\begin{equation}
		\begin{split}
			\| DG(X) - DG(Y) \|^2_E &= \| \partial_{\mu} g(X_{\texttt{\#}} \mathcal{L}^1)(X) - \partial_{\mu}g(Y_{\texttt{\#}} \mathcal{L}^1)(Y) \|^2_E\\
			&= \int_{\Omega} | \partial_{\mu}g(X_{\texttt{\#}} \mathcal{L}^1)(X(\omega)) - \partial_{\mu}g(Y_{\texttt{\#}} \mathcal{L}^1)(Y(\omega)) |^2 \mathrm{d}\omega\\
			&\leq C \int_{\Omega} \left ( d^2_2(X_{\texttt{\#}} \mathcal{L}^1,Y_{\texttt{\#}} \mathcal{L}^1) + | X(\omega) - Y(\omega) |^2 \right ) \mathrm{d}\omega\\
			&\leq C \|X-Y \|_E^2.
		\end{split}
	\end{equation}
	Regarding the convexity of $G$, we note that the $L$-convexity of $g$ in the sense of \cite[Definition 5.70]{carmona_delarue_2018} implies convexity of its lift $G$. Moreover, by \cite[Proposition 5.79]{carmona_delarue_2018}, if $g$ is continuously $L$-differentiable, $g$ is $L$-convex if and only if $g$ is displacement convex. We refer to \cite[Section 5.5]{carmona_delarue_2018} for more.
\end{remark}

In the following, we discuss some more specific examples of functions $\sigma$ that satisfy the assumptions above. For possible extensions of these examples as well as examples of the remaining coefficients, see \cite[Examples 2.4 and 2.5]{mayorga_swiech_2023}.

\begin{example}
	\begin{enumerate}[label=(\roman*)]
		\item 
		Let $\sigma: \mathbb{R}^d\times \mathcal{P}_2(\mathbb{R}^d) \to \mathbb{R}^{d\times d^{\prime}}$ be given by $\sigma(x,\mu) = \sigma^1(x) + \sigma^2(\mu)$, where $\sigma^1:\mathbb{R}^d \to \mathbb{R}^{d\times d^{\prime}}$ and $\sigma^2:\mathcal{P}_2(\mathbb{R}^d) \to \mathbb{R}^{d\times d^{\prime}}$. Now, assume that each entry of $\sigma^1$ is $C^{1,1}$ with bounded derivative. Moreover, let each entry $\sigma^2_{km}$, $k=1,\dots,d$, $m=1,\dots,d^{\prime}$ of $\sigma^2$ be of the following type. Let there be functions $g_{k,m}:\mathbb{R} \to \mathbb{R}$ which are $C^{1,1}$ with bounded derivatives, $\zeta_{k,m}:\mathbb{R}^d \to \mathbb{R}$ which are $C^{1,1}$ with bounded derivatives, and let $\sigma_{km}^2(\mu) := g_{k,m} \left ( \int_{\mathbb{R}^d} \zeta_{k,m}(x) \mu(\mathrm{d}x) \right )$, $\mu\in \mathcal{P}_r(\mathbb{R}^d)$. Then, the lift is given by $\tilde{\sigma}_{km}^2(X) = g_{k,m} \left ( \int_{\Omega} \zeta_{k,m} \circ X \right )$, $X\in E$. Moreover, $D\zeta_{k,m}(X) \in E$ and the Frech\'et derivative is $D\tilde{\sigma}_{km}^2(X) = g^{\prime}_{k,m} \left ( \int_{\Omega} \zeta_{k,m} \circ X \right ) D\zeta_{k,m}(X)$, so $\tilde{\sigma}_{k,m}^2$ is $C^{1,1}(E)$. The Lipschitz continuity on $\mathcal{P}_r(\mathbb{R}^d)$ follows as in \cite[Example 2.4]{mayorga_swiech_2023}. 
		
		\item For the affine linear case, let $\sigma:\mathbb{R}^d \times \mathcal{P}_2(\mathbb{R}^d) \to \mathbb{R}^{d\times d^{\prime}}$ be given by $\sigma(x,\mu) := \sigma^1 x + \int_{\mathbb{R}^d} \sigma^2y \mu(\mathrm{d}y) + \sigma^3$, where $\sigma^1,\sigma^2\in L(\mathbb{R}^d, \mathbb{R}^{d\times d^{\prime}})$, $\sigma^3\in \mathbb{R}^{d\times d^{\prime}}$. In this case, the lift $\Sigma : E \to L_2(\mathbb{R}^{d^{\prime}}, E)$ is given by $\Sigma(X) := \sigma^1 X + \int_{\Omega} \sigma^2 X + \sigma^3$, which is linear and continuous.
		
	\end{enumerate}
\end{example}

\begin{remark}
	\begin{enumerate}[label=(\roman*)]
		\item Throughout the paper, we consider functions $u:\mathbb{R}^d \times \mathcal{P}_2(\mathbb{R}^d)\to \mathbb{R}^d$. We are going to write $\tilde u: \mathbb{R}^d \times E \to \mathbb{R}^d$ to denote the lift of $u$, i.e., $\tilde u(x,X) := u(x,X_{\texttt{\#}}\mathcal{L}^1)$, $(x,X)\in \mathbb{R}^d \times E$. Moreover, we are going to use capital letters to denote the Nemytskii operator associated with $u$, i.e., $U:E\to E$, $U(X)(\omega) := u(X(\omega),X_{\texttt{\#}}\mathcal{L}^1)$, $X\in E$, $\omega \in \Omega$. For functions $l: \mathbb{R}^d \times \mathcal{P}_2(\mathbb{R}^d) \to \mathbb{R}$, we are also going to write $\tilde{l}:\mathbb{R}^d \times E \to \mathbb{R}$ to denote the lift. However, in this case, the function denoted by the capital letter is defined by $L:E\to \mathbb{R}$, $L(X) := \int_{\Omega} l(X(\omega),X_{\texttt{\#}}\mathcal{L}^1) \mathrm{d} \omega$.
		\item We denote by $l_2^{\ast}(p) := \sup_{q\in \mathbb{R}^d} \{ q \cdot p - l_2(q) \}$ the convex conjugate of $l_2$.
		\item In order to link the finite dimensional control problem with the infinite dimensional control problem, we need the following identity regarding the Hamiltonian of the lifted HJB equation:
		\begin{equation}\label{eq:equalitycosts}
			\int_{\Omega} \sup_{q\in \mathbb{R}^d} (q\cdot P(\omega) - l_2(q)) \mathrm{d}\omega = \sup_{a\in E} \left \{ \langle a,P \rangle_E - \int_{\Omega} l_2(a(\omega)) \mathrm{d}\omega \right \}.
		\end{equation}
		This identity follows from the measurable selection theorem \cite[Theorem 18.19]{aliprantis_border_2006} and equation \eqref{assumption_growth_l_2}. Alternatively, this can be proved directly by considering first the uniformly convex perturbed costs $l_2^{\varepsilon}(a) := l_2(a) + \varepsilon |a|^2$, $\varepsilon>0$, for which the maximizer in the supremum is unique and is equal to $q(\omega)=(Dl_2^\varepsilon)^{-1}(P(\omega))$, which belongs to $E$ and hence the equality holds. One then easily shows that \eqref{eq:equalitycosts} follows in the limit as $\varepsilon \to 0$ since the maximizers are bounded in $E$.
	\end{enumerate}
\end{remark}

\section{Convergence of \texorpdfstring{$u_n$}{un} to \texorpdfstring{$V$}{V}}\label{section:Convergence}

\subsection{Estimates for the Finite Dimensional Problem}

First, we prove the several estimates for the solution of the state equation.
\begin{proposition}\label{Prop_Finite_A_Priori}
	Let Assumption \ref{Assumption_b_sigma_lipschitz} be satisfied. Let $\mathbf{X},\mathbf{X}^0$, and $\mathbf{X}^1$ be the solutions of equation \eqref{state_equation} with initial conditions $\mathbf{x},\mathbf{x}^0$, and $\mathbf{x}^1$, respectively, and control $\mathbf{a}(\cdot)\in \mathcal{A}^n_t$. Then, the following estimates hold:
	\begin{enumerate}[label=(\roman*)]
		\item There is a constant $C$, independent of $n\in\mathbb{N}$, such that
		\begin{equation}\label{a_priori_estimate_for_state_equation}
			\mathbb{E} \left [ \sup_{s\in [t,T]} | \mathbf{X}(s) |_r \right ] \leq C \left ( 1 + | \mathbf{x}|_r + \frac{1}{\sqrt{n}} \| \mathbf{a}(\cdot) \|_{M^2(t,T;(\mathbb{R}^d)^n)} \right )
		\end{equation}
		for all $\mathbf{x}\in (\mathbb{R}^d)^n$, and $\mathbf{a}(\cdot)\in \mathcal{A}^n_t$.
		\item There is a constant $C$, independent of $n\in \mathbb{N}$, such that
		\begin{equation}\label{time_continuity_for_state_equation}
			\mathbb{E} \left [ \sup_{s^{\prime} \in [t,s]} |\mathbf{X}(s^{\prime}) - \mathbf{x} |_r \right ] \leq \frac{C}{\sqrt{n}} \mathbb{E} \left [ \int_t^s |\mathbf{a}(s^{\prime})|^2 \mathrm{d}s^{\prime} \right ]^{\frac12} + C \left (1+ |\mathbf{x}|_r \right )(s-t)^{\frac12}
		\end{equation}
		for all $s\in [t,T]$, $\mathbf{x}\in (\mathbb{R}^d)^n$, and $\mathbf{a}(\cdot)\in \mathcal{A}^n_t$.
		\item There is a constant $C$, independent of $n\in \mathbb{N}$, such that
		\begin{equation}
			\mathbb{E} \left [ \sup_{s\in[t,T]} | \mathbf{X}^1(s) - \mathbf{X}^0(s)|_r \right ]\leq C| \mathbf{x}^1 - \mathbf{x}^0|_r
		\end{equation}
		for all $\mathbf{x}^1, \mathbf{x}^0 \in (\mathbb{R}^d)^n$, and $\mathbf{a}(\cdot)\in \mathcal{A}^n_t$.
	\end{enumerate}
\end{proposition}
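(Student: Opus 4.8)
The plan is to prove all three estimates by one and the same scheme: write the state equation in integral form, take $r$-th powers, use that $|\cdot|_r$ is (a rescaling of) the $\ell^r$-norm on $(\mathbb R^d)^n$ and hence obeys Minkowski's inequality, and then close a Gronwall estimate for
\[
  F(s):=\frac1n\sum_{i=1}^n\mathbb E\Big[\sup_{s'\in[t,s]}|X_i(s')|^r\Big]
\]
(and, for (ii) and (iii), for its analogues associated with $\mathbf X(s')-\mathbf x$ and $\mathbf X^1(s')-\mathbf X^0(s')$). The feature that keeps all constants independent of $n$ is that the empirical measure is normalized: $d_r(\mu_{\mathbf X(s')},\delta_0)=|\mathbf X(s')|_r$ and $d_r(\mu_{\mathbf X^1(s')},\mu_{\mathbf X^0(s')})\le|\mathbf X^1(s')-\mathbf X^0(s')|_r$, so that plugging the Lipschitz/linear-growth bounds of Assumption \ref{Assumption_b_sigma_lipschitz} into the operation $\tfrac1n\sum_i(\cdot)^r$ produces only the normalized quantities $|\cdot|_r$ and no $n$-dependent factors.

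For (i) I would split $X_i(s)$ into the initial value, the control integral $-\int_t^s a_i\,ds'$, the drift integral $\int_t^s b(X_i,\mu_{\mathbf X})\,ds'$, and the stochastic integral $M_i(s):=\int_t^s\sigma(X_i,\mu_{\mathbf X})\,dW$, and estimate each after applying $\tfrac1n\sum_i(\cdot)^r$ and $\mathbb E\sup_{s'\le s}$: the initial term gives $|\mathbf x|_r^r$; the control term, by H\"older in time and Jensen (using $r<2$), is bounded by $\bigl((s-t)^{1/2}n^{-1/2}\|\mathbf a\|_{M^2}\bigr)^r$; the drift term, by linear growth of $b$ and H\"older in time, by $C\int_t^s F(s')\,ds'$; and the martingale term by the Burkholder--Davis--Gundy (BDG) inequality applied to each $M_i$. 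Gronwall's lemma then yields $F(T)\le C\bigl(1+|\mathbf x|_r^r+(n^{-1/2}\|\mathbf a\|_{M^2})^r\bigr)$ with $C=C(T)$, and (i) follows from $\mathbb E\sup_{[t,T]}|\mathbf X|_r\le F(T)^{1/r}$ together with the subadditivity of $u\mapsto u^{1/r}$.

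The one genuine subtlety — which I expect to be the main obstacle — arises in the martingale term: BDG produces $\bigl(\int_t^s|X_i(s')|^2\,ds'\bigr)^{r/2}$ from the quadratic growth of $\sigma$, and its $i$-average brings in $\int_t^s|\mathbf X(s')|_2^2\,ds'$, i.e. the $|\cdot|_2$-norm, which for $r<2$ is \emph{not} controlled by $|\cdot|_r$ and hence not by $F$. I would circumvent this by never forming $|\mathbf X|_2$: estimate the term before averaging over $i$, either via Young's inequality $\bigl(\int_t^s|X_i(s')|^2\,ds'\bigr)^{r/2}\le\varepsilon\sup_{s'\le s}|X_i(s')|^r+C_\varepsilon\int_t^s|X_i(s')|^r\,ds'$, so that $\tfrac1n\sum_i$ gives $\varepsilon F(s)+C_\varepsilon\int_t^s F(s')\,ds'$ and one absorbs the $\varepsilon F(s)$ for $\varepsilon$ small; or, equivalently, by first working on a short time interval, where $\bigl(\int_t^s|X_i|^2\bigr)^{r/2}\le(s-t)^{r/2}\sup_{s'\le s}|X_i(s')|^r$ carries a small prefactor, and then iterating over $[t,T]$. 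The analogous $|\mathbf X|_r^2$-term inside BDG is handled in the same way. (A routine stopping-time truncation, or simply the standard $L^2$ a priori bound, guarantees $F(T)<\infty$ so that Gronwall applies.)

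Estimate (ii) then requires no further Gronwall: writing $\mathbf X(s')-\mathbf x$ as control $+$ drift $+$ stochastic integral, the control part gives the term $Cn^{-1/2}\bigl(\mathbb E\int_t^s|\mathbf a|^2\bigr)^{1/2}$; the drift part, using linear growth together with (i) \emph{evaluated at the intermediate time} (so that only $\|\mathbf a\|_{M^2(t,s)}$ enters), gives $C(s-t)(1+|\mathbf x|_r)+Cn^{-1/2}\|\mathbf a\|_{M^2(t,s)}$, with $C(s-t)(1+|\mathbf x|_r)\le C(s-t)^{1/2}(1+|\mathbf x|_r)$ up to the constant $T^{1/2}$; and the stochastic-integral part, by BDG and (i), gives $C(s-t)^{1/2}\bigl(1+|\mathbf x|_r+n^{-1/2}\|\mathbf a\|_{M^2(t,s)}\bigr)$, the factor $(s-t)^{1/2}$ being exactly the one BDG produces. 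Summing these yields (ii). Finally, (iii) is the homogeneous analogue of (i): the control cancels in the equation for $\Delta_i:=X_i^1-X_i^0$, whose drift- and diffusion-increments are bounded by $C(|\Delta_i|+|\Delta|_r)$ via the Lipschitz hypotheses; running the same per-particle / BDG / Young / Gronwall argument on $\tilde F(s):=\tfrac1n\sum_i\mathbb E\sup_{s'\le s}|\Delta_i(s')|^r$, now with no inhomogeneous term, gives $\tilde F(T)\le C|\mathbf x^1-\mathbf x^0|_r^r$, whence (iii).
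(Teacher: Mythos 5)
Your proposal is correct and follows essentially the same route as the paper: integral form, $r$-th powers, per-particle BDG, averaging $\tfrac1n\sum_i$ with the normalized moment bounds $\mathcal M_r(\mu_{\mathbf X})\le\tfrac1n\sum_i|X_i|^r$ and the H\"older/Jensen control estimate, then Gr\"onwall, with (ii) obtained from (i) without a second Gr\"onwall and (iii) as the homogeneous Lipschitz analogue. In particular, the subtlety you single out — the $(\int|X_i|^2)^{r/2}$ term from BDG — is handled in the paper by exactly your first remedy, a Young-type splitting that absorbs a small multiple of $\sup_{s'}|X_i(s')|^r$ (and of $\sup_{s'}\mathcal M_r(\mu_{\mathbf X(s')})$) into the left-hand side before averaging over $i$.
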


\begin{proof}
	$(i)$ From the state equation \eqref{state_equation}, it follows that for every $s^{\prime}\in [t,s]$, we have
	\begin{equation}\label{Estimate_State_Equation_01}
		\begin{split}
			&|X_i(s^{\prime})|^r\\
			&\leq C \Bigg ( |x_i|^r + \int_t^s |a_i(t^{\prime})|^r \mathrm{d}t^{\prime} + \int_t^s | b(X_i(t^{\prime}),\mu_{\mathbf{X}(t^{\prime})}) |^r \mathrm{d}t^{\prime}\\
			&\qquad\qquad\qquad\qquad\qquad\qquad\qquad\qquad + \left | \int_t^{s^{\prime}} \sigma(X_i(t^{\prime}),\mu_{\mathbf{X}(t^{\prime})}) \mathrm{d}W(t^{\prime}) \right |^r \Bigg ).
		\end{split}
	\end{equation}
	Due to the linear growth of $b$, we obtain for the second integral
	\begin{equation}
		\mathbb{E} \left [ \int_t^{s} | b(X_i(t^{\prime}),\mu_{\mathbf{X}(t^{\prime})}) |^r \mathrm{d}t^{\prime} \right ] \leq C \mathbb{E} \left [ \int_t^s (1+ |X_i(t^{\prime})|^r + \mathcal{M}_r(\mu_{\mathbf{X}(t^{\prime})}) ) \mathrm{d}t^{\prime} \right ].
	\end{equation}
	Moreover, by Burkholder--Davis--Gundy inequality and due to the linear growth of $\sigma$, we obtain for the third integral
	\begin{equation}
		\begin{split}
			&\mathbb{E} \left [ \sup_{s^{\prime}\in [t,s]} \left | \int_t^{s^{\prime}} \sigma(X_i(t^{\prime}),\mu_{\mathbf{X}(t^{\prime})}) \mathrm{d}W(t^{\prime}) \right |^r \right ]\\
			&\leq C \mathbb{E} \left [ \left ( \int_t^s | \sigma(X_i(t^{\prime}),\mu_{\mathbf{X}(t^{\prime})}) |^2 \mathrm{d}t^{\prime} \right )^{\frac{r}{2}} \right ]\\
			&\leq C \mathbb{E} \left [ \left ( \int_t^s (1+ |X_i(t^{\prime})|^2 + \mathcal{M}^{\frac{2}{r}}_r(\mu_{\mathbf{X}(t^{\prime})}) ) \mathrm{d}t^{\prime} \right )^{\frac{r}{2}} \right ]\\
			&\leq \frac{1}{4} \mathbb{E} \left [ \sup_{t^{\prime}\in [t,s]} |X_i(t^{\prime})|^r + \sup_{t^{\prime}\in [t,s]} \mathcal{M}_r(\mu_{\mathbf{X}(t^{\prime})}) \right ] \\
			&\quad + C \mathbb{E} \left [ 1 + \int_t^s | X_i(t^{\prime})|^r \mathrm{d}t^{\prime} + \int_t^s \mathcal{M}_r(\mu_{\mathbf{X}(t^{\prime})}) \mathrm{d}t^{\prime} \right ].
		\end{split}
	\end{equation}
	Now, we note that
	\begin{equation}\label{estimate_M_r}
		\mathbb{E} \left [ \sup_{t^{\prime}\in [t,s]} \mathcal{M}_r(\mu_{\mathbf{X}(t^{\prime})}) \right ] = \mathbb{E} \left [ \sup_{t^{\prime}\in [t,s]} \left ( \frac{1}{n} \sum_{i=1}^n |X_i(t^{\prime})|^r \right ) \right ] \leq \frac{1}{n} \sum_{i=1}^n \mathbb{E} \left [ \sup_{t^{\prime}\in [t,s]} |X_i(t^{\prime})|^r \right ]
	\end{equation}
	as well as
	\begin{equation}
		\mathbb{E} \left [ \int_t^s \mathcal{M}_r(\mu_{\mathbf{X}(t^{\prime})}) \mathrm{d}t^{\prime} \right ] \leq \int_t^s \frac{1}{n} \sum_{i=1}^n \mathbb{E} \left [ \sup_{t^{\prime}\in [t,s^{\prime}]} | X_i(t^{\prime})|^r \right ] \mathrm{d}s^{\prime}.
	\end{equation}
	Moreover, we have for some constant $C$, independent of $n$,
	\begin{equation}\label{control_estimtate}
		\frac1n \sum_{i=1}^n \mathbb{E} \left [ \int_t^s |a_i(t^{\prime})|^r \mathrm{d}t^{\prime} \right ] \leq \frac{C}{n^{r/2}} \mathbb{E} \left [ \int_t^s |\mathbf{a}(t^{\prime})|^2 \mathrm{d}t^{\prime} \right ]^{\frac{r}{2}}.
	\end{equation}
	Thus, taking the supremum over $[t,s]$ in equation \eqref{Estimate_State_Equation_01}, the expectation, the sum over $i=1,\dots,n$, and then dividing both sides by $n$, we obtain
	\begin{equation}
		\begin{split}
			& \frac1n \sum_{i=1}^n \mathbb{E} \left [ \sup_{s^{\prime}\in [t,s]} |X_i(s^{\prime})|^r \right ]
			\leq \frac{1}{2n} \sum_{i=1}^n \mathbb{E} \left [ \sup_{s^{\prime}\in [t,s]} |X_i(s^{\prime})|^r \right ] 
			\\
			&\qquad\qquad
			+ C \left ( 1 + |\mathbf{x}|_r^r + \frac{1}{n^{r/2}} \mathbb{E} \left [ \int_t^s |\mathbf{a}(s^{\prime})|^2 \mathrm{d}s^{\prime} \right ]^{\frac{r}{2}} + \int_t^s \frac{1}{n} \sum_{i=1}^n \mathbb{E} \left [ \sup_{t^{\prime}\in [t,s^{\prime}]} |X_i(t^{\prime})|^r \right ] \mathrm{d}s^{\prime} \right ).
		\end{split}
	\end{equation}
	Applying Gr\"onwall's inequality, we obtain
	\begin{equation}\label{Gronwall}
		\frac1n \sum_{i=1}^n \mathbb{E} \left [ \sup_{s \in [t,T]} |X_i(s)|^r \right ] \leq C \left ( 1 + |\mathbf{x}|_r^r + \frac{1}{n^{r/2}} \mathbb{E} \left [ \int_t^T |\mathbf{a}(s)|^2 \mathrm{d}s \right ]^{\frac{r}{2}} \right ).
	\end{equation}
	Noting that
	\begin{equation}
		\mathbb{E} \left [ \sup_{s\in [t,T]} | \mathbf{X}(s)|_r \right ] \leq \left ( \frac{1}{n} \sum_{i=1}^n \mathbb{E} \left [ \sup_{s\in [t,T]} |X_i(s)|^r \right ] \right )^{\frac{1}{r}}
	\end{equation}
	concludes the proof of part $(i)$.
	
	$(ii)$ First, we observe that for every $s^{\prime}\in [t,s]$, we have
	\begin{equation}\label{Estimate_State_Equation_02}
		\begin{split}
			&|X_i(s^{\prime})-x_i|^r \\
			&\leq C \left ( \int_t^s |a_i(t^{\prime})|^r \mathrm{d}t^{\prime} + \int_t^s | b(X_i(t^{\prime}),\mu_{\mathbf{X}(t^{\prime})}) |^r \mathrm{d}t^{\prime} + \left | \int_t^{s^{\prime}} \sigma(X_i(t^{\prime}),\mu_{\mathbf{X}(t^{\prime})}) \mathrm{d}W(t^{\prime}) \right |^r \right ).
		\end{split}
	\end{equation}
	For the second integral, due to the linear growth of $b$, we have
	\begin{equation}
		\mathbb{E} \left [ \int_t^s | b(X_i(t^{\prime}),\mu_{\mathbf{X}(t^{\prime})}) |^r \mathrm{d}t^{\prime} \right ]
		\leq C \left ( 1 + \mathbb{E} \left [ \sup_{t^{\prime}\in [t,s]} |X_i(t^{\prime})|^r + \sup_{t^{\prime}\in [t,s]} \mathcal{M}_r(\mu_{\mathbf{X}(t^{\prime})}) \right ] \right ) (s-t).
	\end{equation}
	For the stochastic integral, we obtain using Burkholder--Davis--Gundy inequality and the linear growth of $\sigma$,
	\begin{equation}
		\begin{split}
			&\mathbb{E} \left [ \sup_{s^{\prime}\in [t,s]} \left | \int_t^{s^{\prime}} \sigma(X_i(t^{\prime}),\mu_{\mathbf{X}(t^{\prime})}) \mathrm{d}W(t^{\prime}) \right |^r \right ]\\
			&\leq C \left ( 1 + \mathbb{E} \left [ \sup_{s^{\prime}\in [t,s]} |X_i(s^{\prime})|^r + \sup_{s^{\prime}\in [t,s]}\mathcal{M}_r(\mu_{\mathbf{X}(s^{\prime})}) \right ]  \right ) (s-t)^{\frac{r}{2}}.
		\end{split}
	\end{equation}
	Taking the supremum over $[t,s]$ in equation \eqref{Estimate_State_Equation_02}, the expectation, the sum over $i=1,\dots,n$, and then dividing both sides by $n$, we obtain using \eqref{estimate_M_r} and \eqref{control_estimtate}
	\begin{equation}
		\begin{split}
			&\frac1n \sum_{i=1}^n \mathbb{E} \left [ \sup_{s^{\prime}\in [t,s]} |X_i(s^{\prime})-x_i|^r \right ]\\
			&\leq \frac{C}{n^{r/2}} \mathbb{E} \left [ \int_t^s |\mathbf{a}(s^{\prime})|^2 \mathrm{d}s^{\prime} \right ]^{\frac{r}{2}} + C \left ( 1 + \frac{1}{n} \sum_{i=1}^n \mathbb{E} \left [ \sup_{s^{\prime}\in [t,s]} |X_i(s^{\prime})|^r \right ] \right ) (s-t)^{\frac{r}{2}}.
		\end{split}
	\end{equation}
	Applying inequality \eqref{Gronwall} and noting that
	\begin{equation}
		\mathbb{E} \left [ \sup_{s^{\prime} \in [t,s]} |\mathbf{X}(s^{\prime}) - \mathbf{x} |_r \right ] \leq \left ( \frac1n \sum_{i=1}^n \mathbb{E} \left [ \sup_{s^{\prime}\in [t,s]} |X_i(s^{\prime})-x_i|^r \right ] \right )^{\frac{1}{r}}
	\end{equation}
	completes the proof.
	
	$(iii)$ First note that for every $s^{\prime} \in [t,s]$,
	\begin{equation}\label{partiii}
		\begin{split}
			|X^1_i(s^{\prime}) - X^0_i(s^{\prime})|^r&\leq C |x^1_i - x^0_i|^r + C \int_t^{s} | b(X^1_i(t^{\prime}),\mu_{\mathbf{X}^1(t^{\prime})}) - b(X^0_i(t^{\prime}),\mu_{\mathbf{X}^0(t^{\prime})}) |^r \mathrm{d}t^{\prime}\\
			&\quad + C \left | \int_t^{s^{\prime}} ( \sigma(X^1_i(t^{\prime}),\mu_{\mathbf{X}^1(t^{\prime})}) - \sigma(X^0_i(t^{\prime}),\mu_{\mathbf{X}^0(t^{\prime})})) \mathrm{d}W(t^{\prime}) \right |^r.
		\end{split}
	\end{equation}
	Due to the Lipschitz continuity of $b$, we have
	\begin{equation}
		\begin{split}
			&\int_t^s | b(X^1_i(t^{\prime}),\mu_{\mathbf{X}^1(t^{\prime})}) - b(X^0_i(t^{\prime}),\mu_{\mathbf{X}^0(t^{\prime})}) |^r \mathrm{d}t^{\prime}\\
			&\leq C \int_t^s \left ( | X^1_i(t^{\prime}) - X^0_i(t^{\prime})|^r + d^r_r(\mu_{\mathbf{X}^1(t^{\prime})},\mu_{\mathbf{X}^0(t^{\prime})} ) \right ) \mathrm{d}t^{\prime} \\
			&\leq C \int_t^s \left ( | X^1_i(t^{\prime}) - X^0_i(t^{\prime})|^r + \frac{1}{n} \sum_{i=1}^n |X^1_i(t^{\prime}) - X^0_i(t^{\prime})|^r \right ) \mathrm{d}t^{\prime}.
		\end{split}
	\end{equation}
	Furthermore, due to the Lipschitz continuity of $\sigma$, we have
	\begin{equation}
		\begin{split}
			&\mathbb{E} \left [ \sup_{s^{\prime}\in [t,s]} \left | \int_t^{s^{\prime}} ( \sigma(X^1_i(t^{\prime}),\mu_{\mathbf{X}^1(t^{\prime})}) - \sigma(X^0_i(t^{\prime}),\mu_{\mathbf{X}^0(t^{\prime})})) \mathrm{d}W(t^{\prime}) \right |^r \right ]\\
			&\leq C \mathbb{E} \left [ \left ( \int_t^{s} | \sigma(X^1_i(t^{\prime}),\mu_{\mathbf{X}^1(t^{\prime})}) - \sigma(X^0_i(t^{\prime}),\mu_{\mathbf{X}^0(t^{\prime})}) |^2 \mathrm{d}t^{\prime} \right )^{\frac{r}{2}} \right ]\\
			&\leq C \mathbb{E} \left [ \left ( \int_t^{s} \left ( | X^1_i(t^{\prime}) - X^0_i(t^{\prime})|^2 + d_r^2(\mu_{\mathbf{X}^1(t^{\prime})},\mu_{\mathbf{X}^0(t^{\prime})} ) \right ) \mathrm{d}t^{\prime} \right )^{\frac{r}{2}} \right ]\\
			&\leq \frac14 \mathbb{E} \left [ \sup_{t^{\prime} \in [t,s]} |X^1_i(t^{\prime}) - X^0_i(t^{\prime})|^r + \sup_{t^{\prime}\in [t,s]} d_r^r (\mu_{\mathbf{X}^1(t^{\prime})},\mu_{\mathbf{X}^0(t^{\prime})}) \right ]\\
			&\quad + C \mathbb{E} \left [ \int_t^s |X_i^1(t^{\prime}) - X_i^0(t^{\prime})|^r \mathrm{d}t^{\prime} + \int_t^s d_r^r (\mu_{\mathbf{X}^1(t^{\prime})},\mu_{\mathbf{X}^0(t^{\prime})}) \mathrm{d}t^{\prime} \right ].
		\end{split}
	\end{equation}
	Now, we note that
	\begin{equation}
		\mathbb{E} \left [ \sup_{t^{\prime}\in [t,s]} d_r^r (\mu_{\mathbf{X}^1(t^{\prime})},\mu_{\mathbf{X}^0(t^{\prime})}) \right ] \leq \frac{1}{n} \sum_{i=1}^n \mathbb{E} \left [ \sup_{t^{\prime}\in [t,s]} | X_i^1(t^{\prime}) - X_i^0(t^{\prime})|^r \right ]
	\end{equation}
	as well as
	\begin{equation}
		\mathbb{E} \left [ \int_t^s d_r^r (\mu_{\mathbf{X}^1(t^{\prime})},\mu_{\mathbf{X}^0(t^{\prime})}) \mathrm{d}t^{\prime} \right ] \leq \frac{1}{n} \sum_{i=1}^n \mathbb{E} \left [ \int_t^s | X_i^1(t^{\prime}) - X_i^0(t^{\prime})|^r \mathrm{d}t^{\prime} \right ].
	\end{equation}
	Thus, taking the supremum over $[t,s]$ in equation \eqref{partiii}, the expectation, the sum over $i=1,\dots,n$ and then dividing both sides by $n$, we obtain
	\begin{equation}
		\begin{split}
			&\frac{1}{n} \sum_{i=1}^n \mathbb{E} \left [ \sup_{s^{\prime} \in [t,s]} |X_i^1(s^{\prime}) - X^0_i(s^{\prime})|^r \right ]\\
			&\leq C \left ( |\mathbf{x}^1 - \mathbf{x}^0 |_r^r + \int_t^s \frac{1}{n} \sum_{i=1}^n \mathbb{E} \left [ \sup_{t^{\prime}\in [t,s^{\prime}]} |X_i^1(t^{\prime}) - X^0_i(t^{\prime})|^r \right ] \mathrm{d}s^{\prime} \right ).
		\end{split}
	\end{equation}
	Applying Gr\"onwall's inequality and noting that
	\begin{equation}
		\mathbb{E} \left [ \sup_{s\in [t,T]} | \mathbf{X}^1(s) - \mathbf{X}^0(s) |_r \right ] \leq \left ( \frac{1}{n} \sum_{i=1}^n \mathbb{E} \left [ \sup_{s\in [t,T]} |X_i^1(s) - X^0_i(s)|^r \right ] \right )^{\frac{1}{r}}
	\end{equation}
	yields the claim.
\end{proof}

We now prove the Lipschitz continuity of the value functions in the spatial variable.

\begin{proposition}\label{Lipschitz_Continuity_u_n}
	Let Assumptions \ref{Assumption_b_sigma_lipschitz} and \ref{Assumption_running_terminal_cost} be satisfied. Then, there is a constant $C>0$, independent of $n\in \mathbb{N}$, such that
	\begin{equation}
		|u_n(t,\mathbf{x})| \leq C (1+ |\mathbf{x}|_r ),
	\end{equation}
	and
	\begin{equation}
		|u_n(t,\mathbf{x}) - u_n(t,\mathbf{y})| \leq C | \mathbf{x} - \mathbf{y} |_r
	\end{equation}
	for all $t\in [0,T]$ and $\mathbf{x},\mathbf{y}\in (\mathbb{R}^d)^n$.
\end{proposition}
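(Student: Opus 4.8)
The plan is to read off both bounds directly from the stochastic control representation \eqref{finite_dimensional_value_function} of $u_n$ together with the a~priori estimates of Proposition~\ref{Prop_Finite_A_Priori}; since every constant there is independent of $n$, so will be the constants for $u_n$. For the \emph{upper} growth bound I would simply test with the zero control $\mathbf{a}(\cdot)\equiv 0\in\mathcal{A}^n_t$, giving $u_n(t,\mathbf{x})\le J_n(t,\mathbf{x};0)$. The Lipschitz continuity of $l_1$ and of $\mathcal{U}_T$ (on $\mathcal{P}_r(\mathbb{R}^d)$) yields the linear growth bounds $|l_1(x,\mu)|\le C(1+|x|+\mathcal{M}_r(\mu)^{1/r})$ and $|\mathcal{U}_T(\mu)|\le C(1+\mathcal{M}_r(\mu)^{1/r})$; averaging over $i$ and using the elementary identities/inequalities $\frac1n\sum_i|X_i|\le|\mathbf{X}|_r$ and $\mathcal{M}_r(\mu_{\mathbf{X}})^{1/r}=|\mathbf{X}|_r$ (valid for $r\ge 1$) turns the running and terminal costs into $C(1+|\mathbf{X}(s)|_r)$, while $\frac1n\sum_i l_2(0)=l_2(0)$ is constant. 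Taking expectations and invoking Proposition~\ref{Prop_Finite_A_Priori}(i) with $\mathbf{a}=0$ gives $u_n(t,\mathbf{x})\le C(1+|\mathbf{x}|_r)$.

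For the \emph{lower} bound I would use the coercivity of $l_2$. For an arbitrary $\mathbf{a}(\cdot)\in\mathcal{A}^n_t$, estimate \eqref{assumption_growth_l_2} gives $\frac1n\sum_i l_2(a_i(s))\ge -C_1+\frac{C_2}{n}|\mathbf{a}(s)|^2$, and the linear lower bounds for $l_1$ and $\mathcal{U}_T$ give $\ge -C(1+|\mathbf{X}(s)|_r)$ and $\ge -C(1+|\mathbf{X}(T)|_r)$, so that
\[
J_n(t,\mathbf{x};\mathbf{a})\ge \frac{C_2}{n}\|\mathbf{a}(\cdot)\|^2_{M^2(t,T;(\mathbb{R}^d)^n)} - C\bigl(1+\mathbb{E}[\textstyle\sup_{s}|\mathbf{X}(s)|_r]\bigr).
\]
Inserting Proposition~\ref{Prop_Finite_A_Priori}(i), the right-hand side is $\ge \frac{C_2}{n}\|\mathbf{a}\|^2_{M^2}-\frac{C}{\sqrt n}\|\mathbf{a}\|_{M^2}-C(1+|\mathbf{x}|_r)$; completing the square in $\|\mathbf{a}\|_{M^2}$ absorbs the linear term at the cost of the $n$-independent constant $-C^2/(4C_2)$, hence $J_n(t,\mathbf{x};\mathbf{a})\ge -C(1+|\mathbf{x}|_r)$ uniformly in $\mathbf{a}$ and $n$. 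Taking the infimum and combining with the upper bound gives $|u_n(t,\mathbf{x})|\le C(1+|\mathbf{x}|_r)$.

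For the Lipschitz estimate I would argue by comparison with a near-optimal control. Fix $\mathbf{x},\mathbf{y}$ and $\varepsilon>0$, and choose $\mathbf{a}(\cdot)\in\mathcal{A}^n_t$ on some reference probability space with $J_n(t,\mathbf{y};\mathbf{a})\le u_n(t,\mathbf{y})+\varepsilon$. Using the \emph{same} control for the initial condition $\mathbf{x}$, let $\mathbf{X},\mathbf{Y}$ be the corresponding solutions of \eqref{state_equation}; since the $l_2$-terms cancel,
\[
u_n(t,\mathbf{x})-u_n(t,\mathbf{y})\le J_n(t,\mathbf{x};\mathbf{a})-J_n(t,\mathbf{y};\mathbf{a})+\varepsilon = \mathbb{E}\!\left[\int_t^T\tfrac1n\textstyle\sum_i\bigl(l_1(X_i,\mu_{\mathbf{X}})-l_1(Y_i,\mu_{\mathbf{Y}})\bigr)\mathrm{d}s+\mathcal{U}_T(\mu_{\mathbf{X}(T)})-\mathcal{U}_T(\mu_{\mathbf{Y}(T)})\right]+\varepsilon.
\]
The Lipschitz continuity of $l_1$ and $\mathcal{U}_T$, together with the coupling estimates $d_r(\mu_{\mathbf{X}},\mu_{\mathbf{Y}})\le|\mathbf{X}-\mathbf{Y}|_r$ (match indices) and $\frac1n\sum_i|X_i-Y_i|\le|\mathbf{X}-\mathbf{Y}|_r$, bound the right-hand side by $C\,\mathbb{E}[\sup_{s}|\mathbf{X}(s)-\mathbf{Y}(s)|_r]+\varepsilon$, which by Proposition~\ref{Prop_Finite_A_Priori}(iii) is at most $C|\mathbf{x}-\mathbf{y}|_r+\varepsilon$ with $C$ independent of $n$. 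Letting $\varepsilon\downarrow 0$ and exchanging $\mathbf{x}$ and $\mathbf{y}$ gives the claim.

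The only genuinely delicate point is keeping all constants independent of $n$, which is precisely what Proposition~\ref{Prop_Finite_A_Priori} is designed to supply; beyond that, one must (a) use the normalized norm $|\cdot|_r$ and the power-mean inequality consistently so that the $\frac1n\sum_i$ averages in the costs and in the empirical-measure moments collapse to $|\mathbf{X}|_r$ (resp.\ $|\mathbf{X}-\mathbf{Y}|_r$) rather than to $n$-dependent quantities, and (b) in the lower bound, exploit that $l_2$ grows quadratically in the control while the state grows only like $\|\mathbf{a}\|_{M^2}/\sqrt n$, so completing the square neutralizes the contribution of large controls.
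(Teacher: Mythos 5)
Your proposal is correct and follows essentially the same route as the paper: the two-sided bound comes from the growth/coercivity assumptions on $l_1,l_2,\mathcal{U}_T$ combined with Proposition~\ref{Prop_Finite_A_Priori}(i), and the Lipschitz estimate from using the same (near-optimal) control for both initial conditions together with the Lipschitz continuity of $l_1,\mathcal{U}_T$ and Proposition~\ref{Prop_Finite_A_Priori}(iii). The paper states this in one line; you have merely spelled out the details (zero control for the upper bound, completing the square in $\|\mathbf{a}\|_{M^2}/\sqrt n$ for the lower bound), all of which are correct.
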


\begin{proof}
	The first estimate follows easily from the growth assumptions on $l_1,l_2,\mathcal{U}_T$ and Proposition \ref{Prop_Finite_A_Priori}(i). The second estimate follows from the assumptions about the Lipschitz continuity of $l_1,\mathcal{U}_T$ and Proposition \ref{Prop_Finite_A_Priori}(iii).
\end{proof}

Finally, let us prove an estimate regarding the uniform continuity of the value functions $u_n$ in the time variable.

\begin{proposition}\label{Lipschitz_Time_u_n}
	Let Assumptions \ref{Assumption_b_sigma_lipschitz} and \ref{Assumption_running_terminal_cost} be satisfied. Then, there is a constant $C>0$, independent of $n\in \mathbb{N}$, such that
	\begin{equation}\label{Lipschitz_Time_u_n_first_estimate}
		|u_n(s,\mathbf{x}) - u_n(t,\mathbf{x})| \leq C(1+|\mathbf{x}|_r) |s-t|^{\frac12}
	\end{equation}
	for all $s,t\in [0,T]$ and $\mathbf{x}\in (\mathbb{R}^d)^n$.
\end{proposition}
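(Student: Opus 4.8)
The plan is to assume $0\le t<s\le T$ (the case $t=s$ being trivial) and to prove separately the two one-sided estimates $u_n(t,\mathbf{x})-u_n(s,\mathbf{x})\le C(1+|\mathbf{x}|_r)(s-t)^{1/2}$ and $u_n(s,\mathbf{x})-u_n(t,\mathbf{x})\le C(1+|\mathbf{x}|_r)(s-t)^{1/2}$, in each case by transferring a near-optimal control from one problem to the other. A preliminary step needed for both directions is the following uniform bound: there is a constant $C$, independent of $n$ \emph{and} of the initial time $t_0$, such that any $\mathbf{a}(\cdot)\in\mathcal{A}^n_{t_0}$ with $J_n(t_0,\mathbf{x};\mathbf{a}(\cdot))\le u_n(t_0,\mathbf{x})+1$ satisfies $\tfrac{1}{\sqrt n}\|\mathbf{a}(\cdot)\|_{M^2(t_0,T;(\mathbb{R}^d)^n)}\le C(1+|\mathbf{x}|_r)$. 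This follows by combining the growth bound $|u_n|\le C(1+|\mathbf{x}|_r)$ from Proposition \ref{Lipschitz_Continuity_u_n}, the lower bound $\tfrac1n\sum_i l_2(a_i)\ge -C+C\,\tfrac1n|\mathbf{a}|^2$ coming from \eqref{assumption_growth_l_2}, the linear lower bounds on $l_1$ and $\mathcal{U}_T$ coming from their Lipschitz continuity with respect to $d_r$ (using also $|\mathbf{X}|_1\le|\mathbf{X}|_r$), and the a priori estimate $\mathbb{E}[\sup_{[t_0,T]}|\mathbf{X}(s)|_r]\le C(1+|\mathbf{x}|_r+\tfrac1{\sqrt n}\|\mathbf{a}\|_{M^2})$ from Proposition \ref{Prop_Finite_A_Priori}(i); together these give a quadratic inequality in $\tfrac1{\sqrt n}\|\mathbf{a}\|_{M^2}$ from which the claim is immediate.

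\textbf{Transfer of controls.}
For the estimate $u_n(t,\mathbf{x})\le u_n(s,\mathbf{x})+C(1+|\mathbf{x}|_r)(s-t)^{1/2}$: pick $\mathbf{b}(\cdot)\in\mathcal{A}^n_s$ with $J_n(s,\mathbf{x};\mathbf{b}(\cdot))\le u_n(s,\mathbf{x})+\varepsilon$, $\varepsilon\in(0,1]$, on some reference space, and define $\mathbf{a}(\cdot)\in\mathcal{A}^n_t$ by $\mathbf{a}(\cdot):=\mathbf{b}(\cdot+s-t)$ on $[t,T-(s-t)]$ and $\mathbf{a}(\cdot):=0$ on $[T-(s-t),T]$, driven by the correspondingly time-shifted Wiener process (enlarging the probability space to extend it to the full interval, which is permitted since $\mathcal{A}^n_t$ ranges over all reference probability spaces). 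By pathwise uniqueness for \eqref{state_equation}, the state $\mathbf{X}(\cdot)$ for $(\mathbf{a},\mathbf{x})$ agrees on $[t,T-(s-t)]$ with the time-shift of the state $\mathbf{Y}(\cdot)$ for $(\mathbf{b},\mathbf{x})$; hence, after a change of variables, $J_n(t,\mathbf{x};\mathbf{a}(\cdot))$ and $J_n(s,\mathbf{x};\mathbf{b}(\cdot))$ differ only by (a) the running cost over the ``wasted'' interval $[T-(s-t),T]$, which is $\le C(1+|\mathbf{x}|_r)(s-t)$ by the growth of $l_1,l_2$ and Proposition \ref{Prop_Finite_A_Priori}(i), and (b) the term $\mathbb{E}[\mathcal{U}_T(\mu_{\mathbf{X}(T)})-\mathcal{U}_T(\mu_{\mathbf{Y}(T)})]$, which by Lipschitz continuity of $\mathcal{U}_T$ with respect to $d_r$ is $\le C\,\mathbb{E}[|\mathbf{X}(T)-\mathbf{X}(T-(s-t))|_r]\le C(1+|\mathbf{x}|_r)(s-t)^{1/2}$, using Proposition \ref{Prop_Finite_A_Priori}(ii) on the wasted interval (with the moments of the state at its left endpoint controlled by Proposition \ref{Prop_Finite_A_Priori}(i), and with $\tfrac1{\sqrt n}\|\mathbf{a}\|_{M^2}=\tfrac1{\sqrt n}\|\mathbf{b}\|_{M^2}\le C(1+|\mathbf{x}|_r)$ by the preliminary step). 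Since $(s-t)\le\sqrt{T}\,(s-t)^{1/2}$, we get $u_n(t,\mathbf{x})\le J_n(t,\mathbf{x};\mathbf{a}(\cdot))\le u_n(s,\mathbf{x})+\varepsilon+C(1+|\mathbf{x}|_r)(s-t)^{1/2}$, and letting $\varepsilon\to0$ gives the claim. The reverse inequality is proved symmetrically: take an $\varepsilon$-optimal $\mathbf{a}(\cdot)\in\mathcal{A}^n_t$ for $u_n(t,\mathbf{x})$ (which by the preliminary step has bounded scaled $M^2$-norm), shift it by $t-s$ and set it to $0$ on the last portion of length $s-t$ to obtain a control in $\mathcal{A}^n_s$, and run the same comparison. (Alternatively, the first inequality also follows from the sub-optimality part of the dynamic programming principle, $u_n(t,\mathbf{x})\le\mathbb{E}\big[\int_t^s\tfrac1n\sum_i(l_1(X_i(r),\mu_{\mathbf{X}(r)})+l_2(0))\,\mathrm{d}r+u_n(s,\mathbf{X}(s))\big]$ with $\mathbf{X}$ the state under the zero control, combined with Propositions \ref{Lipschitz_Continuity_u_n} and \ref{Prop_Finite_A_Priori}(i),(ii).)

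\textbf{Main difficulty.}
Once the a priori estimates of Proposition \ref{Prop_Finite_A_Priori} and the growth/Lipschitz bounds of Proposition \ref{Lipschitz_Continuity_u_n} are available, the computation is essentially routine; the only genuinely delicate points are the bookkeeping around reference probability spaces — moving a control between the time-$t$ and time-$s$ problems requires time-shifting (and, for the shorter horizon, extending) the driving noise, and checking via pathwise uniqueness that the state process transforms accordingly — and ensuring that the constant in the preliminary control bound is uniform both in $n$ and in the starting time, since it is invoked at both $t$ and $s$.
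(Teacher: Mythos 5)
Your first inequality ($u_n(t,\mathbf{x})\le u_n(s,\mathbf{x})+C(1+|\mathbf{x}|_r)(s-t)^{1/2}$) is essentially sound: there the ``wasted'' terminal interval carries the zero control, so the displacement $\mathbb{E}[|\mathbf{X}(T)-\mathbf{X}(T-(s-t))|_r]$ is indeed $O\big((1+|\mathbf{x}|_r)(s-t)^{1/2}\big)$ by Proposition \ref{Prop_Finite_A_Priori}(ii) (modulo the routine point that you apply it with a random initial state, which needs a conditional version). The problem is the reverse inequality, which you dismiss as ``proved symmetrically''. It is not symmetric: when you transfer an $\varepsilon$-optimal control $\mathbf{a}(\cdot)\in\mathcal{A}^n_t$ to the time-$s$ problem, the discarded interval $[T-(s-t),T]$ carries the \emph{original} control, and the terminal comparison term $\mathbb{E}[\mathcal{U}_T(\mu_{\mathbf{X}(T-(s-t))})-\mathcal{U}_T(\mu_{\mathbf{X}(T)})]$ is bounded via Proposition \ref{Prop_Finite_A_Priori}(ii) only by
\[
\frac{C}{\sqrt n}\,\mathbb{E}\Big[\int_{T-(s-t)}^T|\mathbf{a}(r)|^2\,\mathrm{d}r\Big]^{1/2}+C(1+|\mathbf{x}|_r)(s-t)^{1/2}.
\]
Your preliminary step only controls the \emph{global} scaled $M^2$-norm, $\tfrac1{\sqrt n}\|\mathbf{a}\|_{M^2(t,T)}\le C(1+|\mathbf{x}|_r)$; it does not prevent the control energy from concentrating on $[T-(s-t),T]$, so the first term is only $O(1+|\mathbf{x}|_r)$, with no factor $(s-t)^{1/2}$. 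And such concentration cannot be cheaply excluded: an $\varepsilon$-optimal control may carry an $O(1)$ amount of $\tfrac1n\int|\mathbf{a}|^2$ on an arbitrarily short terminal window (comparing with the control truncated to zero there only yields, via the coercivity of $l_2$ against the Lipschitz constants of $l_1,\mathcal{U}_T$, a quadratic-versus-linear inequality whose solution is an $O(1)$ bound, not one vanishing with $s-t$). The same obstruction appears if you instead try the DPP route for this direction with arbitrary admissible controls.

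This is exactly what the paper's first step is for, and it is the ingredient your proposal is missing: quoting \cite[Proposition 3.1]{mayorga_swiech_2023} (and the discussion following it), one may restrict the infimum defining $u_n$ to controls with values in $B_{K\sqrt n}(0)$, with $K$ independent of $n$. For such controls $\tfrac1{\sqrt n}\mathbb{E}[\int_{t'}^{t'+\delta}|\mathbf{a}|^2]^{1/2}\le K\delta^{1/2}$ on \emph{every} subinterval and $\tfrac1n\sum_i|l_2(a_i)|\le C(1+K^2)$, after which the paper concludes quickly from the dynamic programming principle on $[t,s]$, Proposition \ref{Lipschitz_Continuity_u_n} and Proposition \ref{Prop_Finite_A_Priori}(i)--(ii), obtaining both directions at once. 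To repair your argument you should either import this uniform pointwise bound on (near-)optimal controls before the transfer/shift step (then your reverse direction goes through with the $(s-t)^{1/2}$ rate), or replace it by a quantitative statement that near-optimal controls cannot concentrate energy near $T$ — which is a genuine additional argument, not a consequence of your preliminary $M^2$ bound.
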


\begin{proof}
	We first note that arguing exactly as in the proof of \cite[Proposition 3.1]{mayorga_swiech_2023} as well as the discussion following it, we obtain that the functions $u_n$ are continuous and for some sufficiently large constant $K$, which is independent of $n$,
	\begin{equation}
		u_n(t,\mathbf{x}) = \inf_{\mathbf{a}(\cdot) \in \mathcal{A}_t^{n,K\sqrt{n}}} \mathbb{E} \left [ \int_t^T \left ( \frac{1}{n} \sum_{i=1}^n l_1(X_i(s^{\prime}),\mu_{\mathbf{X}(s^{\prime})}) + l_2(a_i(s^{\prime})) \right ) \mathrm{d}s^{\prime} + \mathcal{U}_T(\mu_{\mathbf{X}(T)}) \right ],
	\end{equation}
	where $\mathcal{A}^{n,K\sqrt{n}}_t = \{ \mathbf{a}(\cdot) \in \mathcal{A}_t^n : \mathbf{a}(\cdot) \text{ has values in }B_{K\sqrt{n}}(0) \}$.
	
	Without loss of generality, let $t<s$. By the dynamic programming principle, we have
	\begin{equation}
		\begin{split}
			&u_n(t,\mathbf{x})\\
			&= \inf_{\mathbf{a}(\cdot)\in \mathcal{A}^{n,K\sqrt{n}}_t} \mathbb{E} \left [ \int_t^s \left ( \frac{1}{n} \sum_{i=1}^n (l_1(X_i(s^{\prime}),\mu_{\mathbf{X}(s^{\prime})}) + l_2(a_i(s^{\prime}))) \right ) \mathrm{d}s^{\prime} + u_n(s,\mathbf{X}(s)) \right ].
		\end{split}
	\end{equation}
	Thus,
	\begin{equation}
		\begin{split}
			&|u_n(t,\mathbf{x}) - u_n(s,\mathbf{x})|\\
			&\leq \sup_{a(\cdot)\in \mathcal{A}^{n,K\sqrt{n}}_t} \mathbb{E} \Bigg [ \int_t^s \left ( \frac{1}{n} \sum_{i=1}^n \left ( \left | l_1(X_i(s^{\prime}),\mu_{\mathbf{X}(s^{\prime})}) \right | + \left | l_2(a_i(s^{\prime})) \right | \right ) \right ) \mathrm{d}s^{\prime} \\
			&\qquad\qquad\qquad\qquad\qquad\qquad\qquad\qquad\qquad\qquad + \left |u_n(s,\mathbf{X}(s)) - u_n(s,\mathbf{x}) \right | \Bigg ].
		\end{split}
	\end{equation}
	Due to the Lipschitz continuity of $l_1$, we have
	\begin{equation}
		\frac{1}{n} \sum_{i=1}^n \left | l_1(X_i(s^{\prime}),\mu_{\mathbf{X}(s^{\prime})}) \right | \leq C \left (1+|\mathbf{X}(s^{\prime})|_1 + \mathcal{M}^{\frac1r}_r(\mu_{\mathbf{X}(s^{\prime})}) \right )\leq C (1 + | \mathbf{X}(s^{\prime}) |_r ).
	\end{equation}
	Furthermore, since $\mathbf{a}(\cdot) \in \mathcal{A}^{n,K\sqrt{n}}_t$, using \eqref{assumption_growth_l_2} we obtain for some constant $C>0$
	\begin{equation}
		\frac{1}{n} \sum_{i=1}^n  \left | l_2(a_i(s^{\prime})) \right | \leq \frac{1}{n} \sum_{i=1}^n \left ( C + C|a_i(s^{\prime})|^2 \right ) \leq C \left ( 1 + \frac{1}{n} | \mathbf{a}(s^{\prime}) |^2 \right ) \leq C(1+K^2).
	\end{equation}
	By Proposition \ref{Lipschitz_Continuity_u_n}, we have
	\begin{equation}
		\left |u_n(s,\mathbf{X}(s)) - u_n(s,\mathbf{x}) \right | \leq C | \mathbf{X}(s) - \mathbf{x} |_r.
	\end{equation}
	Altogether, we obtain
	\begin{equation}
		\begin{split}
			&|u_n(t,\mathbf{x}) - u_n(s,\mathbf{x})| \\
			&\leq C \left ( 1+ \sup_{\mathbf{a}(\cdot)\in \mathcal{A}^{n,K\sqrt{n}}_t} \mathbb{E} \left [ \sup_{s^{\prime}\in[t,s]} | \mathbf{X}(s^{\prime})|_r \right ] \right ) |s-t| + \sup_{\mathbf{a}(\cdot)\in \mathcal{A}^{n,K\sqrt{n}}_t} \mathbb{E} \left [ |\mathbf{X}(s) - \mathbf{x}|_r \right ].
		\end{split}
	\end{equation}
	Applying Proposition \ref{Prop_Finite_A_Priori}(i) and (ii) concludes the proof. 
\end{proof}

\begin{remark}
	Note that we did not use the $C^{1,1}$ regularity of the coefficients to obtain this result.
\end{remark}

\subsection{Convergence of \texorpdfstring{$u_n$}{un}}

Once Propositions \ref{Lipschitz_Continuity_u_n} and \ref{Lipschitz_Time_u_n} are established, the construction of the functions $\mathcal{V}_n$ and their limit $\mathcal{V}$ follows exactly the additive noise case, see the beginning of Section 4 in \cite{mayorga_swiech_2023}. We streamline the presentation here for the convenience of the reader.

Since the functions $u_n$ are invariant under permutations of $\mathbf{x}= (x_1,\dots,x_n)\in (\mathbb{R}^d)^n$, we can define the function $\tilde{\mathcal{V}}_n(t,\mu_{\mathbf{x}}) := u_n(t,\mathbf{x})$, where $\mu_{\mathbf{x}} = \frac1n \sum_{i=1}^n \delta_{x_i}$. Each $\tilde{\mathcal{V}}_n$ is now defined on the subset $\mathcal{D}_n \subset \mathcal{P}_r(\mathbb{R}^d)$ consisting of averages of $n$ Dirac point masses. For $R>0$ and $\bar{r}\in [r,2]$, we denote $\mathfrak{M}^{\bar{r}}_{R} := \{ \mu\in \mathcal{P}_r(\mathbb{R}^d) : \int_{\mathbb{R}^d} |x|^{\bar{r}} \mu(\mathrm{d}x) \leq R \}$. By Propositions \ref{Lipschitz_Continuity_u_n} and \ref{Lipschitz_Time_u_n}, for every $R>0$, there is a constant $C_R$ such that
\begin{equation}
	| \tilde{\mathcal{V}}_n(t,\mu_{\mathbf{x}}) - \tilde{\mathcal{V}}_n(s,\mu_{\mathbf{y}}) | \leq C d_r(\mu_{\mathbf{x}},\mu_{\mathbf{y}}) + C_R |t-s|^{\frac12}
\end{equation}
for all $t,s\in [0,T]$ and $\mathbf{x},\mathbf{y} \in (\mathbb{R}^d)^n$ such that $\mu_{\mathbf{x}},\mu_{\mathbf{y}}\in \mathfrak{M}^r_R$, with $C>0$ being the constant from Proposition \ref{Lipschitz_Continuity_u_n}. In particular, there is a constant $C$ such that
\begin{equation}
	| \tilde{\mathcal{V}}_n(t,\mu_{\mathbf{x}}) | \leq C(1+|\mathbf{x}|_r).
\end{equation}
We extend $\tilde{\mathcal{V}}_n$ to a function $\mathcal{V}_n$ on $[0,T]\times \mathcal{P}_2(\mathbb{R}^d)$ by setting
\begin{equation}
	\mathcal{V}_n(t,\mu) := \sup_{\beta\in \mathcal{D}_n} \{ \tilde{\mathcal{V}}_n(t,\beta) - 2C d_r(\mu,\beta) \}.
\end{equation}
Then $\mathcal{V}_n(t,\cdot)$ has Lipschitz constant $2C$ for any $t\in [0,T]$, see \cite{mcshane_1934}. Moreover, one can show that
\begin{equation}\label{uniform_continuity_mathcal_V_n}
	|\mathcal{V}_n(t,\mu) - \mathcal{V}_n(s,\beta)| \leq 2Cd_r(\mu,\beta) + C|t-s|^{\frac12}
\end{equation}
for all $t,s\in [0,T]$ and $\mu,\beta\in \mathfrak{M}_R^r$, and in particular
\begin{equation}
	|\mathcal{V}_n(t,\mu)| \leq C \left ( 1+ \mathcal{M}^{\frac{1}{r}}_r(\mu) \right ).
\end{equation}
Thus, the sequence $\mathcal{V}_n$ is equicontinuous and bounded on bounded sets in $[0,T]\times\mathcal{P}_r(\mathbb{R}^d)$. Since the sets $\mathfrak{M}^2_R$ are relatively compact in $\mathcal{P}_r(\mathbb{R}^d)$ (see e.g. \cite[Proposition 7.1.5]{ambrosio_gigli_savare_2005}), it follows from the Arzel\`{a}--Ascoli theorem that a subsequence of $(\mathcal{V}_n)_{n\in\mathbb{N}}$, still denoted by $(\mathcal{V}_n)_{n\in\mathbb{N}}$, converges uniformly on every set $[0,T]\times \mathfrak{M}^2_R$ to a function $\mathcal{V}:[0,T]\times \mathcal{P}_2(\mathbb{R}^d) \to \mathbb{R}$ which also satisfies estimate \eqref{uniform_continuity_mathcal_V_n}.

Now, we define $V:[0,T]\times E\to \mathbb{R}$ by
\begin{equation}\label{Definition_of_V}
	V(t,X) := \mathcal{V}(t,X_{\texttt{\#}}\mathcal{L}^1).
\end{equation}
Recall that we call $\mathcal{V}$ an $L$-viscosity solution of equation \eqref{intro:HJB_on_Wasserstein_space}, if its lift $V$ is a viscosity solution of equation \eqref{intro:lifted_HJB}. Here, the notion of viscosity solution for equations on Hilbert spaces is the natural extension of the one used in finite dimensional spaces, see Appendix A.

\begin{theorem}\label{theorem:convergence}
	Let Assumptions \ref{Assumption_b_sigma_lipschitz} and \ref{Assumption_running_terminal_cost} be satisfied and let $V$ be defined by \eqref{Definition_of_V}. Then, for every bounded set $B$ in $\mathcal{P}_2(\mathbb{R}^d)$, we have
	\begin{multline}
		\lim_{n\to \infty} \sup \Bigg \{ \left | u_n(t,x_1,\dots,x_n) - \mathcal{V}\left ( t, \frac{1}{n} \sum_{i=1}^n \delta_{x_i} \right ) \right |\\
		: (t,x_1,\dots,x_n)\in (0,T]\times (\mathbb{R}^d)^n, \frac{1}{n} \sum_{i=1}^n \delta_{x_i} \in B \Bigg \} =0
	\end{multline}
	and $\mathcal{V}$ is the unique L-viscosity solution of equation \eqref{intro:HJB_on_Wasserstein_space} in the class of functions
	$\mathcal{W}$ whose lifts are uniformly continuous on bounded subsets of $[0,T]\times E$ and satisfy 
	\begin{equation}
		|W(t,X)-W(t,Y)| \leq C\|X-Y\|_E\quad\text{for all}\,\,t\in[0,T],\,\,X,Y\in E.
	\end{equation}
	Moreover, $V=U$, where $U$ is given by \eqref{Lifted_Value_Function}.
\end{theorem}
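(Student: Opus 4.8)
The plan is to deduce all three assertions from the single observation that the finite dimensional HJB equation \eqref{intro:finite_dimensional_HJB} is exactly the restriction of the lifted equation \eqref{intro:lifted_HJB} to the subspace $E_n$. Indeed, writing $v(t,\mathbf{x}):=V(t,X_n^{\mathbf{x}})$ for $V:[0,T]\times E\to\mathbb{R}$, one has $DV(t,X_n^{\mathbf{x}})|_{A_i^n}=nD_{x_i}v$, so that $\tilde H(X_n^{\mathbf{x}},\mu_{\mathbf{x}},DV)=\frac1n\sum_i H(x_i,\mu_{\mathbf{x}},nD_{x_i}v)$, while a parallel computation for the second order term, using $\Sigma(X_n^{\mathbf{x}})e'_m=\sum_i\sigma(x_i,\mu_{\mathbf{x}})e'_m\mathbf{1}_{A_i^n}$, produces $\frac12\text{Tr}(A_n D^2 v)$. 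Since $u_n$ is the unique viscosity solution of \eqref{intro:finite_dimensional_HJB}, it follows that $u_n$, regarded as a function on $E_n$, is a viscosity solution of the restriction of \eqref{intro:lifted_HJB} to $E_n$. Moreover, the McShane extension used in the construction guarantees that $\mathcal{V}_n$ agrees with $\tilde{\mathcal{V}}_n$ on $\mathcal{D}_n$, i.e.\ $\mathcal{V}_n(t,\mu_{\mathbf{x}})=u_n(t,\mathbf{x})$; hence for any subsequential uniform limit $\mathcal{V}$ of $(\mathcal{V}_n)$ on the sets $[0,T]\times\mathfrak{M}_R^2$, the claimed convergence of $u_n$ holds along that subsequence, and once $\mathcal{V}$ is identified as \emph{the} unique $L$-viscosity solution in $\mathcal{W}$, the standard subsequence argument upgrades this to convergence of the whole sequence.

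Next I would show $V\in\mathcal{W}$ and that $V$ is a viscosity solution of \eqref{intro:lifted_HJB}, i.e.\ $\mathcal{V}$ is an $L$-viscosity solution of \eqref{intro:HJB_on_Wasserstein_space}. Membership in $\mathcal{W}$ is immediate from \eqref{uniform_continuity_mathcal_V_n} together with $d_r\le d_2$ on $\mathcal{P}_2(\mathbb{R}^d)$ and $d_2(X_{\texttt{\#}}\mathcal{L}^1,Y_{\texttt{\#}}\mathcal{L}^1)\le\|X-Y\|_E$, which turn the $2C$-Lipschitz continuity of $\mathcal{V}(t,\cdot)$ for $d_r$ into $|V(t,X)-V(t,Y)|\le 2C\|X-Y\|_E$, uniformly in $t$. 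For the viscosity property, given a test function $\varphi$ touching $V$ from above at $(t_0,X_0)\in(0,T)\times E$, I would approximate $X_0$ by $X_0^{(n)}\in E_n$ with $\|X_0^{(n)}-X_0\|_E\to0$ (hence also $(X_0^{(n)})_{\texttt{\#}}\mathcal{L}^1\to(X_0)_{\texttt{\#}}\mathcal{L}^1$ in $\mathcal{P}_2$), add a small penalization to localize, obtain for large $n$ a maximum of $u_n-\varphi$ over $(0,T)\times E_n$ near $(t_0,X_0^{(n)})$, write the finite dimensional viscosity inequality there, and pass to the limit using the continuity of $b,\sigma,l_1$ and the uniform convergence $\mathcal{V}_n\to\mathcal{V}$; here it is convenient that $\Sigma(\bar X_n)e'_m\in E_n$, so no compression of the Hessian intervenes in the second order term. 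The terminal condition passes to the limit directly since $u_n(T,\mathbf{x})=\mathcal{U}_T(\mu_{\mathbf{x}})$ and $U_T$ is continuous. This is the scheme already carried out in \cite{mayorga_swiech_2023} in the additive-noise case.

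For uniqueness I would invoke a comparison principle for \eqref{intro:lifted_HJB} in the class $\mathcal{W}$: the lifted coefficients $B,\Sigma$ are Lipschitz on $E$ (as computed after Assumption \ref{Assumption_b_sigma_lipschitz}), $\tilde H$ has the structure \eqref{definition_H_tilde}, and functions in $\mathcal{W}$ are Lipschitz in $X$ uniformly in $t$, so the doubling technique for second order HJB equations in Hilbert spaces (cf.\ \cite{fabbri_gozzi_swiech_2017}, adapted as in \cite{mayorga_swiech_2023}) yields that a subsolution in $\mathcal{W}$ lies below a supersolution in $\mathcal{W}$ with larger terminal data; in particular the $L$-viscosity solution in $\mathcal{W}$ is unique. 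Finally, to prove $V=U$ it suffices to show that $U$ is an $L$-viscosity solution of \eqref{intro:HJB_on_Wasserstein_space} in $\mathcal{W}$ and conclude by uniqueness. A priori estimates for \eqref{Lifted_State_Equation}, proved exactly as in Proposition \ref{Prop_Finite_A_Priori} but directly in $E$ (using that $B,\Sigma$ are Lipschitz), together with Assumption \ref{Assumption_running_terminal_cost}, show that $U$ is Lipschitz in $X$ uniformly in $t$ and uniformly continuous on bounded sets, so $U\in\mathcal{W}$; the dynamic programming principle and Itô's formula in $E$, together with the identity \eqref{eq:equalitycosts} identifying $\tilde H$ with the Hamiltonian of the infinite dimensional problem, show that $U$ solves \eqref{intro:lifted_HJB} in the viscosity sense. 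Uniqueness in $\mathcal{W}$ then forces $V=U$.

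Compared with the additive-noise case, the new technical work is the systematic presence of the state-dependent diffusion $\Sigma$ — in the limit passage of Step 2, in the comparison principle of Step 3, and in the well-posedness and regularity estimates of Step 4 — which is precisely where the $C^{1,1}$ regularity of $\tilde\sigma$ and $\tilde b$ from Assumption \ref{Assumption_b_sigma_lipschitz} is exploited. I expect the single hardest point to be the comparison principle for the lifted second order equation \eqref{intro:lifted_HJB} with non-constant $\Sigma$: controlling the difference of the second order terms $\text{Tr}(\Sigma(X)\Sigma(X)^\ast D^2\varphi)-\text{Tr}(\Sigma(Y)\Sigma(Y)^\ast D^2\psi)$ arising in the doubling of variables requires both the Lipschitz continuity of $\Sigma$ and its additional smoothness, and it is this estimate that ultimately underlies the uniqueness in $\mathcal{W}$ and hence the identification $V=U$.
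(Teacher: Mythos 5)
Your proposal is correct and follows essentially the same route as the paper: construct $\mathcal{V}$ via the uniform Lipschitz/equicontinuity estimates and Arzel\`a--Ascoli, prove the viscosity property of the lift $V$ by composing an infinite dimensional test function with the embedding $\mathbf{x}\mapsto X^{\mathbf{x}}_n$, exploiting exactly the identity $\mathrm{Tr}(A_n D^2\varphi_n)=\sum_m\langle D^2\varphi\,\Sigma(X^{\mathbf{x}}_n)e'_m,\Sigma(X^{\mathbf{x}}_n)e'_m\rangle_E$ (possible since $\Sigma(X^{\mathbf{x}}_n)e'_m\in E_n$), then conclude by comparison in $E$ and identify $V=U$ through the value function being a solution in the class $\mathcal{W}$. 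The only real divergence is that the paper does not redo a doubling argument: comparison is taken off the shelf from \cite[Theorems 3.50, 3.54]{fabbri_gozzi_swiech_2017} (no unbounded operator, Lipschitz $B,\Sigma$ with finite dimensional noise suffice), so the step you single out as the hardest new point, and in particular the claim that the $C^{1,1}$ regularity of $\tilde b,\tilde\sigma$ is needed there, is misplaced --- that regularity is used only for the semiconcavity/semiconvexity results of Section \ref{section:C11_regularity}, while the genuinely new work for multiplicative noise in this theorem is the second order term computation and the uniform estimates of Propositions \ref{Prop_Finite_A_Priori}--\ref{Lipschitz_Time_u_n}.
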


\begin{proof}
	First, we show that the limit $\mathcal{V}$ is an L-viscosity solution of equation \eqref{intro:HJB_on_Wasserstein_space}. The proof of convergence then follows from the discussion preceding the theorem and the uniqueness of the L-viscosity solution in the above class, which can be found in\footnote{Note that in the present case, there is no unbounded operator, i.e., the operator $A$ in \cite[Theorem 3.54]{fabbri_gozzi_swiech_2017} is zero, and the operator $B$ for the strong $B$-condition there can be chosen as the identity. In fact, the strong and weak $B$-conditions are then equivalent and either \cite[Theorem 3.50]{fabbri_gozzi_swiech_2017} or \cite[Theorem 3.54]{fabbri_gozzi_swiech_2017} can be used for comparison.} \cite[Theorem 3.54]{fabbri_gozzi_swiech_2017}. This part of the argument follows exactly the proof of \cite[Theorem 4.1]{mayorga_swiech_2023}.
	
	For the proof that $\mathcal{V}$ is an L-viscosity solution of equation \eqref{intro:HJB_on_Wasserstein_space}, the only difference in the case of multiplicative noise, is the different structure of the second order term in equation \eqref{intro:lifted_HJB}. Thus, we will streamline the presentation and only highlight the additional difficulties due to the multiplicative noise.
	
	We will show that $V$ is a viscosity subsolution of \eqref{intro:lifted_HJB}; the proof that it is a supersolution is similar. To this end, let $\varphi \in C^{1,2}((0,T)\times E)$ be such that $V-\varphi$ has a strict, global maximum at $(t,X)\in (0,T)\times E$. We remind that strict maximum means that whenever $(V-\varphi)(t_n,X_n)\to (V-\varphi)(t,X)$, then $(t_n,X_n)\to (t,X)$. Let $P= D\varphi(t,X)$. We want to show that
	\begin{equation}\label{Convergence_Proof_To_Show}
		\begin{cases}
			\partial_t \varphi(t,X) + \frac12 \sum_{m=1}^{d^{\prime}} \langle D^2 \varphi(t,X) \Sigma(X) e^{\prime}_m, \Sigma(X)e^{\prime}_m \rangle_E \\
			\quad + \langle B(X), P \rangle_E + \int_{\Omega} l_1(X(\omega),X_{\texttt{\#}}\mathcal{L}^1) \mathrm{d}\omega - \int_{\Omega} l_2^{\ast}(P(\omega)) \mathrm{d}\omega \geq 0,
		\end{cases}
	\end{equation}
	where $l_2^{\ast}(p) := \sup_{q\in \mathbb{R}^d} \{ q \cdot p - l_2(q) \}$ denotes the convex conjugate of $l_2$. For $\mathbf{x}\in (\mathbb{R}^d)^n$ and $t\in (0,T)$, define $\varphi_n(t,\mathbf{x}) := \varphi(t,X^{\mathbf{x}}_n)$, where $X^{\mathbf{x}}_n := \sum_{i=1}^n x_i \mathbf{1}_{A_i^n}$. Since $\sup \{ |u_n(t,\mathbf{x}) - V(t,X^{\mathbf{x}}_n)|:t\in [0,T], \mathbf{x}\in (\mathbb{R}^d)^n \text{ such that }X^{\mathbf{x}}_n \in B_1(X) \}\to 0$ as $n\to \infty$ and the maximum of $V-\varphi$ at $(t,X)$ is strict, there must be a sequence of points $(t_n,\mathbf{x}(n))$ such that the functions $u_n- \varphi_n$ have a local maximum over $\{(s,\mathbf{x}) \in [0,T]\times (\mathbb{R}^d)^n : X^{\mathbf{x}}_n \in B_1(X) \}$ at these points and $t_n \to t$, $X_n^{\mathbf{x}(n)} \to X$.
	
	We are going to use the fact that $u_n$ is a viscosity solution of equation \eqref{intro:finite_dimensional_HJB}. Recall that $A_n(\mathbf{x},\mu_{\mathbf{x}})$ consists of $n^2$ block matrices, each of dimension $d\times d$, given by $\sigma(x_i,\mu_{\mathbf{x}}) \sigma^{\top}(x_j,\mu_{\mathbf{x}})$, $i,j=1,\dots,n$. Thus, we have
	\begin{equation}
		\begin{split}
			\text{Tr}\left ( A_n(\mathbf{x},\mu_{\mathbf{x}}) D^2 \varphi_n(t,\mathbf{x}) \right ) &= \sum_{i,j=1}^n \text{Tr} \left ( \sigma(x_i,\mu_{\mathbf{x}}) \sigma^{\top}(x_j,\mu_{\mathbf{x}}) D^2_{x_jx_i} \varphi_n(t,\mathbf{x}) \right )\\
			&= \sum_{m=1}^{d^{\prime}}\sum_{i,j=1}^n 
			(D^2_{x_jx_i} \varphi_n(t,\mathbf{x})\sigma(x_i,\mu_{\mathbf{x}})e^{\prime}_m)\cdot(\sigma(x_j,\mu_{\mathbf{x}})e^{\prime}_m)
			\\
			&= \sum_{m=1}^{d^{\prime}}\sum_{i,j=1}^n \sum_{k=1}^d
			(D^2_{x_jx_i} \varphi_n(t,\mathbf{x})\sigma(x_i,\mu_{\mathbf{x}}))_{km}\sigma_{km}(x_j,\mu_{\mathbf{x}})
			\\
			&= \sum_{m=1}^{d^{\prime}}\sum_{i,j=1}^n \sum_{k,l=1}^d
			D^2_{x_j^kx_i^l} \varphi_n(t,\mathbf{x})\sigma_{lm}(x_i,\mu_{\mathbf{x}})\sigma_{km}(x_j,\mu_{\mathbf{x}}).
		\end{split}
	\end{equation}
	For the derivatives of $\varphi_n$, we have
	\begin{equation}
		D_{x_j} \varphi_n (t,\mathbf{x}) = \int_{\Omega} D\varphi(t,X^{\mathbf{x}}_n) \mathbf{1}_{A^n_j} \mathrm{d}\omega,
	\end{equation}
	as well as
	\begin{equation}\label{Varphi_Second_Derivative}
		D^2_{x_j^kx_i^l} \varphi_n(t,\mathbf{x}) = \int_{\Omega} D^2 \varphi(t,X^{\mathbf{x}}_n) ( e_k \mathbf{1}_{A^n_j})\cdot e_l \mathbf{1}_{A^n_i} \mathrm{d}\omega,
	\end{equation}
	for $i,j=1,\dots,n$ and $k,l=1,\dots,d$, where $(e_k)_{k=1,\dots,d}$ denotes the standard basis of $\mathbb{R}^d$. Therefore,
	\begin{equation}\label{computation_second_derivative}
		\begin{split}
			&\text{Tr}\left ( A_n(\mathbf{x},\mu_{\mathbf{x}}) D^2 \varphi_n(t,\mathbf{x}) \right )\\
			&= \sum_{m=1}^{d^{\prime}}\sum_{i,j=1}^n \sum_{k,l=1}^d
			\left( \int_{\Omega} D^2 \varphi(t,X^{\mathbf{x}}_n) ( e_k \mathbf{1}_{A^n_j})\cdot e_l \mathbf{1}_{A^n_i} \mathrm{d}\omega\right)\sigma_{lm}(x_i,\mu_{\mathbf{x}})\sigma_{km}(x_j,\mu_{\mathbf{x}})
			\\
			&= \sum_{m=1}^{d^{\prime}}\sum_{i,j=1}^n
			\int_{\Omega} D^2 \varphi(t,X^{\mathbf{x}}_n) \left( \sum_{k=1}^de_k \sigma_{km}(x_j,\mu_{\mathbf{x}})\mathbf{1}_{A^n_j}\right)\cdot \left(\sum_{l=1}^de_l \sigma_{lm}(x_i,\mu_{\mathbf{x}})\mathbf{1}_{A^n_i}\right)\mathrm{d}\omega
			\\
			&= \sum_{m=1}^{d^{\prime}} \sum_{i,j=1}^n \int_{\Omega} D^2 \varphi(t,X^{\mathbf{x}}_n) ( \sigma(x_j,\mu_{\mathbf{x}}) e^{\prime}_m \mathbf{1}_{A^n_j} ) \cdot (\sigma(x_i,\mu_{\mathbf{x}}) e^{\prime}_m \mathbf{1}_{A^n_i})\mathrm{d}\omega\\
			&=\sum_{m=1}^{d^{\prime}} \int_{\Omega} D^2 \varphi(t,X^{\mathbf{x}}_n) (\sigma(X^{\mathbf{x}}_n,(X^{\mathbf{x}}_n)_{\texttt{\#}}\mathcal{L}^1) e^{\prime}_m)(\omega) \cdot (\sigma(X^{\mathbf{x}}_n(\omega),(X^{\mathbf{x}}_n)_{\texttt{\#}}\mathcal{L}^1) e^{\prime}_m) \mathrm{d}\omega\\
			&=\sum_{m=1}^{d^{\prime}} \langle D^2 \varphi(t,X^{\mathbf{x}}_n) \Sigma(X^{\mathbf{x}}_n) e^{\prime}_m, \Sigma(X^{\mathbf{x}}_n) e^{\prime}_m \rangle_E.
		\end{split}
	\end{equation}
	Thus, since $u_n - \varphi_n$ have local maxima at $(t_n,\mathbf{x}(n))$ and $u_n$ is a viscosity solution of equation \eqref{intro:finite_dimensional_HJB}, it holds
	\begin{equation}\label{varphi_n_test_function}
		\begin{cases}
			\partial_t \varphi(t_n,X^{\mathbf{x}(n)}_n) + \frac12 \sum_{m=1}^{d^{\prime}} \langle D^2 \varphi(t_n,X_n^{\mathbf{x}(n)}) \Sigma(X_n^{\mathbf{x}(n)}) e^{\prime}_m, \Sigma(X_n^{\mathbf{x}(n)})e^{\prime}_m \rangle_E \\
			\quad + \sum_{i=1}^n b(x_i(n),\mu_{\mathbf{x}(n)}) \cdot \int_{A^n_i} D \varphi(t_n,X^{\mathbf{x}(n)}_n) \mathrm{d}\omega\\
			\quad + \frac{1}{n} \sum_{i=1}^n l_1(x_i(n),\mu_{\mathbf{x}(n)}) - \frac{1}{n} \sum_{i=1}^n l_2^{\ast}\left (n \int_{A^n_i} D \varphi(t_n,X_n^{\mathbf{x}(n)}) \mathrm{d}\omega \right ) \geq 0.
		\end{cases}
	\end{equation}
	The remainder of the proof follows along the same lines as the proof of \cite[Theorem 4.1]{mayorga_swiech_2023} since it is obvious that
	\begin{equation}
		\sum_{m=1}^{d^{\prime}} \langle D^2 \varphi(t_n,X_n^{\mathbf{x}(n)}) \Sigma(X_n^{\mathbf{x}(n)}) e^{\prime}_m, \Sigma(X_n^{\mathbf{x}(n)})e^{\prime}_m \rangle_E\to \sum_{m=1}^{d^{\prime}} \langle D^2 \varphi(t,X )\Sigma(X) e^{\prime}_m, \Sigma(X)e^{\prime}_m \rangle_E
	\end{equation}
	as $n\to\infty$. The argument that $V=U$ and that $V$ is the unique viscosity solution in the given class of functions is the same as that in the proof of \cite[Theorem 4.1]{mayorga_swiech_2023} if we use the Lipschitz regularity estimate of Lemma \ref{Value_Function_Lipschitz}.
\end{proof}

\section{\texorpdfstring{$C^{1,1}$}{C^{1,1}} Regularity of the Infinite Dimensional Value Function}\label{section:C11_regularity}

In this section, we are going to prove $C^{1,1}$ regularity in the spatial variable for the value function $U$ of the lifted infinite dimensional control problem introduced in Subsection \ref{Section_Infinite_Dimensional_Problem}.

\subsection{Estimates for the Infinite Dimensional Problem}

First, we derive a priori estimates for the solution of the lifted state equation \eqref{Lifted_State_Equation}.

\begin{lemma}\label{Infinite_Dimensional_A_Priori}
	Let Assumption \ref{Assumption_b_sigma_lipschitz} be satisfied. Let $X,X_0,X_1$ be the solutions of equation \eqref{Lifted_State_Equation} with initial conditions $Y,Y_0,Y_1\in E$, and controls $a(\cdot),a_0(\cdot),a_1(\cdot)\in \mathcal{A}_t$, respectively. Then, there is a constant $C$ such that
	\begin{equation}\label{Apriori_0_Infinite_State}
		\mathbb{E} \left [ \int_{\Omega} \sup_{s\in [t,T]} | X(s,\omega) |^2 \mathrm{d}\omega \right ] \leq C \left ( 1 + \|Y\|_E^2 + \| a(\cdot) \|^2_{M^2(t,T;E)} \right ),
	\end{equation}
	for all $Y\in E$, and $a(\cdot)\in \mathcal{A}_t$. Moreover, there is a constant $C$ such that
	\begin{equation}\label{Apriori_1_Infinite_State}
		\mathbb{E} \left [ \int_{\Omega} \sup_{s\in [t,T]} | X_1(s,\omega) - X_0(s,\omega) |^2 \mathrm{d}\omega \right ] \leq C \left ( \|Y_1-Y_0\|_E^2 + \| a_1(\cdot) - a_0(\cdot) \|^2_{M^2(t,T;E)} \right ),
	\end{equation}
	for all $Y_0,Y_1\in E$, and $a_0(\cdot),a_1(\cdot)\in \mathcal{A}_t$.
\end{lemma}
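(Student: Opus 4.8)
The plan is to derive both estimates by reducing the $E$-valued equation \eqref{Lifted_State_Equation} to a family of $\mathbb{R}^d$-valued It\^o equations indexed by $\omega\in\Omega$ and then running, for each $\omega$, essentially the single-particle version of the argument in Proposition \ref{Prop_Finite_A_Priori}; the one genuinely new point is that the time-supremum must be kept inside the spatial integral $\int_\Omega(\cdot)\,\mathrm{d}\omega$.

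\textbf{Step 1 (reduction to $\omega$-wise equations).} I would first observe that, since $B$ and $\Sigma$ act pointwise in $\omega$ (the dependence on the law being a global quantity), the $E$-valued stochastic integral can be computed $\omega$-wise: the identity $\bigl(\int_t^s\Sigma(X(r))\,\mathrm{d}W(r)\bigr)(\omega)=\int_t^s\sigma(X(r,\omega),X(r)_{\texttt{\#}}\mathcal{L}^1)\,\mathrm{d}W(r)$ holds for simple integrands and passes to the limit, using that convergence in $M^2(t,T;E)$ forces a.e.-$\omega$ convergence along a subsequence and continuity of the finite-dimensional stochastic integral. Hence, writing $\mu_s:=X(s)_{\texttt{\#}}\mathcal{L}^1$, for a.e.\ $\omega$ the trajectory $X(\cdot,\omega)$ is a continuous $\mathbb{R}^d$-valued process with
\[
	X(s,\omega)=Y(\omega)+\int_t^s\bigl(-a(r,\omega)+b(X(r,\omega),\mu_r)\bigr)\mathrm{d}r+\int_t^s\sigma(X(r,\omega),\mu_r)\,\mathrm{d}W(r),
\]
and analogously for $X_0(\cdot,\omega)$ and $X_1(\cdot,\omega)$ with $\mu^0_s,\mu^1_s$.

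\textbf{Step 2 (the two estimates).} For \eqref{Apriori_0_Infinite_State} I would fix $\omega$, apply It\^o's formula to $s\mapsto|X(s,\omega)|^2$ on $[t,T']$, take $\sup_{s\in[t,T']}$ and then $\mathbb{E}$, and treat the martingale term by the Burkholder--Davis--Gundy inequality followed by Young's inequality, which splits off $\tfrac14\,\mathbb{E}\bigl[\sup_{s\in[t,T']}|X(s,\omega)|^2\bigr]$ plus a term controlled by $\mathbb{E}\int_t^{T'}|\sigma(X(r,\omega),\mu_r)|^2\mathrm{d}r$. The linear growth of $b$ and $\sigma$ from Assumption \ref{Assumption_b_sigma_lipschitz}, together with the Jensen bound $\mathcal{M}_r(\mu_s)^{1/r}=\bigl(\int_\Omega|X(s,\omega')|^r\mathrm{d}\omega'\bigr)^{1/r}\leq\|X(s)\|_E$ (valid since $r<2$ and $\mathcal{L}^1(\Omega)=1$), then bounds the remaining terms by $C\,\mathbb{E}\int_t^{T'}\bigl(1+|X(r,\omega)|^2+\|X(r)\|_E^2+|a(r,\omega)|^2\bigr)\mathrm{d}r$. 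Integrating over $\omega\in\Omega$, absorbing the $\tfrac14$-term and using $\int_\Omega\|X(r)\|_E^2\,\mathrm{d}\omega=\|X(r)\|_E^2\leq\int_\Omega\sup_{s\in[t,r]}|X(s,\omega)|^2\mathrm{d}\omega$, the quantity $\phi(T'):=\mathbb{E}\int_\Omega\sup_{s\in[t,T']}|X(s,\omega)|^2\mathrm{d}\omega$ satisfies $\phi(T')\leq C\bigl(1+\|Y\|_E^2+\|a(\cdot)\|^2_{M^2(t,T;E)}\bigr)+C\int_t^{T'}\phi(r)\,\mathrm{d}r$, and Gr\"onwall's inequality gives the claim. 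For \eqref{Apriori_1_Infinite_State} I would repeat this with $Z(\cdot,\omega):=X_1(\cdot,\omega)-X_0(\cdot,\omega)$, which solves the corresponding linear equation with initial datum $Y_1(\omega)-Y_0(\omega)$, now using the Lipschitz continuity of $b,\sigma$ and the coupling bound $d_r(\mu^1_s,\mu^0_s)\leq\bigl(\int_\Omega|Z(s,\omega')|^r\mathrm{d}\omega'\bigr)^{1/r}\leq\|X_1(s)-X_0(s)\|_E$.

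\textbf{Expected main obstacle.} The stochastic calculus is routine and parallels Proposition \ref{Prop_Finite_A_Priori}; the only point demanding care is the combination of Step 1 with the bookkeeping that keeps the time-supremum inside $\int_\Omega$ --- applying BDG $\omega$-wise and justifying the Tonelli-type interchange of $\sup$, $\int_\Omega$ and $\mathbb{E}$, for which the required integrability is supplied, as usual, by a preliminary localization with stopping times together with Fatou's lemma.
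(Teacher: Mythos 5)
Your proposal is correct and follows essentially the same route as the paper's proof: an $\omega$-wise estimate keeping the time-supremum inside $\int_\Omega$, Burkholder--Davis--Gundy applied pathwise in $\omega$, the Lipschitz/linear-growth bounds combined with the coupling inequality $d_r\le\|\cdot\|_E$, and Gr\"onwall (the paper proves \eqref{Apriori_1_Infinite_State} in detail and notes \eqref{Apriori_0_Infinite_State} is analogous, while you do the reverse). The only cosmetic difference is that the paper simply squares the integrated equation instead of applying It\^o's formula to $|X(s,\omega)|^2$, which changes nothing of substance.
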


\begin{proof} 
	We are only going to prove inequality \eqref{Apriori_1_Infinite_State}. The proof of inequality \eqref{Apriori_0_Infinite_State} follows along the same lines. For every $s^{\prime} \in [t,s]$, we have
	\begin{equation}
		\begin{split}
			| X_1(s^{\prime},\omega)-X_0(s^{\prime},\omega)|^2 &\leq C |Y_1(\omega) - Y_0(\omega)|^2 + C \int_t^s |a_1(t^{\prime},\omega) - a_0(t^{\prime},\omega) |^2 \mathrm{d}t^{\prime} \\
			&\quad + C \int_t^{s} |B(X_1(t^{\prime}))(\omega) - B(X_0(t^{\prime}))(\omega) |^2 \mathrm{d}t^{\prime}\\
			&\quad + C \left | \int_t^{s^{\prime}} \left ( \Sigma(X_1(t^{\prime}))(\omega) - \Sigma(X_0(t^{\prime}))(\omega) \right ) \mathrm{d}W(t^{\prime}) \right |^2.
		\end{split}
	\end{equation}
	Thus, taking the supremum over $s^{\prime} \in[t,s]$, the expectation, and the integral over $\Omega$, we obtain
	\begin{equation}\label{Infinite_Dimensional_A_Priori_Inequality_1}
		\begin{split}
			&\int_{\Omega} \mathbb{E} \left [ \sup_{s^{\prime} \in [t,s]} | X_1(s^{\prime},\omega)-X_0(s^{\prime},\omega)|^2 \right ] \mathrm{d}\omega\\
			&\leq C \|Y_1 - Y_0\|_E^2 + C  \| a_1(\cdot) - a_0(\cdot) \|_{M^2(t,T;E)}^2 \\
			&\quad + C \int_{\Omega} \mathbb{E} \left [ \int_t^s |B(X_1(s'))(\omega) - B(X_0(s'))(\omega) |^2 \mathrm{d}s' \right ] \mathrm{d} \omega\\
			&\quad + C \int_{\Omega} \mathbb{E} \left [ \sup_{s^{\prime} \in [t,s]} \left | \int_t^{s^{\prime}} \left ( \Sigma(X_1(t^{\prime}))(\omega) - \Sigma(X_0(t^{\prime}))(\omega) \right ) \mathrm{d}W(t^{\prime}) \right |^2 \right ] \mathrm{d}\omega.
		\end{split}
	\end{equation}
	Due to the Lipschitz continuity of $B$, we have
	\begin{equation}
		\begin{split}
			&\int_{\Omega} \mathbb{E} \left [ \int_t^s |B(X_1(s'))(\omega) - B(X_0(s'))(\omega) |^2 \mathrm{d}s' \right ] \mathrm{d} \omega \\
			&\leq C \int_{\Omega} \mathbb{E} \left [ \int_t^s \sup_{t^{\prime}\in [t,s^{\prime}]} | X_1(t^{\prime},\omega) - X_0(t^{\prime},\omega) |^2 \mathrm{d}s^{\prime} \right ] \mathrm{d}\omega.
		\end{split}
	\end{equation}
	Moreover, by Burkholder--Davis--Gundy inequality and the Lipschitz continuity of $\Sigma$, we have
	\begin{equation}
		\begin{split}
			&\int_{\Omega} \mathbb{E} \left [ \sup_{s^{\prime} \in [t,s]} \left | \int_t^{s^{\prime}} \left ( \Sigma(X_1(t^{\prime}))(\omega) - \Sigma(X_0(t^{\prime}))(\omega) \right ) \mathrm{d}W(t^{\prime}) \right |^2 \right ] \mathrm{d}\omega\\
			&\leq C \int_{\Omega} \mathbb{E} \left [ \int_t^s | \Sigma(X_1(s^{\prime}))(\omega) - \Sigma(X_0(s^{\prime}))(\omega) |^2 \mathrm{d}s^{\prime} \right ] \mathrm{d}\omega\\
			&\leq C \int_{\Omega} \mathbb{E} \left [ \int_t^s \sup_{t^{\prime} \in [t,s^{\prime}]} | X_1(t^{\prime},\omega) - X_0(t^{\prime},\omega) |^2 \mathrm{d}s^{\prime} \right ] \mathrm{d}\omega.
		\end{split}
	\end{equation}
	Thus, applying Gr\"onwall's inequality in equation \eqref{Infinite_Dimensional_A_Priori_Inequality_1} concludes the proof.
\end{proof}

For the following lemma, we are going to use the same notation as in the previous lemma. Moreover, for $\lambda\in [0,1]$, $s\in [t,T]$, we set
\begin{align}
	&a_{\lambda}(s) := \lambda a_1(s) + (1-\lambda) a_0(s), && Y_{\lambda} := \lambda Y_1 + (1-\lambda)Y_0\\
	&X^{\lambda}(s) := \lambda X_1(s) + (1-\lambda)X_0(s), && X_{\lambda}(s) := X(s;Y_{\lambda},a_{\lambda}(\cdot)),
\end{align}
where $X(s;Y_{\lambda},a_{\lambda}(\cdot))$ denotes the solution of equation \eqref{Lifted_State_Equation} at time $s$ with initial condition $Y_{\lambda}$ and control $a_{\lambda}(\cdot)$.

\begin{lemma}\label{lem:Xlambda}
	Let Assumption \ref{Assumption_b_sigma_lipschitz} be satisfied with $r=1$. Then, there is a constant $C$, such that
	\begin{equation}
		\begin{split}
			&\int_{\Omega} \mathbb{E} \left [ \sup_{s\in [t,T]} |X^{\lambda}(s,\omega) - X_{\lambda}(s,\omega) | \right ] \mathrm{d}\omega\\
			&\leq C \lambda (1-\lambda) \left ( \|Y_1-Y_0\|_E^2 + \| a_1(\cdot) - a_0(\cdot) \|_{M^2(t,T;E)}^2 \right )
		\end{split}
	\end{equation}
	for all $\lambda\in [0,1]$, $Y_0,Y_1\in E$, and $a_0(\cdot),a_1(\cdot)\in \mathcal{A}_t$.
\end{lemma}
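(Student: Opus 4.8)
The plan is to write the SDE solved by the difference $Z(s):=X^{\lambda}(s)-X_{\lambda}(s)\in E$. Both $X^{\lambda}$ and $X_{\lambda}$ start at $Y_{\lambda}$, so $Z(t)=0$, and since the control enters linearly the drift terms $-a_{\lambda}(s)$ cancel, leaving
\begin{equation}
	\mathrm{d}Z(s)=\big[R^b(s)+\mathcal{E}^b(s)\big]\,\mathrm{d}s+\big[R^\sigma(s)+\mathcal{E}^\sigma(s)\big]\,\mathrm{d}W(s),
\end{equation}
where $R^b(s):=\lambda B(X_1(s))+(1-\lambda)B(X_0(s))-B(X^{\lambda}(s))$ is a convexity defect, $\mathcal{E}^b(s):=B(X^{\lambda}(s))-B(X_{\lambda}(s))$ is a term that is Lipschitz in $Z$, and $R^\sigma,\mathcal{E}^\sigma$ are defined analogously with $\Sigma$ in place of $B$. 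I will estimate $\Psi(s):=\int_{\Omega}\mathbb{E}\big[\sup_{s'\in[t,s]}|Z(s',\omega)|\big]\,\mathrm{d}\omega$, which is finite by Lemma \ref{Infinite_Dimensional_A_Priori} together with the Cauchy--Schwarz inequality (recall $|\Omega|=1$), and which is nondecreasing in $s$.

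The crucial ingredient is the bound on the convexity defects. Writing $B(X)(\omega)=\tilde b(X(\omega),X)$ with $\tilde b\in C^{1,1}(\mathbb{R}^d\times E,\mathbb{R}^d)$, for fixed $s,\omega$ the three evaluations appearing in $R^b(s)(\omega)$ are $\tilde b$ at the points $z_1:=(X_1(s,\omega),X_1(s))$, $z_0:=(X_0(s,\omega),X_0(s))$, and $\lambda z_1+(1-\lambda)z_0=(X^{\lambda}(s,\omega),X^{\lambda}(s))$ in the Hilbert space $\mathbb{R}^d\times E$. Since each component of $\tilde b$ has a globally Lipschitz Fr\'echet derivative, the standard second order Taylor estimate gives componentwise $|\lambda f(z_1)+(1-\lambda)f(z_0)-f(\lambda z_1+(1-\lambda)z_0)|\le \frac{L}{2}\lambda(1-\lambda)\|z_1-z_0\|^2$, whence
\begin{equation}
	|R^b(s)(\omega)|\le C\lambda(1-\lambda)\big(|X_1(s,\omega)-X_0(s,\omega)|^2+\|X_1(s)-X_0(s)\|_E^2\big),
\end{equation}
and the same bound for $R^\sigma(s)(\omega)$ using $\tilde\sigma\in C^{1,1}(\mathbb{R}^d\times E,\mathbb{R}^{d\times d^{\prime}})$. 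The error terms are controlled by the global Lipschitz continuity of $b$ and $\sigma$ in the $|\cdot|\times d_1(\cdot,\cdot)$-metric --- this is where the hypothesis $r=1$ enters --- together with $d_1(X^{\lambda}(s)_{\#}\mathcal{L}^1,X_{\lambda}(s)_{\#}\mathcal{L}^1)\le\int_{\Omega}|Z(s,\omega')|\,\mathrm{d}\omega'$, giving $|\mathcal{E}^b(s)(\omega)|+|\mathcal{E}^\sigma(s)(\omega)|\le C|Z(s,\omega)|+C\int_{\Omega}|Z(s,\omega')|\,\mathrm{d}\omega'$.

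Now I bound $\Psi(s)$ by taking absolute values in the integrated form of the SDE, applying the Burkholder--Davis--Gundy inequality to the $\mathbb{R}^d$-valued stochastic integral $s'\mapsto\int_t^{s'}(R^\sigma+\mathcal{E}^\sigma)(t')(\omega)\,\mathrm{d}W(t')$ for each $\omega$, and using Tonelli. The defect contributions are dominated by $C\lambda(1-\lambda)\int_t^s\big(\int_{\Omega}\mathbb{E}[\sup_{s''\le T}|X_1(s'',\omega)-X_0(s'',\omega)|^2]\,\mathrm{d}\omega+\mathbb{E}[\sup_{s''\le T}\|X_1(s'')-X_0(s'')\|_E^2]\big)\,\mathrm{d}s'$, and by \eqref{Apriori_1_Infinite_State} (interchanging $\sup$ and $\int_{\Omega}$ in the second term) both bracketed quantities are $\le C(\|Y_1-Y_0\|_E^2+\|a_1(\cdot)-a_0(\cdot)\|_{M^2(t,T;E)}^2)$, so this whole term is $\le C\lambda(1-\lambda)(\|Y_1-Y_0\|_E^2+\|a_1(\cdot)-a_0(\cdot)\|_{M^2(t,T;E)}^2)$. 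The contribution of $\mathcal{E}^b$ is $\le C\int_t^s\Psi(s')\,\mathrm{d}s'$ (the nonlocal term in $\omega$ merely doubles the constant after integrating over $\Omega$), and the contribution of $\mathcal{E}^\sigma$, after BDG, produces terms of the form $(\int_t^s|Z(s',\omega)|^2\,\mathrm{d}s')^{1/2}$ and $(\int_t^s(\int_{\Omega}|Z(s',\omega')|\,\mathrm{d}\omega')^2\,\mathrm{d}s')^{1/2}$; each of these I dominate, via Young's inequality, by $\varepsilon\sup_{s'\le s}|Z(s',\omega)|+C_\varepsilon\int_t^s|Z(s',\omega)|\,\mathrm{d}s'$ (respectively with the $\Omega$-integral inside), and choosing $\varepsilon$ small enough absorbs the resulting $\varepsilon\Psi(s)$ into the left-hand side. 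Altogether $\Psi(s)\le C\lambda(1-\lambda)(\|Y_1-Y_0\|_E^2+\|a_1(\cdot)-a_0(\cdot)\|_{M^2(t,T;E)}^2)+C\int_t^s\Psi(s')\,\mathrm{d}s'$, and Gr\"onwall's inequality (legitimate since $\Psi$ is finite and nondecreasing) gives the assertion with $s=T$.

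\textbf{Main obstacle.} The essential difficulty is the bookkeeping: one must recognize that the $L^1$-in-$\omega$ quantity $\Psi$ is the correct object, precisely because the convexity defects are \emph{quadratic} in the spatial differences and \eqref{Apriori_1_Infinite_State} provides exactly the matching $L^1$-in-$\omega$ control --- one power of the data norm and one power of $\lambda(1-\lambda)$ --- and one must then close the Gr\"onwall loop in this norm despite the square root generated by Burkholder--Davis--Gundy and the nonlocal-in-$\omega$ terms coming from the Wasserstein distance, which is done by the Young-inequality absorption. The convexity-defect estimate itself is routine once one identifies the right point $(X_i(s,\omega),X_i(s))\in\mathbb{R}^d\times E$ to insert into the $C^{1,1}$ Taylor expansion.
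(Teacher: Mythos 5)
Your proposal is correct and follows essentially the same route as the paper's proof: the same decomposition of $X^{\lambda}-X_{\lambda}$ into a convexity defect (bounded quadratically via the $C^{1,1}$ regularity of $\tilde b,\tilde\sigma$) plus Lipschitz error terms, the same $L^1$-in-$\omega$, sup-in-time quantity, Burkholder--Davis--Gundy, the absorption of the square-root term into the left-hand side, Gr\"onwall, and Lemma \ref{Infinite_Dimensional_A_Priori} to bound the defect contributions by $C\lambda(1-\lambda)\bigl(\|Y_1-Y_0\|_E^2+\|a_1(\cdot)-a_0(\cdot)\|_{M^2(t,T;E)}^2\bigr)$. The only differences are cosmetic (generic Young parameter $\varepsilon$ instead of the explicit $\tfrac12$, and the explicit remark on finiteness of $\Psi$).
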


\begin{proof}
	Let
	\begin{equation}
		\begin{split}
			&Z_0(\theta) = X^{\lambda}(s) + \theta \lambda (X_0(s) - X_1(s)),\\
			&Z_1(\theta) = X^{\lambda}(s) + \theta (1-\lambda) (X_1(s) - X_0(s)).
		\end{split}
	\end{equation}
	First, since $\tilde{b}$ and $\tilde{\sigma}$ are $C^{1,1}$, we have for $s\in [t,T]$ and $\omega\in \Omega$,
	\begin{equation}\label{X_lambda_Estimate_B}
		\begin{split}
			&| \lambda B(X_1(s))(\omega) + (1-\lambda) B(X_0(s))(\omega) - B(X^{\lambda}(s))(\omega) | \\
			&= | \lambda \tilde{b}(X_1(s,\omega),X_1(s)) + (1-\lambda) \tilde{b}(X_0(s,\omega),X_0(s)) - \tilde{b}(X^{\lambda}(s,\omega),X^{\lambda}(s)) | \\
			&\leq \lambda(1-\lambda) \int_0^1 \Big | ( D_x \tilde{b}(Z_1(\theta,\omega), Z_1(\theta)) - D_x \tilde{b}(Z_0(\theta,\omega), Z_0(\theta)))\\
			&\qquad\qquad\qquad\qquad\qquad\qquad\qquad\qquad\qquad\qquad (X_1(s,\omega) - X_0(s,\omega)) \Big | \mathrm{d}\theta\\
			&\quad + \lambda(1-\lambda) \int_0^1 \Big | ( D_X \tilde{b}(Z_1(\theta,\omega), Z_1(\theta)) - D_X \tilde{b}(Z_0(\theta,\omega), Z_0(\theta)) )\\
			&\qquad\qquad\qquad\qquad\qquad\qquad\qquad\qquad\qquad\qquad (X_1(s,\omega) - X_0(s,\omega)) \Big |\mathrm{d}\theta\\
			&\leq C \lambda (1-\lambda) \left ( |X_1(s,\omega) - X_0(s,\omega)|^2 + \| X_1(s) - X_0(s) \|_E^2 \right )
		\end{split}
	\end{equation}
	and similarly
	\begin{equation}\label{X_lambda_Estimate_Sigma}
		\begin{split}
			&| \lambda \Sigma(X_1(s))(\omega) + (1-\lambda) \Sigma(X_0(s))(\omega) - \Sigma(X^{\lambda}(s))(\omega) | \\
			&\leq C \lambda (1-\lambda) \left ( |X_1(s,\omega) - X_0(s,\omega)|^2 + \| X_1(s) - X_0(s) \|_E^2 \right ).
		\end{split}
	\end{equation}
	Now, for every $s^{\prime} \in [t,s]$,
	\begin{equation}
		\begin{split}
			&| X_{\lambda}(s^{\prime},\omega) - X^{\lambda}(s^{\prime},\omega)|\\
			&\leq \int_t^s | \lambda B(X_1(t^{\prime}))(\omega) + (1-\lambda) B(X_0(t^{\prime}))(\omega) - B(X^{\lambda}(t^{\prime}))(\omega) | \mathrm{d}t^{\prime}\\
			&\quad + \int_t^s | B(X^{\lambda}(t^{\prime}))(\omega) - B(X_{\lambda}(t^{\prime}))(\omega) | \mathrm{d}t^{\prime}\\
			&\quad + \left | \int_t^{s^{\prime}} (\lambda \Sigma(X_1(t^{\prime}))(\omega) +(1-\lambda) \Sigma(X_0(t^{\prime}))(\omega) - \Sigma(X^{\lambda}(t^{\prime}))(\omega) ) \mathrm{d}W(t^{\prime}) \right | \\
			&\quad + \left | \int_t^{s^{\prime}} ( \Sigma(X^{\lambda}(t^{\prime}))(\omega) - \Sigma(X_{\lambda}(t^{\prime}))(\omega) ) \mathrm{d}W(t^{\prime}) \right |.
		\end{split}
	\end{equation}
	Taking the supremum over $[t,s]$, the expectation and the integral over $\Omega$, we obtain
	\begin{equation}\label{X_lambda_Estimate_gronwall}
		\begin{split}
			&\int_{\Omega} \mathbb{E} \left [ \sup_{s^{\prime}\in[t,s]} | X_{\lambda}(s^{\prime},\omega) - X^{\lambda}(s^{\prime},\omega)| \right ] \mathrm{d}\omega\\
			&\leq \int_{\Omega} \mathbb{E} \left [ \int_t^s | \lambda B(X_1(s^{\prime}))(\omega) + (1-\lambda) B(X_0(s^{\prime}))(\omega) - B(X^{\lambda}(s^{\prime}))(\omega) | \mathrm{d}s^{\prime} \right ] \mathrm{d}\omega \\
			&\quad + \int_{\Omega} \mathbb{E} \left [ \int_t^s | B(X^{\lambda}(s^{\prime}))(\omega) - B(X_{\lambda}(s^{\prime}))(\omega) | \mathrm{d}s^{\prime} \right ] \mathrm{d}\omega \\
			&\quad + \int_{\Omega} \mathbb{E} \Bigg [ \sup_{s^{\prime}\in [t,s]} \bigg | \int_t^{s^{\prime}} (\lambda \Sigma(X_1(t^{\prime}))(\omega) +(1-\lambda) \Sigma(X_0(t^{\prime}))(\omega)\\
			&\qquad\qquad\qquad\qquad\qquad\qquad\qquad\qquad\qquad\qquad - \Sigma(X^{\lambda}(t^{\prime}))(\omega) ) \mathrm{d}W(t^{\prime}) \bigg | \Bigg ] \mathrm{d}\omega \\
			&\quad + \int_{\Omega} \mathbb{E} \left [ \sup_{s^{\prime}\in [t,s]} \left | \int_t^{s^{\prime}} ( \Sigma(X^{\lambda}(t^{\prime}))(\omega) - \Sigma(X_{\lambda}(t^{\prime}))(\omega) ) \mathrm{d}W(t^{\prime}) \right | \right ] \mathrm{d}\omega.
		\end{split}
	\end{equation}
	For the first term on the right-hand side, using \eqref{X_lambda_Estimate_B}, we obtain
	\begin{equation}
		\begin{split}
			&\int_{\Omega} \mathbb{E} \left [ \int_t^s | \lambda B(X_1(s^{\prime}))(\omega) + (1-\lambda) B(X_0(s^{\prime}))(\omega) - B(X^{\lambda}(s^{\prime}))(\omega) | \mathrm{d}s^{\prime} \right ] \mathrm{d}\omega\\
			&\leq C\lambda (1-\lambda) \int_{\Omega} \mathbb{E} \left [ \int_t^s |X_1(s^{\prime},\omega) - X_0(s^{\prime},\omega)|^2 \mathrm{d}s^{\prime} \right ] \mathrm{d}\omega.
		\end{split}
	\end{equation}
	For the second term, using the Lipschitz continuity of $b$, we obtain
	\begin{equation}
		\begin{split}
			&\int_{\Omega} \mathbb{E} \left [ \int_t^s | B(X^{\lambda}(s^{\prime}))(\omega) - B(X_{\lambda}(s^{\prime}))(\omega) | \mathrm{d}s^{\prime} \right ] \mathrm{d}\omega\\
			&= \int_{\Omega} \mathbb{E} \left [ \int_t^s | b(X^{\lambda}(s^{\prime},\omega), X^{\lambda}(s^{\prime})_{\texttt{\#}}\mathcal{L}^1) - b(X_{\lambda}(s^{\prime},\omega), X_{\lambda}(s^{\prime})_{\texttt{\#}}\mathcal{L}^1) | \mathrm{d}s^{\prime} \right ] \mathrm{d}\omega\\
			&\leq C \int_{\Omega} \mathbb{E} \left [ \int_t^s \left ( |X^{\lambda}(s^{\prime},\omega) - X_{\lambda}(s^{\prime},\omega) | + d_1(X_{\lambda}(s^{\prime})_{\texttt{\#}} \mathcal{L}^1, X^{\lambda}(s^{\prime})_{\texttt{\#}} \mathcal{L}^1 ) \right ) \mathrm{d}s^{\prime} \right ] \mathrm{d}\omega\\
			&\leq 2 C \int_{\Omega} \mathbb{E} \left [ \int_t^s |X^{\lambda}(s^{\prime},\omega) - X_{\lambda}(s^{\prime},\omega) | \mathrm{d}s^{\prime} \right ] \mathrm{d}\omega,
		\end{split}
	\end{equation}
	where in the last step, we used the fact that
	\begin{equation}\label{estimtate_d1}
		d_1(X_{\lambda}(s^{\prime})_{\texttt{\#}} \mathcal{L}^1, X^{\lambda}(s^{\prime})_{\texttt{\#}} \mathcal{L}^1 ) \leq \int_{\Omega} |X^{\lambda}(s^{\prime},\omega) - X_{\lambda}(s^{\prime},\omega) | \mathrm{d}\omega.
	\end{equation}
	For the third term, using Burkholder--Davis--Gundy inequality and \eqref{X_lambda_Estimate_Sigma}, we obtain
	\begin{equation}\label{Third_Term}
		\begin{split}
			&\int_{\Omega} \mathbb{E} \left [ \sup_{s^{\prime} \in [t,s]} \left | \int_t^{s^{\prime}} (\lambda \Sigma(X_1(t^{\prime}))(\omega) +(1-\lambda) \Sigma(X_0(t^{\prime}))(\omega) - \Sigma(X^{\lambda}(t^{\prime}))(\omega) ) \mathrm{d}W(t^{\prime}) \right | \right ] \mathrm{d}\omega\\
			&\leq \int_{\Omega} \mathbb{E} \left [ \left ( \int_t^s \left | \lambda \Sigma(X_1(s^{\prime}))(\omega) +(1-\lambda) \Sigma(X_0(s^{\prime}))(\omega) - \Sigma(X^{\lambda}(s^{\prime}))(\omega) \right |^2 \mathrm{d}s^{\prime} \right )^{\frac12} \right ] \mathrm{d}\omega\\
			&\leq C \lambda (1-\lambda) \int_{\Omega} \mathbb{E} \left [ \left ( \int_t^s \left ( |X_1(s^{\prime},\omega) - X_0(s^{\prime},\omega)|^4 + \| X_1(s^{\prime}) - X_0(s^{\prime}) \|_E^4 \right ) \mathrm{d}s^{\prime} \right )^{\frac12} \right ] \mathrm{d}\omega\\
			&\leq C \lambda (1-\lambda) \int_{\Omega} \mathbb{E} \left [ \left ( \int_t^s |X_1(s^{\prime},\omega) - X_0(s^{\prime},\omega)|^4 \mathrm{d}s^{\prime} \right )^{\frac12} \right ] \mathrm{d}\omega\\
			&\quad+ C \lambda (1-\lambda) \mathbb{E} \left [ \left ( \int_t^s \| X_1(s^{\prime}) - X_0(s^{\prime}) \|_E^4 \mathrm{d}s^{\prime} \right )^{\frac12} \right ].
		\end{split}
	\end{equation}
	Noting that
	\begin{equation}
		\begin{split}
			&\mathbb{E} \left [ \left ( \int_t^s \| X_1(s^{\prime}) - X_0(s^{\prime}) \|_E^4 \mathrm{d}s^{\prime} \right )^{\frac12} \right ]\\
			&= \mathbb{E} \left [ \left ( \int_t^s \left ( \int_{\Omega} |X_1(s^{\prime},\omega) - X_0(s^{\prime},\omega)|^2 \mathrm{d}\omega \right )^2 \mathrm{d}s^{\prime} \right )^{\frac12} \right ]\\
			&\leq C\mathbb{E} \left [ \sup_{s^{\prime} \in [t,s]} \int_{\Omega} | X_1(s^{\prime},\omega) - X_0(s^{\prime},\omega) |^2 \mathrm{d}\omega \right ]\\
			&\leq C\int_{\Omega} \mathbb{E} \left [ \sup_{s^{\prime} \in [t,s]} |X_1(s^{\prime},\omega) - X_0(s^{\prime},\omega) |^2 \right ] \mathrm{d}\omega,
		\end{split}
	\end{equation}
	we obtain from \eqref{Third_Term}
	\begin{equation}
		\begin{split}
			&\int_{\Omega} \mathbb{E} \left [ \sup_{s^{\prime} \in [t,s]} \left | \int_t^{s^{\prime}} (\lambda \Sigma(X_1(t^{\prime}))(\omega) +(1-\lambda) \Sigma(X_0(t^{\prime}))(\omega) - \Sigma(X^{\lambda}(t^{\prime}))(\omega) ) \mathrm{d}W(t^{\prime}) \right | \right ] \mathrm{d}\omega\\
			&\leq 2 C \lambda (1-\lambda) \int_{\Omega} \mathbb{E} \left [ \sup_{s^{\prime} \in [t,s]} |X_1(s^{\prime},\omega) - X_0(s^{\prime},\omega)|^2 \right ] \mathrm{d}\omega.
		\end{split}
	\end{equation}
	Finally, for the fourth term, due to the Lipschitz continuity of $\sigma$, we have
	\begin{equation}
		\begin{split}
			&\int_{\Omega} \mathbb{E} \left [ \sup_{s^{\prime} \in [t,s]} \left | \int_t^{s^{\prime}} ( \Sigma(X^{\lambda}(t^{\prime}))(\omega) - \Sigma(X_{\lambda}(t^{\prime}))(\omega) ) \mathrm{d}W(t^{\prime}) \right | \right ] \mathrm{d}\omega\\
			&\leq \int_{\Omega} \mathbb{E} \left [  \left ( \int_t^s | \Sigma(X^{\lambda}(s^{\prime}))(\omega) - \Sigma(X_{\lambda}(s^{\prime}))(\omega) |^2 \mathrm{d}s^{\prime} \right )^{\frac12} \right ] \mathrm{d}\omega\\
			&= \int_{\Omega} \mathbb{E} \left [ \left ( \int_t^s | \sigma(X^{\lambda}(s^{\prime},\omega), X^{\lambda}(s^{\prime})_{\texttt{\#}}\mathcal{L}^1) - \sigma(X_{\lambda}(s^{\prime},\omega), X_{\lambda}(s^{\prime})_{\texttt{\#}}\mathcal{L}^1) |^2 \mathrm{d}s^{\prime} \right )^{\frac12} \right ] \mathrm{d}\omega\\
			&\leq C \int_{\Omega} \mathbb{E} \left [ \left ( \int_t^s \left ( |X^{\lambda}(s^{\prime},\omega) - X_{\lambda}(s^{\prime},\omega) |^2 + d^2_1(X_{\lambda}(s^{\prime})_{\texttt{\#}} \mathcal{L}^1, X^{\lambda}(s^{\prime})_{\texttt{\#}} \mathcal{L}^1 ) \right ) \mathrm{d}s^{\prime} \right )^{\frac12} \right ] \mathrm{d}\omega\\
			&\leq \frac12 \int_{\Omega} \mathbb{E} \left [ \sup_{s^{\prime} \in [t,s]} |X^{\lambda}(s^{\prime},\omega) - X_{\lambda}(s^{\prime},\omega)| \right ] \mathrm{d}\omega\\
			&\quad + C \int_{\Omega} \mathbb{E} \left [ \int_t^s |X^{\lambda}(s^{\prime},\omega) - X_{\lambda}(s^{\prime},\omega)| \mathrm{d}s^{\prime} \right ] \mathrm{d}\omega,
		\end{split}
	\end{equation}
	where in the last step we used \eqref{estimtate_d1} again. Combining these estimates with inequality \eqref{X_lambda_Estimate_gronwall}, applying Gr\"onwall's inequality, and using Lemma \ref{Infinite_Dimensional_A_Priori} concludes the proof. 
\end{proof}

\subsection{\texorpdfstring{$C^{1,1}$}{C^{1,1}} Regularity of the Value Function}\label{subsection:C11_regularity}

\begin{proposition}[Lipschitz Continuity]\label{Value_Function_Lipschitz}
	Let Assumptions \ref{Assumption_b_sigma_lipschitz} and \ref{Assumption_running_terminal_cost} be satisfied. Then, there is a constant $C$, depending only on the Lipschitz constants of $b$, $\sigma$, $l_1$ and $\mathcal{U}_T$ as well as on $T$, such that
	\begin{equation}
		| U(t,X) - U(t,Y) | \leq C \|X-Y\|_E
	\end{equation}
	for all $t\in [0,T]$ and $X,Y\in E$.
\end{proposition}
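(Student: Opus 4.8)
The plan is to run the classical comparison-of-costs argument for stochastic control problems, carried out directly in the Hilbert space $E$, exactly as in the proof of Proposition~\ref{Lipschitz_Continuity_u_n} but now using the a priori bounds of Lemma~\ref{Infinite_Dimensional_A_Priori}. Fix $t\in[0,T]$, $X,Y\in E$ and $\varepsilon>0$. First I would select an admissible control $a(\cdot)\in\mathcal{A}_t$, together with its reference probability space $\tau=(\Omega',\mathcal F,\mathcal F^t_s,\mathbb P,W)$, which is $\varepsilon$-optimal for the initial datum $Y$, i.e. $J(t,Y;a(\cdot))\le U(t,Y)+\varepsilon$. On the \emph{same} $\tau$ and with the \emph{same} control $a(\cdot)$, let $X_1(\cdot)$ and $X_0(\cdot)$ denote the solutions of \eqref{Lifted_State_Equation} with initial conditions $X$ and $Y$ respectively. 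Since $U(t,X)\le J(t,X;a(\cdot))$ and the $L_2(a(s))$-terms are identical along the two trajectories, one obtains
\begin{equation*}
U(t,X)-U(t,Y)\le \mathbb{E}\left[\int_t^T\big(L_1(X_1(s))-L_1(X_0(s))\big)\,\mathrm{d}s+U_T(X_1(T))-U_T(X_0(T))\right]+\varepsilon.
\end{equation*}

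Next I would estimate the integrand and the terminal term pointwise. Using $L_1(Z)=\int_\Omega l_1(Z(\omega),Z_{\texttt{\#}}\mathcal{L}^1)\,\mathrm{d}\omega$ and the Lipschitz continuity of $l_1$ with respect to the $|\cdot|\times d_r(\cdot,\cdot)$-distance (Assumption~\ref{Assumption_running_terminal_cost}(i)), together with the elementary facts $\int_\Omega|Z(\omega)-Z'(\omega)|\,\mathrm{d}\omega\le\|Z-Z'\|_E$ (Cauchy--Schwarz, since $\Omega=(0,1)$ has unit mass) and $d_r(Z_{\texttt{\#}}\mathcal{L}^1,Z'_{\texttt{\#}}\mathcal{L}^1)\le\|Z-Z'\|_{L^r(\Omega;\mathbb{R}^d)}\le\|Z-Z'\|_E$ (the second equality in \eqref{definition_wasserstein_distance} applied to the coupling induced by $(Z,Z')$, plus $r\le2$), I get $|L_1(X_1(s))-L_1(X_0(s))|\le C\|X_1(s)-X_0(s)\|_E$. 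The same two facts and the Lipschitz continuity of $\mathcal{U}_T$ on $\mathcal{P}_r(\mathbb{R}^d)$ (Assumption~\ref{Assumption_running_terminal_cost}(iii)) give $|U_T(X_1(T))-U_T(X_0(T))|\le C\|X_1(T)-X_0(T)\|_E$. Substituting and using $T-t\le T$,
\begin{align*}
U(t,X)-U(t,Y)&\le C\,\mathbb{E}\left[\int_t^T\|X_1(s)-X_0(s)\|_E\,\mathrm{d}s+\|X_1(T)-X_0(T)\|_E\right]+\varepsilon\\
&\le C(1+T)\,\mathbb{E}\left[\sup_{s\in[t,T]}\|X_1(s)-X_0(s)\|_E\right]+\varepsilon.
\end{align*}
Then, by Tonelli's theorem and Jensen's inequality, followed by inequality \eqref{Apriori_1_Infinite_State} of Lemma~\ref{Infinite_Dimensional_A_Priori} with $a_1(\cdot)=a_0(\cdot)=a(\cdot)$,
\begin{equation*}
\mathbb{E}\left[\sup_{s\in[t,T]}\|X_1(s)-X_0(s)\|_E\right]\le\left(\mathbb{E}\left[\int_\Omega\sup_{s\in[t,T]}|X_1(s,\omega)-X_0(s,\omega)|^2\,\mathrm{d}\omega\right]\right)^{1/2}\le C\|X-Y\|_E.
\end{equation*}

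Finally, letting $\varepsilon\downarrow0$ yields $U(t,X)-U(t,Y)\le C\|X-Y\|_E$, and interchanging the roles of $X$ and $Y$ gives the stated bound, with $C$ depending only on $T$ and the Lipschitz constants of $b,\sigma,l_1,\mathcal{U}_T$. There is no substantial obstacle in this argument; the only points that need minor care are: choosing a \emph{common} reference probability space and control for the two initial data (legitimate because an $\varepsilon$-optimal control for $Y$ can be used as an admissible control for $X$ on the same space); the measurability step that lets one move $\sup_{s}$ inside the $\omega$-integral (handled by Tonelli, using continuity of trajectories of the solution of \eqref{Lifted_State_Equation}); and verifying that all constants are indeed independent of any $C^{1,1}$ data, since only the Lipschitz hypotheses are invoked.
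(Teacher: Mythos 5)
Your proposal is correct and is essentially the same argument the paper intends: the paper's proof is a one-line sketch invoking exactly the Lipschitz continuity of $l_1,\mathcal{U}_T$ (hence of $L_1,U_T$ in $E$) and the stability estimate \eqref{Apriori_1_Infinite_State} of Lemma \ref{Infinite_Dimensional_A_Priori}, applied with the same ($\varepsilon$-optimal) control for both initial conditions. Your filled-in details (passing from $d_r$ to $\|\cdot\|_E$, Jensen plus $\sup_s\int_\Omega\le\int_\Omega\sup_s$, and symmetry in $X,Y$) are all sound.
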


\begin{proof}
	The proof follows from the Lipschitz continuity of the coefficients of the cost functional as well as Lemma \ref{Infinite_Dimensional_A_Priori}.
\end{proof}

\begin{proposition}[Semiconcavity: Case 1]\label{Value_Function_Semiconcave_1}
	Let Assumptions \ref{Assumption_b_sigma_lipschitz} and \ref{Assumption_running_terminal_cost} be satisfied with $r=1$. Then, for every $t\in [0,T]$, $U(t,\cdot)$ is semiconcave, i.e., there is a constant $C\geq 0$ such that
	\begin{equation}
		\lambda U(t,X) + (1-\lambda) U(t,Y) - U(t,\lambda X + (1-\lambda) Y) \leq C \lambda (1-\lambda) \| X- Y \|_E^2
	\end{equation}
	for all $\lambda \in [0,1]$ and $X,Y\in E$. Moreover, the semiconcavity constant $C$ is independent of $t\in [0,T]$.
\end{proposition}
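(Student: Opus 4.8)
The plan is to use the standard stochastic--control argument for semiconcavity. Fix $X,Y\in E$ and $\lambda\in[0,1]$, write $Z:=\lambda X+(1-\lambda)Y$, and for $\varepsilon>0$ pick a control $\bar a(\cdot)\in\mathcal{A}_t$ that is $\varepsilon$-optimal for the initial datum $Z$, i.e.\ $J(t,Z;\bar a(\cdot))\le U(t,Z)+\varepsilon$. Since $\bar a(\cdot)$ is admissible for any initial condition, $U(t,X)\le J(t,X;\bar a(\cdot))$ and $U(t,Y)\le J(t,Y;\bar a(\cdot))$, whence
\[
\lambda U(t,X)+(1-\lambda)U(t,Y)-U(t,Z)\le \lambda J(t,X;\bar a(\cdot))+(1-\lambda)J(t,Y;\bar a(\cdot))-J(t,Z;\bar a(\cdot))+\varepsilon.
\]
Let $X_1,X_0,X_\lambda$ be the solutions of \eqref{Lifted_State_Equation} driven by the \emph{same} control $\bar a(\cdot)$ (and the same Wiener process) with initial data $X,Y,Z$, respectively, and set $X^\lambda:=\lambda X_1+(1-\lambda)X_0$. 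The key structural observation is that, since one control is used in all three cost functionals, the running control cost $\mathbb{E}\int_t^T L_2(\bar a(s))\,\mathrm{d}s$ enters with total coefficient $\lambda+(1-\lambda)-1=0$ and therefore cancels; in particular the convexity hypothesis on $l_2$ from Assumption \ref{Assumption_running_terminal_cost}(ii) is not needed here, and only the regularity of $l_1$, $\mathcal{U}_T$ and of the state dynamics enters.

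It remains to estimate
\[
\mathbb{E}\Big[\int_t^T\big(\lambda L_1(X_1(s))+(1-\lambda)L_1(X_0(s))-L_1(X_\lambda(s))\big)\,\mathrm{d}s+\lambda U_T(X_1(T))+(1-\lambda)U_T(X_0(T))-U_T(X_\lambda(T))\Big],
\]
which I would split, for each $s$, as $\big(\lambda L_1(X_1(s))+(1-\lambda)L_1(X_0(s))-L_1(X^\lambda(s))\big)+\big(L_1(X^\lambda(s))-L_1(X_\lambda(s))\big)$, and analogously at $T$ for $U_T$. For the first (curvature) term, note that $\tilde{l}_1\in C^{1,1}(\mathbb{R}^d\times E)$ by Assumption \ref{Assumption_running_terminal_cost}(i) is jointly semiconcave; applying this at the points $(X_1(s,\omega),X_1(s))$ and $(X_0(s,\omega),X_0(s))$ (whose convex combination is exactly the argument of $\tilde{l}_1$ in $L_1(X^\lambda(s))$, pointwise in $\omega$ as well) and integrating over $\omega\in\Omega$ gives $\lambda L_1(X_1(s))+(1-\lambda)L_1(X_0(s))-L_1(X^\lambda(s))\le C\lambda(1-\lambda)\|X_1(s)-X_0(s)\|_E^2$, and likewise $\lambda U_T(X_1(T))+(1-\lambda)U_T(X_0(T))-U_T(X^\lambda(T))\le C\lambda(1-\lambda)\|X_1(T)-X_0(T)\|_E^2$ since $U_T\in C^{1,1}(E)$. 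Taking expectations and invoking Lemma \ref{Infinite_Dimensional_A_Priori} with $a_1(\cdot)=a_0(\cdot)=\bar a(\cdot)$ (so the control term drops) turns these into $C\lambda(1-\lambda)\|X-Y\|_E^2$. For the second (dynamics-nonlinearity) term I would use the Lipschitz continuity of $l_1$ and $\mathcal{U}_T$ with respect to the $|\cdot|\times d_1(\cdot,\cdot)$-distance (Assumptions \ref{Assumption_b_sigma_lipschitz} and \ref{Assumption_running_terminal_cost} with $r=1$) together with the elementary bound $d_1\big((X^\lambda(s))_{\texttt{\#}}\mathcal{L}^1,(X_\lambda(s))_{\texttt{\#}}\mathcal{L}^1\big)\le\int_\Omega|X^\lambda(s,\omega)-X_\lambda(s,\omega)|\,\mathrm{d}\omega$ to bound $|L_1(X^\lambda(s))-L_1(X_\lambda(s))|$ and $|U_T(X^\lambda(T))-U_T(X_\lambda(T))|$ by $C\int_\Omega\sup_{s'\in[t,T]}|X^\lambda(s',\omega)-X_\lambda(s',\omega)|\,\mathrm{d}\omega$; Lemma \ref{lem:Xlambda} (again with equal controls, so its $\|a_1(\cdot)-a_0(\cdot)\|_{M^2}^2$ term vanishes) then bounds the expectation of this by $C\lambda(1-\lambda)\|X-Y\|_E^2$. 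Collecting everything and letting $\varepsilon\to0$ gives the claimed inequality; since all constants originate only from $T$, the Lipschitz constants of $b,\sigma,l_1,\mathcal{U}_T$, and the $C^{1,1}$ constants of $\tilde{b},\tilde{\sigma},\tilde{l}_1,U_T$, the semiconcavity constant does not depend on $t$.

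The main delicate point is the mismatch of norms. The curvature estimates naturally produce $\|X_1(s)-X_0(s)\|_E^2$ — an $L^2$-in-$\omega$ quantity — which Lemma \ref{Infinite_Dimensional_A_Priori} controls by $\|X-Y\|_E^2$; but the deviation of the trajectory convex combination $X^\lambda$ from the true trajectory $X_\lambda$ with data $Z$ is only controlled, via Lemma \ref{lem:Xlambda}, in the weaker $L^1$-in-$\omega$ sense. This forces the estimates of $L_1(X^\lambda)-L_1(X_\lambda)$ and $U_T(X^\lambda)-U_T(X_\lambda)$ to be kept in $L^1(\Omega)$, which is precisely why one must work with the $d_1$-Lipschitz continuity of $l_1,\mathcal{U}_T$ (the case $r=1$), and the quadratic $\lambda(1-\lambda)$-gain in Lemma \ref{lem:Xlambda} is what ultimately produces the semiconcavity modulus. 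Everything else is routine, modeled on the additive-noise argument of \cite{mayorga_swiech_2023}.
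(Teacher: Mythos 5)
Your proposal is correct and follows essentially the same route as the paper, which simply defers to the additive-noise argument of \cite[Proposition 6.1(ii)]{mayorga_swiech_2023} combined with Lemmas \ref{Infinite_Dimensional_A_Priori} and \ref{lem:Xlambda}: an $\varepsilon$-optimal control for the midpoint, cancellation of the $L_2$-cost, the $C^{1,1}$ (hence semiconcavity) bound for $\tilde l_1$ and $U_T$ on the convex combination of trajectories, and the $d_1$-Lipschitz estimate of the gap $X^\lambda-X_\lambda$ via Lemma \ref{lem:Xlambda}, which is exactly why $r=1$ is assumed. Your remarks on the norm mismatch ($L^2$ for the curvature term, $L^1$ for the trajectory gap) and on the convexity of $l_2$ being unnecessary here are accurate.
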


\begin{proof}
	The proof uses the same techniques as the proof of semiconcavity in the case of additive noise, see \cite[Proposition 6.1(ii)]{mayorga_swiech_2023}, now using Lemmas \ref{Infinite_Dimensional_A_Priori} and \ref{lem:Xlambda}.
\end{proof}

\begin{proposition}[Semiconcavity: Case 2]\label{Value_Function_Semiconcave_2}
	Let Assumptions \ref{Assumption_running_terminal_cost} and \ref{Assumption_Linear_State_Equation} be satisfied. Then, for every $t\in [0,T]$, $U(t,\cdot)$ is semiconcave
	with the semiconcavity constant independent of $t\in [0,T]$.
\end{proposition}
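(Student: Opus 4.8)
The plan is to follow the strategy of the proof of Proposition~\ref{Value_Function_Semiconcave_1}, the point being that the affine linearity of the state equation makes the argument considerably simpler: the correction term estimated in Lemma~\ref{lem:Xlambda} now vanishes identically. Fix $t\in[0,T]$, $X,Y\in E$ and $\lambda\in[0,1]$, and set $Y_\lambda:=\lambda X+(1-\lambda)Y$. Given $\varepsilon>0$, choose a control $a^\lambda(\cdot)\in\mathcal{A}_t$ with $J(t,Y_\lambda;a^\lambda(\cdot))\le U(t,Y_\lambda)+\varepsilon$, and let $X^1(\cdot)$, $X^0(\cdot)$ and $X^\lambda(\cdot)$ be the solutions of \eqref{Lifted_State_Equation} driven by the \emph{same} control $a^\lambda(\cdot)$ with initial data $X$, $Y$ and $Y_\lambda$, respectively. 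Since $B$ and $\Sigma$ are affine, the process $\lambda X^1(\cdot)+(1-\lambda)X^0(\cdot)$ solves \eqref{Lifted_State_Equation} with control $a^\lambda(\cdot)$ and initial datum $Y_\lambda$, so by uniqueness $X^\lambda(s)=\lambda X^1(s)+(1-\lambda)X^0(s)$ for every $s\in[t,T]$, almost surely.

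Using $a^\lambda(\cdot)$ as an admissible control for the problems started at $X$ and at $Y$ gives $U(t,X)\le J(t,X;a^\lambda(\cdot))$ and $U(t,Y)\le J(t,Y;a^\lambda(\cdot))$, hence
\begin{equation*}
	\lambda U(t,X)+(1-\lambda)U(t,Y)-U(t,Y_\lambda)\le \lambda J(t,X;a^\lambda(\cdot))+(1-\lambda)J(t,Y;a^\lambda(\cdot))-J(t,Y_\lambda;a^\lambda(\cdot))+\varepsilon.
\end{equation*}
Since the control is the same in the three cost functionals, the running cost $L_2(a^\lambda(s))$ cancels, and the right-hand side equals $\varepsilon$ plus
\begin{equation*}
	\mathbb{E}\Big[\int_t^T\big(\lambda L_1(X^1(s))+(1-\lambda)L_1(X^0(s))-L_1(X^\lambda(s))\big)\mathrm{d}s\Big]+\mathbb{E}\big[\lambda U_T(X^1(T))+(1-\lambda)U_T(X^0(T))-U_T(X^\lambda(T))\big].
\end{equation*}

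Because $\tilde l_1\in C^{1,1}(\mathbb{R}^d\times E)$ and $U_T\in C^{1,1}(E)$, both functions are semiconcave on the respective Hilbert spaces. Applying the semiconcavity inequality for $\tilde l_1$ at the pairs $(X^1(s,\omega),X^1(s))$ and $(X^0(s,\omega),X^0(s))$, integrating over $\omega\in\Omega$, and using $X^\lambda(s)=\lambda X^1(s)+(1-\lambda)X^0(s)$ together with $\int_\Omega|X^1(s,\omega)-X^0(s,\omega)|^2\mathrm{d}\omega=\|X^1(s)-X^0(s)\|_E^2$, we obtain
\begin{equation*}
	\lambda L_1(X^1(s))+(1-\lambda)L_1(X^0(s))-L_1(X^\lambda(s))\le C\lambda(1-\lambda)\|X^1(s)-X^0(s)\|_E^2,
\end{equation*}
and likewise $\lambda U_T(X^1(T))+(1-\lambda)U_T(X^0(T))-U_T(X^\lambda(T))\le C\lambda(1-\lambda)\|X^1(T)-X^0(T)\|_E^2$. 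To control $\mathbb{E}[\sup_{s\in[t,T]}\|X^1(s)-X^0(s)\|_E^2]$, note that an affine continuous operator is Lipschitz, so the proof of Lemma~\ref{Infinite_Dimensional_A_Priori} applies here as well, and with $a_1(\cdot)=a_0(\cdot)=a^\lambda(\cdot)$ estimate \eqref{Apriori_1_Infinite_State} gives $\mathbb{E}[\int_\Omega\sup_{s\in[t,T]}|X^1(s,\omega)-X^0(s,\omega)|^2\mathrm{d}\omega]\le C\|X-Y\|_E^2$, whence $\mathbb{E}[\sup_{s\in[t,T]}\|X^1(s)-X^0(s)\|_E^2]\le C\|X-Y\|_E^2$. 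Combining the above and letting $\varepsilon\downarrow0$ yields $\lambda U(t,X)+(1-\lambda)U(t,Y)-U(t,Y_\lambda)\le C\lambda(1-\lambda)\|X-Y\|_E^2$, with $C$ depending only on the Lipschitz constants of $D\tilde l_1$ and $DU_T$, on $T$, and on the operator norms in Assumption~\ref{Assumption_Linear_State_Equation}(i), hence independent of $t$.

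I do not expect a serious obstacle here: the linear structure removes precisely the part of the Case~1 argument that required the $C^{1,1}$ regularity of $\tilde b$ and $\tilde\sigma$, namely the estimate of $X^\lambda-X_\lambda$ in Lemma~\ref{lem:Xlambda}. The only points deserving a word of care are (i) that the a priori estimates of Lemma~\ref{Infinite_Dimensional_A_Priori}, although stated under Assumption~\ref{Assumption_b_sigma_lipschitz}, use only the Lipschitz continuity of the lifted coefficients, which is guaranteed here by affine linearity and continuity, and (ii) that $U$ is finite-valued, which follows exactly as in the proof of the Lipschitz estimate of Proposition~\ref{Value_Function_Lipschitz}. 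Note also that the convexity hypotheses in Assumption~\ref{Assumption_Linear_State_Equation}(ii) do not enter the semiconcavity estimate; they are used in the companion semiconvexity (indeed, convexity) statement for $U(t,\cdot)$.
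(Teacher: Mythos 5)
Your proposal is correct and follows essentially the same route as the paper: the paper's proof is a reference to the additive-noise case together with exactly your key observation, namely that under Assumption \ref{Assumption_Linear_State_Equation}(i) the convex combination of the two endpoint trajectories (driven by the same near-optimal control for the midpoint) coincides with the trajectory started from the convex combination, so Lemma \ref{lem:Xlambda} and the restriction $r=1$ are unnecessary, and the semiconcavity of $L_1$ and $U_T$ inherited from the $C^{1,1}$ lifts plus the stability estimate of Lemma \ref{Infinite_Dimensional_A_Priori} finish the argument. Your side remarks (Lipschitz continuity of affine continuous $B,\Sigma$ making Lemma \ref{Infinite_Dimensional_A_Priori} applicable, and the convexity hypotheses being reserved for the semiconvexity statement) are accurate.
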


\begin{proof}
	The proof uses again the same techniques as the corresponding result in \cite[Proposition 6.2]{mayorga_swiech_2023}. We note that now we have $X_\lambda=X^\lambda$, so Lemma \ref{lem:Xlambda} is not needed and we do not need $r=1$.
\end{proof}

\begin{proposition}[Semiconvexity: Case 1]\label{Value_Function_Semiconvex_1}
	Let Assumptions \ref{Assumption_b_sigma_lipschitz} and \ref{Assumption_running_terminal_cost} be satisfied with $r=1$. There is a constant $\nu_0\geq 0$ (depending on $T$ and other constants in Assumptions \ref{Assumption_b_sigma_lipschitz} and \ref{Assumption_running_terminal_cost}) such that if $\nu$ in Assumption \ref{Assumption_running_terminal_cost}(ii) satisfies $\nu \geq \nu_0$, then, for every $t\in [0,T]$, $U(t,\cdot)$ is semiconvex, i.e., there is a constant $C\geq 0$ such that
	\begin{equation}
		\lambda U(t,X) + (1-\lambda) U(t,Y) - U(t,\lambda X + (1-\lambda) Y) \geq - C \lambda (1-\lambda) \| X- Y \|_E^2
	\end{equation}
	for all $\lambda \in [0,1]$ and $X,Y\in E$. Moreover, the semiconvexity constant $C$ is independent of $t\in [0,T]$. 
\end{proposition}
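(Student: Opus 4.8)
The plan is to estimate the infimum defining $U(t,\lambda X+(1-\lambda)Y)$ from above by testing it against the convex combination of near-optimal controls for $X$ and $Y$, the gain coming from the uniform convexity of $l_2$. Fix $X,Y\in E$, $\lambda\in[0,1]$ and $\varepsilon>0$, and pick admissible controls $a_1(\cdot),a_0(\cdot)\in\mathcal A_t$ (which, as is standard, may be assumed to be defined on a common reference probability space, since the state equation is pathwise well-posed and one may always pass to a product space or to the canonical Wiener space) with $J(t,X;a_1(\cdot))\le U(t,X)+\varepsilon$ and $J(t,Y;a_0(\cdot))\le U(t,Y)+\varepsilon$. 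With $Y_1:=X$, $Y_0:=Y$, adopt the notation fixed before Lemma~\ref{lem:Xlambda}: $a_\lambda:=\lambda a_1+(1-\lambda)a_0\in\mathcal A_t$, $Y_\lambda:=\lambda X+(1-\lambda)Y$, $X_i(\cdot):=X(\cdot;Y_i,a_i(\cdot))$ for $i=0,1$, $X^\lambda:=\lambda X_1+(1-\lambda)X_0$, and $X_\lambda(\cdot):=X(\cdot;Y_\lambda,a_\lambda(\cdot))$. Since $U(t,Y_\lambda)\le J(t,Y_\lambda;a_\lambda(\cdot))$, it suffices to bound
\[
	\Delta:=J(t,Y_\lambda;a_\lambda(\cdot))-\lambda J(t,X;a_1(\cdot))-(1-\lambda)J(t,Y;a_0(\cdot))
\]
from above by $C\lambda(1-\lambda)\|X-Y\|_E^2$ with $C$ independent of $t$, and then let $\varepsilon\to0$.

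I would estimate the three contributions to $\Delta$ separately. For the control cost: since $p\mapsto l_2(p)-\nu|p|^2$ is convex, pointwise in $(\omega,s)$ one has $l_2(a_\lambda(s,\omega))\le\lambda l_2(a_1(s,\omega))+(1-\lambda)l_2(a_0(s,\omega))-\nu\lambda(1-\lambda)|a_1(s,\omega)-a_0(s,\omega)|^2$, so after integrating over $\Omega\times[t,T]$ and taking expectations the $L_2$-part of $\Delta$ is $\le-\nu\lambda(1-\lambda)\|a_1(\cdot)-a_0(\cdot)\|_{M^2(t,T;E)}^2$. For the running cost I would split
\[
	L_1(X_\lambda(s))-\lambda L_1(X_1(s))-(1-\lambda)L_1(X_0(s))=\bigl(L_1(X_\lambda(s))-L_1(X^\lambda(s))\bigr)+\bigl(L_1(X^\lambda(s))-\lambda L_1(X_1(s))-(1-\lambda)L_1(X_0(s))\bigr).
\]
The first bracket is controlled using that $L_1$ is Lipschitz with respect to $\|\cdot\|_{L^1(\Omega;\mathbb R^d)}$ (a consequence of Assumption~\ref{Assumption_running_terminal_cost}(i), since $d_1(Z_\#\mathcal L^1,Z'_\#\mathcal L^1)\le\|Z-Z'\|_{L^1}\le\|Z-Z'\|_E$) together with Lemma~\ref{lem:Xlambda}, giving a contribution to $\Delta$ bounded by $CT\int_\Omega\mathbb E[\sup_{s}|X_\lambda(s,\omega)-X^\lambda(s,\omega)|]\,\mathrm{d}\omega\le C\lambda(1-\lambda)\bigl(\|X-Y\|_E^2+\|a_1(\cdot)-a_0(\cdot)\|_{M^2(t,T;E)}^2\bigr)$. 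The second bracket uses that the functional $L_1$ on $E$ is semiconvex (this follows by differentiating $Z\mapsto\tilde l_1(Z(\cdot),Z)$ and invoking the Lipschitz continuity of $D_x\tilde l_1$ and $D_X\tilde l_1$, together with $\|\cdot\|_{L^1}\le\|\cdot\|_E$), so it is $\le C\lambda(1-\lambda)\|X_1(s)-X_0(s)\|_E^2$, and then Lemma~\ref{Infinite_Dimensional_A_Priori} yields $\mathbb E\int_t^T\|X_1(s)-X_0(s)\|_E^2\,\mathrm{d}s\le CT\bigl(\|X-Y\|_E^2+\|a_1(\cdot)-a_0(\cdot)\|_{M^2(t,T;E)}^2\bigr)$. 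The terminal cost $U_T$ is treated in exactly the same way, using that $U_T$ is $\|\cdot\|_{L^1}$-Lipschitz (Assumption~\ref{Assumption_running_terminal_cost}(iii)) and semiconvex (since $U_T\in C^{1,1}(E)$).

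Collecting these estimates produces a constant $C_1$, depending only on $T$, the Lipschitz and $C^{1,1}$ constants of the coefficients, and the constants in Lemmas~\ref{Infinite_Dimensional_A_Priori} and~\ref{lem:Xlambda} — in particular not on $t$, $\lambda$, $X$, $Y$, $a_0$ or $a_1$ — such that
\[
	\Delta\le(C_1-\nu)\lambda(1-\lambda)\|a_1(\cdot)-a_0(\cdot)\|_{M^2(t,T;E)}^2+C_1\lambda(1-\lambda)\|X-Y\|_E^2.
\]
Setting $\nu_0:=C_1$, if $\nu\ge\nu_0$ the first term is nonpositive, hence $U(t,Y_\lambda)\le\lambda U(t,X)+(1-\lambda)U(t,Y)+C_1\lambda(1-\lambda)\|X-Y\|_E^2+\varepsilon$; letting $\varepsilon\to0$ gives semiconvexity with the $t$-independent constant $C_1$. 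This is the same scheme as the semiconvexity proof for additive noise in \cite{mayorga_swiech_2023}, with Lemmas~\ref{Infinite_Dimensional_A_Priori} and~\ref{lem:Xlambda} replacing the additive-noise estimates used there.

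The structural point — and the reason a lower bound on $\nu$ is genuinely needed — is that the combined control $a_\lambda$ is a perturbative device: the nonlinearity of $B$ and $\Sigma$ (through Lemma~\ref{lem:Xlambda}) and the semiconvexity defects of $L_1$ and $U_T$ evaluated along $X_1-X_0$ (through Lemma~\ref{Infinite_Dimensional_A_Priori}) all generate error terms proportional to $\lambda(1-\lambda)\|a_1(\cdot)-a_0(\cdot)\|_{M^2(t,T;E)}^2$, which can only be absorbed by the negative term $-\nu\lambda(1-\lambda)\|a_1(\cdot)-a_0(\cdot)\|_{M^2(t,T;E)}^2$ coming from the uniform convexity of $l_2$; this is also why $r=1$ (required by Lemma~\ref{lem:Xlambda}) enters. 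The remaining work is routine bookkeeping of the constants and the verification of semiconvexity of $L_1$ and $U_T$ on $E$ from the $C^{1,1}$ assumptions on their lifts.
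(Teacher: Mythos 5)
Your proposal is correct and follows essentially the same route as the paper, which proves the statement by repeating the additive-noise argument of \cite[Proposition 6.1(iii)]{mayorga_swiech_2023} with Lemmas \ref{Infinite_Dimensional_A_Priori} and \ref{lem:Xlambda} supplying the multiplicative-noise estimates: near-optimal controls for $X$ and $Y$, the convex-combination control for $\lambda X+(1-\lambda)Y$, the uniform convexity of $l_2$ absorbing the $\lambda(1-\lambda)\|a_1(\cdot)-a_0(\cdot)\|_{M^2}^2$ error terms, and the $C^{1,1}$/Lipschitz properties of $L_1$ and $U_T$ handling the cost terms. Your identification of why $\nu\geq\nu_0$ and $r=1$ are needed matches the paper's reasoning.
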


\begin{proof}
	The proof follows along the same lines as the proof in the case of additive noise, see \cite[Proposition 6.1(iii)]{mayorga_swiech_2023}, now using Lemmas \ref{Infinite_Dimensional_A_Priori} and \ref{lem:Xlambda}.
\end{proof}

\begin{proposition}[Semiconvexity: Case 2]\label{Value_Function_Semiconvex_2}
	Let Assumptions \ref{Assumption_running_terminal_cost} and \ref{Assumption_Linear_State_Equation} be satisfied. Then, for every $t\in [0,T]$, $U(t,\cdot)$ is convex.
\end{proposition}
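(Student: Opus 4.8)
The plan is to exploit the affine structure of the state equation \eqref{Lifted_State_Equation} directly, bypassing any differentiability considerations (in contrast to Proposition \ref{Value_Function_Semiconvex_1}, here plain convexity of the data suffices). The starting point is the following linearity property of controlled trajectories. Fix $Y_0,Y_1\in E$, a reference probability space, controls $a_0(\cdot),a_1(\cdot)\in\mathcal{A}_t$ on it, and $\lambda\in[0,1]$; set $Y_\lambda:=\lambda Y_1+(1-\lambda)Y_0$, $a_\lambda(\cdot):=\lambda a_1(\cdot)+(1-\lambda)a_0(\cdot)$ and $X^\lambda(\cdot):=\lambda X_1(\cdot)+(1-\lambda)X_0(\cdot)$, where $X_i(\cdot)$ solves \eqref{Lifted_State_Equation} with initial condition $Y_i$ and control $a_i(\cdot)$. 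Since $B$ and $\Sigma$ are affine, $\lambda B(X_1(s))+(1-\lambda)B(X_0(s))=B(X^\lambda(s))$ and $\lambda\Sigma(X_1(s))+(1-\lambda)\Sigma(X_0(s))=\Sigma(X^\lambda(s))$, and the control enters the drift linearly; hence $-a_\lambda(s)+B(X^\lambda(s))=\lambda(-a_1(s)+B(X_1(s)))+(1-\lambda)(-a_0(s)+B(X_0(s)))$, and analogously for the diffusion term. Therefore $X^\lambda(\cdot)$ is a strong solution of \eqref{Lifted_State_Equation} with initial condition $Y_\lambda$ and control $a_\lambda(\cdot)$, so by uniqueness of strong solutions $X^\lambda(\cdot)=X(\cdot;Y_\lambda,a_\lambda(\cdot))$. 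Moreover $a_\lambda(\cdot)\in\mathcal{A}_t$, since it is progressively measurable and $\|a_\lambda(\cdot)\|_{M^2(t,T;E)}\leq\lambda\|a_1(\cdot)\|_{M^2(t,T;E)}+(1-\lambda)\|a_0(\cdot)\|_{M^2(t,T;E)}<\infty$.

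Next I would verify that all three cost terms are convex on $E$. The function $U_T$ is convex by Assumption \ref{Assumption_Linear_State_Equation}(ii). For $L_1$, writing $L_1(X)=\int_\Omega\tilde l_1(X(\omega),X)\,\mathrm{d}\omega$ and using the joint convexity of $\tilde l_1$ on $\mathbb{R}^d\times E$ from Assumption \ref{Assumption_Linear_State_Equation}(ii), one has, pointwise in $\omega$,
\[
\tilde l_1\big(\lambda X_1(\omega)+(1-\lambda)X_0(\omega),\,\lambda X_1+(1-\lambda)X_0\big)\leq\lambda\,\tilde l_1(X_1(\omega),X_1)+(1-\lambda)\,\tilde l_1(X_0(\omega),X_0),
\]
and integrating over $\Omega$ gives convexity of $L_1$. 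For $L_2$, by Assumption \ref{Assumption_running_terminal_cost}(ii) the function $l_2(p)=\big(l_2(p)-\nu|p|^2\big)+\nu|p|^2$ is a sum of two convex functions (recall $\nu\geq0$), hence $l_2$ is convex, and $L_2(X)=\int_\Omega l_2(X(\omega))\,\mathrm{d}\omega$ inherits convexity by integrating the pointwise convexity inequality.

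To conclude, fix $t\in[0,T]$ and $\varepsilon>0$, and choose $\varepsilon$-optimal controls $a_i(\cdot)$ for $(t,Y_i)$, $i=0,1$, defined on a common reference probability space — this is legitimate by the standard fact that the infimum in \eqref{Lifted_Value_Function} may be taken over admissible controls on a single fixed reference probability space (see e.g. \cite{fabbri_gozzi_swiech_2017,yong_zhou_1999}). Then, combining the two steps above,
\begin{align*}
	U(t,Y_\lambda)
	&\leq J(t,Y_\lambda;a_\lambda(\cdot))
	= \mathbb{E}\left[\int_t^T\big(L_1(X^\lambda(s))+L_2(a_\lambda(s))\big)\,\mathrm{d}s+U_T(X^\lambda(T))\right]\\
	&\leq \lambda\,\mathbb{E}\left[\int_t^T\big(L_1(X_1(s))+L_2(a_1(s))\big)\,\mathrm{d}s+U_T(X_1(T))\right]\\
	&\quad+(1-\lambda)\,\mathbb{E}\left[\int_t^T\big(L_1(X_0(s))+L_2(a_0(s))\big)\,\mathrm{d}s+U_T(X_0(T))\right]\\
	&= \lambda\,J(t,Y_1;a_1(\cdot))+(1-\lambda)\,J(t,Y_0;a_0(\cdot))
	\leq \lambda\,U(t,Y_1)+(1-\lambda)\,U(t,Y_0)+\varepsilon.
\end{align*}
Letting $\varepsilon\to0$ gives $U(t,Y_\lambda)\leq\lambda\,U(t,Y_1)+(1-\lambda)\,U(t,Y_0)$, i.e., $U(t,\cdot)$ is convex; since no $t$-dependent constants enter, this holds for every $t\in[0,T]$.

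The argument is essentially routine once the affine structure is isolated; the only mild technicality is the reduction to a common reference probability space for the two near-optimal controls, which is standard in stochastic control. As in \cite[Proposition 6.2]{mayorga_swiech_2023}, one should also note that convex combinations of admissible controls remain admissible, which we checked above via the triangle inequality in $M^2(t,T;E)$. I expect no serious obstacle beyond carefully bookkeeping these standard points.
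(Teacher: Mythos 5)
Your proof is correct and is essentially the paper's argument: the paper simply invokes the additive-noise proof of \cite[Proposition 6.2]{mayorga_swiech_2023}, whose whole point is precisely the observation you prove directly, namely that under Assumption \ref{Assumption_Linear_State_Equation} the convex combination of trajectories is itself the trajectory for $(Y_\lambda,a_\lambda(\cdot))$ (i.e.\ $X_\lambda=X^\lambda$, so Lemma \ref{lem:Xlambda} and $r=1$ are not needed), combined with convexity of $L_1$, $L_2$, $U_T$ and an $\varepsilon$-optimal control argument on a common reference probability space. Your bookkeeping of admissibility of $a_\lambda(\cdot)$ and of the reduction to a single reference probability space matches the standard setup the paper relies on, so there is no gap.
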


\begin{proof}
	The proof again repeats the one in the case of additive noise, see \cite[Proposition 6.2]{mayorga_swiech_2023}. We note again that we have $X_\lambda=X^\lambda$, so Lemma \ref{lem:Xlambda} is not needed and we do not need $r=1$.
\end{proof}

\begin{remark}\label{remark_joint_continuity}
	\begin{enumerate}[label=(\roman*)]
		\item If for fixed $t\in [0,T]$, the value function $V(t,\cdot)$ is uniformly continuous, semiconcave and semiconvex, then it is in $C^{1,1}(E)$, see \cite{lasry_lions_1986}. Therefore, if either the assumptions of Propositions \ref{Value_Function_Semiconcave_1} and \ref{Value_Function_Semiconvex_1} are satisfied, or the assumptions of Propositions \ref{Value_Function_Semiconcave_2} and \ref{Value_Function_Semiconvex_2} are satisfied, then $V(t,\cdot)\in C^{1,1}(E)$, for all $t\in [0,T]$ and the Lipschitz constant of $DV(t,\cdot)$ is independent of $t$.
		\item If $V$ is uniformly continuous on bounded subsets of $[0,T]\times E$, $V(t,\cdot)\in C^{1,1}(E)$, for all $t\in [0,T]$ and the Lipschitz constant of $DV(t,\cdot)$ is independent of $t$, then by \cite[Lemma 4.8]{defeo_swiech_wessels_2023} (more precisely by its proof which here can be done on $[0,T]\times E$), $DV$ is uniformly continuous on bounded subsets of $[0,T]\times E$.
	\end{enumerate}
\end{remark}

\begin{remark}\label{rem:Lipint}
	We remark that since $U$ is a viscosity solution of \eqref{intro:lifted_HJB}, if $U(t,\cdot)\in C^{1,1}(E)$ for all $t\in [0,T]$ and the semiconvexity and semiconcavity constants are independent of $t$, then for every $R>0$ there is a constant $C_R$ such that
	\begin{equation}
		|U(t,X)-U(s,X)|\leq C_R|t-s|\quad\text{for all}\,\,t,s\in[0,T],\,\, \|X\|_E\leq R.
	\end{equation}
	We sketch the argument. Let $\eta<T/2$. Define for $\varepsilon>0$, the partial sup-convolution functions
	\begin{equation}
		U^\varepsilon(t,X):=\sup_{s\in[0,T]}\left\{U(s,X)-\frac{(t-s)^2}{\varepsilon}\right\}.
	\end{equation}
	We note that $U^{\varepsilon}$ is Lipschitz continuous in both variables and
	\begin{equation}
		(t,X)\to U^\varepsilon(t,X)+\frac{t^2}{\varepsilon}+C\|X\|_E^2
	\end{equation}
	is convex for some universal constant $C$.
	It is easy to see by standard viscosity solution arguments that for every $R>0$, for sufficiently small $\varepsilon$, the functions $U^\varepsilon$ are viscosity subsolutions of 
	\begin{equation}
		\partial_t U^\varepsilon=-C_R\quad\text{in}\,\,(\eta,T-\eta)\times B_R(0)
	\end{equation}
	for some constant $C_R>0$ independent of $\varepsilon$. (We remark that the above equation is considered to be a terminal value problem.) To obtain this, we used the Lipschitz continuity of $U$ in $X$, the growth bounds on the coefficient functions in \eqref{intro:lifted_HJB}, and the fact that if $U-\varphi$ has a maximum at some point $(t,X)$ for a smooth function $\varphi$, then $D^2\varphi(t,X)\geq -CI$ for some universal constant $C>0$. Hence, we obtained that on a dense set of points $(t,X)$ such that $U^\varepsilon-\varphi$ has a maximum at $(t,X)$ for a smooth function $\varphi$, which in particular implies that $U^\varepsilon$ is differentiable at $(t,X)$, we have $\partial_tU^\varepsilon(t,X)\geq -C_R$. By density, this implies that this inequality holds for every point $(t,X)$ of differentiability of $U^\varepsilon$ in $(\eta,T-\eta)\times B_R(0)$. Now, since for a semiconvex function $u$ in a Hilbert space
	\begin{equation}
		D^-u(x)=\overline{\rm conv}\{p: Du(y_n)\rightharpoonup p, y_n\to x\},
	\end{equation}
	where $D^-u(x)$ above is the subdifferential of $u$ at $x$ and $y_n$ are points of differentiability of $u$, we conclude that for every $(t,X)\in (\eta,T-\eta)\times B_R(0)$ and $(p_t,P_X)\in D^-U^\varepsilon(t,X)$, we have $p_t\geq -C_R$. Thus, if $X\in B_R(0)$ is fixed and $\eta<t<s<T-\eta$,
	\begin{equation}
		U^\varepsilon(s,X)-U^\varepsilon(t,X)=\int_t^s\partial_tU^\varepsilon(s^{\prime},X)\mathrm{d}s^{\prime}\geq -C_R(s-t),
	\end{equation}
	where we used that if $\partial_tU^\varepsilon(s^{\prime},X)$ exists, then $\partial_tU^\varepsilon(s^{\prime},X)=p_t$ for every $(p_t,P_X)\in D^-U^\varepsilon(t,X)$, in particular the $p_t$ element of the subdifferential is unique.
	Since $C_R$ does neither depend on $\eta$ nor on $\varepsilon$, by sending first $\varepsilon\to 0$ and then $\eta\to 0$, we obtain
	\begin{equation}
		U(s,X)-U(t,X)\geq -C_R(s-t)\quad\text{for all}\,\,0\leq t\leq s\leq T,\,\, X\in B_R(0).
	\end{equation}
	The opposite inequality is obtained by considering the partial inf-convolutions and arguing as above.
\end{remark}

\section{Projection of \texorpdfstring{$V$}{V} when \texorpdfstring{$DV$}{DV} is Continuous}\label{section:projection_C11}

In Theorem \ref{theorem:convergence}, we showed that the finite dimensional value functions $u_n$ converge to the infinite dimensional value function $V$. In this section, we are going to show that when $DV$ is continuous, $V$ actually projects precisely onto $u_n$. Note that the value function satisfies this regularity assumption in the case of $C^{1,1}$ regularity discussed in Section \ref{section:C11_regularity}, see Remark \ref{remark_joint_continuity}(ii).

For $n\in\mathbb{N}$, $i \in \{1,\dots,n\}$, recall that $A_i^n = \left (\frac{i-1}{n},\frac{i}{n} \right ) \subset \Omega$. Now, we introduce $V_n : [0,T]\times (\mathbb{R}^d)^n \to \mathbb{R}$,
\begin{equation}\label{Projection_V_n}
	V_n(t,x_1,\dots,x_n) := V\left ( t, \sum_{i=1}^n x_i \mathbf{1}_{A^n_i} \right ).
\end{equation}

\subsection{\texorpdfstring{$V_n\leq u_n$}{Vn<un}}

In this section, we are going to show that $V_n\leq u_n$, where $u_n$ is given by \eqref{finite_dimensional_value_function}. First, we prove the following result.
\begin{lemma}\label{lemma_subsolution}
	Let Assumptions \ref{Assumption_b_sigma_lipschitz} and \ref{Assumption_running_terminal_cost} be satisfied. Then, for any control $\mathbf{a}(\cdot)\in \mathcal{A}^n_t$, we have
	\begin{equation}
		J_n(t,\mathbf{x};\mathbf{a}(\cdot)) = J(t,X^{\mathbf{x}}_n;a^n(\cdot)),
	\end{equation}
	where $X^{\mathbf{x}}_n = \sum_{i=1}^n x_i \mathbf{1}_{A^n_i}$ and $a^n(\cdot) = \sum_{i=1}^n a_i(\cdot) \mathbf{1}_{A^n_i}$.
\end{lemma}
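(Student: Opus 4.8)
The plan is to show that the lifted state process driven by the piecewise constant control $a^n(\cdot)$ with piecewise constant initial datum $X^{\mathbf{x}}_n$ is itself constant on each cell of the partition $\{A^n_i\}_{i=1}^n$, and that its $n$ ``pieces'' solve precisely the particle system \eqref{state_equation}. First I would record the elementary fact that any $Z = \sum_{i=1}^n z_i \mathbf{1}_{A^n_i} \in E_n$ satisfies $Z_{\texttt{\#}}\mathcal{L}^1 = \mu_{\mathbf{z}} = \frac1n\sum_{i=1}^n \delta_{z_i}$, since each $A^n_i$ has Lebesgue measure $1/n$. Consequently $B(Z)(\omega) = b(z_i,\mu_{\mathbf{z}})$ and $\Sigma(Z)(\omega) = \sigma(z_i,\mu_{\mathbf{z}})$ for $\omega \in A^n_i$, i.e. $B$ and $\Sigma$ map an element of $E_n$ to a function which is again constant on each $A^n_i$.

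Next I would let $\mathbf{X}(\cdot) = (X_1(\cdot),\dots,X_n(\cdot))$ be the solution of \eqref{state_equation} on the same reference probability space $\tau$ that carries $\mathbf{a}(\cdot)$, and set $\bar X(s) := \sum_{i=1}^n X_i(s)\mathbf{1}_{A^n_i} \in E_n$. By the observation above, for $\omega\in A^n_i$ we have $B(\bar X(s))(\omega) = b(X_i(s),\mu_{\mathbf{X}(s)})$ and $\Sigma(\bar X(s))(\omega) = \sigma(X_i(s),\mu_{\mathbf{X}(s)})$, so inserting $\bar X$ into the integral form of \eqref{Lifted_State_Equation} and restricting to $\omega\in A^n_i$ recovers exactly the $i$-th equation of \eqref{state_equation}. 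Hence $\bar X(\cdot)$ is a continuous, adapted, square-integrable solution of \eqref{Lifted_State_Equation} with initial condition $X^{\mathbf{x}}_n$ and control $a^n(\cdot)$; since $B$ and $\Sigma$ are Lipschitz on $E$ (as noted after Assumption \ref{Assumption_b_sigma_lipschitz}), this solution is unique, so the solution $X(\cdot)$ of \eqref{Lifted_State_Equation} with data $(X^{\mathbf{x}}_n, a^n(\cdot))$ coincides with $\bar X(\cdot)$, $\mathbb{P}$-a.s. In particular $a^n(\cdot)\in\mathcal{A}_t$: it is $\mathcal{F}^t_s$-progressively measurable, and $\|a^n(s)\|_E^2 = \frac1n|\mathbf{a}(s)|^2$ gives $\|a^n(\cdot)\|_{M^2(t,T;E)}^2 = \frac1n\|\mathbf{a}(\cdot)\|_{M^2(t,T;(\mathbb{R}^d)^n)}^2<\infty$.

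Finally I would match the running and terminal costs. Since $X(s) = \sum_{i=1}^n X_i(s)\mathbf{1}_{A^n_i}$, we get $X(s)_{\texttt{\#}}\mathcal{L}^1 = \mu_{\mathbf{X}(s)}$, hence $L_1(X(s)) = \int_{\Omega} l_1(X(s,\omega),\mu_{\mathbf{X}(s)})\mathrm{d}\omega = \frac1n\sum_{i=1}^n l_1(X_i(s),\mu_{\mathbf{X}(s)})$; likewise $L_2(a^n(s)) = \frac1n\sum_{i=1}^n l_2(a_i(s))$ and $U_T(X(T)) = \mathcal{U}_T(\mu_{\mathbf{X}(T)})$. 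Substituting these into $J(t,X^{\mathbf{x}}_n;a^n(\cdot))$ and comparing with \eqref{cost_functional} yields $J(t,X^{\mathbf{x}}_n;a^n(\cdot)) = J_n(t,\mathbf{x};\mathbf{a}(\cdot))$, which is the claim. The only genuinely substantive step is the invariance/decomposition argument, i.e. showing that the lifted solution stays in $E_n$ and splits into the particle system; everything else is bookkeeping with the partition $\{A^n_i\}_{i=1}^n$.
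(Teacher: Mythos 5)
Your proposal is correct and follows essentially the same route as the paper: both identify the solution of the lifted SDE started at $X^{\mathbf{x}}_n$ with control $a^n(\cdot)$ as the piecewise-constant lift of the particle system via uniqueness of solutions to \eqref{Lifted_State_Equation}, and then match the running and terminal costs using $(X^{\mathbf{x}}_n)_{\texttt{\#}}\mathcal{L}^1=\mu_{\mathbf{x}}$. The only cosmetic difference is that you verify directly that the lift $\bar X(\cdot)$ solves the lifted equation (and check admissibility of $a^n(\cdot)$ explicitly), whereas the paper restricts the lifted equation to each cell $A^n_i$ and observes that the lift of $\mathbf{X}(\cdot)$ satisfies it; these are the same argument.
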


\begin{proof}
	Let $X^{n}(\cdot)$ be the solution of the lifted state equation \eqref{Lifted_State_Equation} with initial condition $X^{\mathbf{x}}_n$ and control $a^n(\cdot)$. First note that $\mathbb{P}$-a.s., for every $s\in [t,T]$,
	\begin{equation}
		X^{n}(s) = \sum_{i=1}^n x_i \mathbf{1}_{A^n_i} - \int_t^s \sum_{i=1}^n a_i(r) \mathbf{1}_{A^n_i} \mathrm{d}r +\int_t^s B(X^{n}(r)) \mathrm{d}r + \int_t^s \Sigma(X^{n}(r)) \mathrm{d}W(r),
	\end{equation}
	and therefore, for almost every $\omega\in A^n_i = (\frac{i-1}{n},\frac{i}{n})$, we have
	\begin{equation}
		\begin{split}
			&X^{n}(s,\omega)\\
			&= x_i - \int_t^s a_i(r) \mathrm{d}r + \int_t^s b(X^{n}(r,\omega),X^{n}(r)_{\texttt{\#}}\mathcal{L}^1) \mathrm{d}r + \int_t^s \sigma(X^{n}(r,\omega),X^{n}(r)_{\texttt{\#}}\mathcal{L}^1) \mathrm{d}W(r).
		\end{split}
	\end{equation}
	Noting that for the lift $X^{\mathbf{X}(s)}_n= \sum_{i=1}^n X_i(s) \mathbf{1}_{A^n_i}$ of the solution $\mathbf{X}(s)$ of equation \eqref{state_equation}, we have $\mu_{\mathbf{X}(s)} = (X_n^{\mathbf{X}(s)})_{\texttt{\#}}\mathcal{L}^1$, we see that $X^{\mathbf{X}(s)}_n$ is a solution of the previous equation, and therefore, by uniqueness, we have $\mathbb{P}$-a.s., for every $s\in [t,T]$, $X^{n}(s,\omega) = X^{\mathbf{X}(s)}_n(\omega)$ for almost every $\omega\in \Omega$. Thus, we have
	\begin{equation}
		\begin{split}
			&J(t,X^{\mathbf{x}}_n;a^n(\cdot))\\
			&= \mathbb{E} \left [ \int_t^T \left ( L_1(X^{\mathbf{X}(s)}_n) + L_2(a^n(s)) \right ) \mathrm{d}s + U_T(X^{\mathbf{X}(T)}_n) \right ]\\
			&= \mathbb{E} \left [ \int_t^T \left ( \int_{\Omega} l_1(X_n^{\mathbf{X}(s)}(\omega),(X_n^{\mathbf{X}(s)})_{\texttt{\#}}\mathcal{L}^1) \mathrm{d}\omega + \int_{\Omega} l_2(a^n(s,\omega)) \mathrm{d}\omega \right ) \mathrm{d}s + U_T(X^{\mathbf{X}(T)}_n) \right ]\\
			&= \mathbb{E} \left [ \int_t^T \left ( \frac{1}{n} \sum_{i=1}^n \left ( l_1(X_i(s),\mu_{\mathbf{X}(s)}) + l_2(a_i(s)) \right ) \right ) \mathrm{d}s + \mathcal{U}_T(\mu_{\mathbf{X}(T)}) \right ]\\
			&= J_n(t,\mathbf{x};\mathbf{a}(\cdot)),
		\end{split}
	\end{equation}
	which concludes the proof.
\end{proof}
From this result, we immediately derive the following inequality.
\begin{proposition}\label{proposition:V_n_leq_u_n}
	Let Assumptions \ref{Assumption_b_sigma_lipschitz} and \ref{Assumption_running_terminal_cost} be satisfied. Then, it holds
	\begin{equation}
		V_n(t,\mathbf{x}) \leq u_n(t,\mathbf{x})
	\end{equation}
	for all $(t,\mathbf{x}) \in [0,T] \times (\mathbb{R}^d)^n$.
\end{proposition}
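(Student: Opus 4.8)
The plan is to deduce Proposition~\ref{proposition:V_n_leq_u_n} directly from Lemma~\ref{lemma_subsolution} together with the definitions of the two value functions. The key observation is that Lemma~\ref{lemma_subsolution} exhibits, for every admissible control $\mathbf{a}(\cdot)\in\mathcal{A}^n_t$ of the finite dimensional problem, a corresponding admissible control $a^n(\cdot):=\sum_{i=1}^n a_i(\cdot)\mathbf{1}_{A^n_i}$ for the infinite dimensional problem — with the \emph{same} reference probability space — whose cost equals $J_n(t,\mathbf{x};\mathbf{a}(\cdot))$. Since $V=U$ by Theorem~\ref{theorem:convergence}, and $U(t,X^{\mathbf{x}}_n)$ is an infimum over all of $\mathcal{A}_t$, one immediately gets an upper bound.

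Concretely, I would argue as follows. Fix $(t,\mathbf{x})\in[0,T]\times(\mathbb{R}^d)^n$. First check that $a^n(\cdot)=\sum_{i=1}^n a_i(\cdot)\mathbf{1}_{A^n_i}$ indeed belongs to $\mathcal{A}_t$: it is $\mathcal{F}^t_s$-progressively measurable on the same reference probability space as $\mathbf{a}(\cdot)$ since each $a_i(\cdot)$ is, it is $E$-valued, and
\begin{equation*}
	\|a^n(\cdot)\|_{M^2(t,T;E)}^2=\mathbb{E}\Big[\int_t^T\int_\Omega|a^n(s,\omega)|^2\,\mathrm{d}\omega\,\mathrm{d}s\Big]=\mathbb{E}\Big[\int_t^T\frac1n\sum_{i=1}^n|a_i(s)|^2\,\mathrm{d}s\Big]=\frac1n\|\mathbf{a}(\cdot)\|_{M^2(t,T;(\mathbb{R}^d)^n)}^2<\infty.
\end{equation*}
Then, by Lemma~\ref{lemma_subsolution} and the definition \eqref{Lifted_Value_Function} of $U$,
\begin{equation*}
	U(t,X^{\mathbf{x}}_n)\leq J(t,X^{\mathbf{x}}_n;a^n(\cdot))=J_n(t,\mathbf{x};\mathbf{a}(\cdot)).
\end{equation*}
Taking the infimum over $\mathbf{a}(\cdot)\in\mathcal{A}^n_t$ on the right-hand side gives $U(t,X^{\mathbf{x}}_n)\leq u_n(t,\mathbf{x})$. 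Finally, using $V=U$ from Theorem~\ref{theorem:convergence} and the definition \eqref{Projection_V_n} of $V_n$, namely $V_n(t,\mathbf{x})=V(t,X^{\mathbf{x}}_n)=U(t,X^{\mathbf{x}}_n)$, we conclude $V_n(t,\mathbf{x})\leq u_n(t,\mathbf{x})$.

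There is no real obstacle here — the content has been front-loaded into Lemma~\ref{lemma_subsolution}, whose nontrivial step was the pathwise identification $X^n(s,\omega)=X^{\mathbf{X}(s)}_n(\omega)$ via uniqueness of solutions to the lifted SDE. The only minor points to be careful about in writing the proof are that the admissibility class $\mathcal{A}_t$ allows arbitrary reference probability spaces (so reusing the one carrying $\mathbf{a}(\cdot)$ is legitimate) and that one invokes $V=U$, which is why Assumptions~\ref{Assumption_b_sigma_lipschitz} and \ref{Assumption_running_terminal_cost} are exactly the hypotheses needed. Note that this inequality is the "easy" half of the exact projection property; the reverse inequality $u_n\leq V_n$ is the one requiring viscosity methods and the continuity of $DV$, and is treated separately in the subsequent subsection.
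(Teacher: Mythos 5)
Your argument is correct and is precisely the one the paper intends: the proposition is stated as an immediate consequence of Lemma \ref{lemma_subsolution}, using the identity $V=U$ from Theorem \ref{theorem:convergence} and taking the infimum over $\mathbf{a}(\cdot)\in\mathcal{A}^n_t$, exactly as you do. Your additional check that the lifted control $a^n(\cdot)$ is admissible is a sensible detail that the paper leaves implicit.
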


Note that the proof of the inequality $V_n\leq u_n$ does not require any differentiability of $V$.

\subsection{\texorpdfstring{$V_n \geq u_n$}{Vn>un}}

We now discuss the opposite inequality. For the proof, we are going to show that $V_n$ is a viscosity supersolution of equation \eqref{intro:finite_dimensional_HJB}. The desired inequality then follows from the comparison principle. Note that for the proof, we need to assume that $DV$ is continuous. However, as it was pointed out in Remark \ref{remark_joint_continuity}, this holds if either the assumptions of Propositions \ref{Value_Function_Semiconcave_1} and \ref{Value_Function_Semiconvex_1} are satisfied, or the assumptions of Propositions \ref{Value_Function_Semiconcave_2} and \ref{Value_Function_Semiconvex_2} are satisfied.

\begin{proposition}\label{proposition:V_n_geq_u_n}
	Let Assumptions \ref{Assumption_b_sigma_lipschitz} and \ref{Assumption_running_terminal_cost} be satisfied. Moreover, let $DV$ be continuous on $[0,T]\times E$. Then, it holds
	\begin{equation}
		V_n(t,\mathbf{x}) \geq u_n(t,\mathbf{x})
	\end{equation}
	for all $(t,\mathbf{x}) \in [0,T] \times (\mathbb{R}^d)^n$.
\end{proposition}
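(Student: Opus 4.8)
The plan is to show that $V_n$ is a viscosity supersolution of the finite dimensional HJB equation \eqref{intro:finite_dimensional_HJB}. Since $V_n(T,\mathbf{x})=V(T,X^{\mathbf{x}}_n)=\mathcal{V}(T,\mu_{\mathbf{x}})=\mathcal{U}_T(\mu_{\mathbf{x}})=u_n(T,\mathbf{x})$ and $u_n$ is the unique viscosity solution of \eqref{intro:finite_dimensional_HJB}, the comparison principle for that equation then yields $V_n\geq u_n$ on $[0,T]\times(\mathbb{R}^d)^n$. The only substantial point is the supersolution property, and I would prove it by lifting a finite dimensional test function to a test function on $E$ and using that, by Theorem \ref{theorem:convergence}, $V$ is a viscosity solution of the lifted equation \eqref{intro:lifted_HJB}.

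So let $\varphi_n\in C^{1,2}((0,T)\times(\mathbb{R}^d)^n)$ be such that $V_n-\varphi_n$ has a local minimum (which we may take to be strict) at $(t_0,\mathbf{x}_0)\in(0,T)\times(\mathbb{R}^d)^n$, normalized so that the minimum value is $0$. Let $\pi_n:E\to E_n$ be the orthogonal projection, identify $E_n$ with $(\mathbb{R}^d)^n$ via $\mathbf{x}\mapsto X^{\mathbf{x}}_n$, write $\tilde\varphi_n(t,X^{\mathbf{x}}_n):=\varphi_n(t,\mathbf{x})$ for the induced function on $(0,T)\times E_n$, and set $X_0:=X^{\mathbf{x}_0}_n$. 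I would then work with a test function on $E$ of the form $\psi(t,X):=\tilde\varphi_n(t,\pi_nX)+\chi(X-\pi_nX)$, where $\chi:E_n^{\perp}\to\mathbb{R}$ is smooth with $\chi(0)=0$, $D\chi(0)=0$ and $D^2\chi(0)$ vanishing on $E_n$, chosen so that $V-\psi$ has a local minimum at $(t_0,X_0)$. The point of this ansatz is that it matches the derivatives entering the two equations. Indeed, $DV$ being continuous gives $V(t,\cdot)\in C^1(E)$, and by the relation between Fr\'echet and $L$-derivatives recalled in the remark following Assumption \ref{Assumption_Linear_State_Equation} (see also \cite{carmona_delarue_2018}) one has $DV(t,X^{\mathbf{y}}_n)(\omega)=\partial_{\mu}\mathcal{V}(t,\mu_{\mathbf{y}})(y_i)$ for $\omega\in A^n_i$; hence $DV(t,\cdot)$ maps $E_n$ into $E_n$, the derivative of $V(t,\cdot)$ in directions orthogonal to $E_n$ vanishes at every point of $E_n$, and $DV(t_0,X_0)$ is constant on each $A^n_i$ with value $nD_{x_i}\varphi_n(t_0,\mathbf{x}_0)$. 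On the other hand $\Sigma(X_0)e_m^{\prime}$ is constant on each $A^n_i$, hence lies in $E_n$, so, applying the computation \eqref{computation_second_derivative} to $\varphi=\tilde\varphi_n\circ\pi_n$, we get $\sum_{m=1}^{d^{\prime}}\langle D^2\psi(t_0,X_0)\Sigma(X_0)e_m^{\prime},\Sigma(X_0)e_m^{\prime}\rangle_E=\mathrm{Tr}\bigl(A_n(\mathbf{x}_0,\mu_{\mathbf{x}_0})D^2\varphi_n(t_0,\mathbf{x}_0)\bigr)$, and by the definition \eqref{definition_H_tilde} of $\tilde H$ we get $\tilde H(X_0,\mu_{\mathbf{x}_0},DV(t_0,X_0))=\frac1n\sum_{i=1}^nH(x_i^0,\mu_{\mathbf{x}_0},nD_{x_i}\varphi_n(t_0,\mathbf{x}_0))$, where $x_i^0$ is the $i$-th component of $\mathbf{x}_0$. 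Consequently, once $V-\psi$ is known to have a local minimum at $(t_0,X_0)$, the supersolution inequality satisfied by $V$ at $(t_0,X_0)$ against $\psi$ becomes, term by term, exactly the supersolution inequality for $V_n$ at $(t_0,\mathbf{x}_0)$.

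The main obstacle is to choose $\chi$ so that $V-\psi$ genuinely has a local minimum at $(t_0,X_0)$, i.e.\ to control $V$ in the directions orthogonal to $E_n$. Writing $X=X'+Z$ with $X'=\pi_nX\in E_n$ and $Z\in E_n^{\perp}$, the minimality of $V_n-\varphi_n$ gives $V(t,X')-\tilde\varphi_n(t,X')=V_n(t,\mathbf{x}')-\varphi_n(t,\mathbf{x}')\geq0$ near $(t_0,X_0)$, so everything reduces to a lower bound on $V(t,X'+Z)-V(t,X')$ and, since the $E_n^{\perp}$-directional derivative of $V(t,\cdot)$ at points of $E_n$ vanishes, on its behaviour to second order in $Z$. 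If $V(t,\cdot)$ is semiconvex uniformly in $t$ — which is the case precisely under the hypotheses of Propositions \ref{Value_Function_Semiconcave_1} and \ref{Value_Function_Semiconvex_1}, or of Propositions \ref{Value_Function_Semiconcave_2} and \ref{Value_Function_Semiconvex_2}, that is, in the situations where $DV$ is known to be continuous, see Remark \ref{remark_joint_continuity} — then $V(t,X'+Z)-V(t,X')\geq-C_{\mathrm{sc}}\|Z\|_E^2$ and one may simply take $\chi(Z)=-C\|Z\|_E^2$ with $C\geq C_{\mathrm{sc}}$. Assuming only continuity of $DV$, one has $V(t,X'+Z)-V(t,X')=\int_0^1\langle DV(t,X'+sZ)-DV(t,X'),Z\rangle_E\,\mathrm{d}s\geq-\omega(\|Z\|_E)\|Z\|_E$ for a modulus of continuity $\omega$, which need not be of quadratic order, so no smooth product-type test function works directly; instead one replaces $\chi$ by a penalization $-\varepsilon^{-1}\|Z\|_E^2$ (together with a penalization in $t$), locates a near-minimizer $(t_{\varepsilon},X_{\varepsilon})$ of $V-\psi_{\varepsilon}$ with $Z_{\varepsilon}:=X_{\varepsilon}-\pi_nX_{\varepsilon}\to0$ (here strictness of the minimum is used), applies the supersolution inequality for $V$ at $(t_{\varepsilon},X_{\varepsilon})$, and lets $\varepsilon\to0$; the potentially divergent contribution of the penalization to the second order term is harmless because $\Sigma$ is Lipschitz, whence $\|(I-\pi_n)\Sigma(X_{\varepsilon})e_m^{\prime}\|_E\leq C\|Z_{\varepsilon}\|_E$ and $\varepsilon^{-1}\|(I-\pi_n)\Sigma(X_{\varepsilon})e_m^{\prime}\|_E^2\to0$, while $D\psi_{\varepsilon}(t_{\varepsilon},X_{\varepsilon})\to DV(t_0,X_0)$ and the remaining terms converge by continuity of $DV$, $\partial_t\varphi_n$ and $D^2\varphi_n$. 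I expect making this limit passage precise to be the real work. Once $V_n$ has been shown to be a supersolution, the comparison principle for \eqref{intro:finite_dimensional_HJB} closes the argument as indicated above.
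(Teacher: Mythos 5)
Your strategy coincides with the paper's: reduce to showing that $V_n$ is a viscosity supersolution of \eqref{intro:finite_dimensional_HJB}, lift the finite dimensional test function through the orthogonal projection $P_n$ onto $E_n$ (your $\pi_n$), penalize the $E_n^{\perp}$ directions, use that $DV(t,\cdot)$ maps $E_n$ into $E_n$ and that $\Sigma(\cdot)e'_m$ maps $E_n$ into $E_n$ with $\Sigma$ Lipschitz to tame the penalization, match the second order term with $\mathrm{Tr}(A_n D^2\varphi_n)$ and $\tilde H$ with the averaged Hamiltonian, and conclude by comparison. Those identities and the reason the penalization's contribution to the second order term is small are exactly the paper's computations.

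The gap is in the step you yourself call ``the real work''. You propose to locate a \emph{near}-minimizer $(t_\varepsilon,X_\varepsilon)$ of $V-\psi_\varepsilon$ and then ``apply the supersolution inequality for $V$ at $(t_\varepsilon,X_\varepsilon)$''. The viscosity definition only allows testing at a point where $V$ minus a $C^{1,2}$ function attains an actual (local) minimum; in $E$ closed bounded sets are not compact, so the infimum of $V-\psi_\varepsilon$ over a ball need not be attained, and a near-minimizer gives you no inequality at all. The paper fixes this with the Ekeland--Lebourg smooth variational principle: a small linear perturbation $at+\langle Z,X\rangle_E$ is added so that the perturbed function attains a strict minimum on $K_\delta=\{\|(t-t_0,P_nX)\|_{\mathbb{R}\times E}\le\delta,\ \|P_n^{\perp}X\|_E\le\delta^2\}$, and the continuity of $DV$ together with $P_n^{\perp}DV(t_0,X_0)=0$ yields the boundary lower bound $\varepsilon\delta^2-\rho(\delta)\delta^2$ which forces that minimum to be interior, so the viscosity inequality may legitimately be invoked there. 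Relatedly, your assertion that $\varepsilon^{-1}\|(I-P_n)\Sigma(X_\varepsilon)e'_m\|_E^2\to 0$ needs $\|Z_\varepsilon\|_E^2=o(\varepsilon)$, which does not follow from $Z_\varepsilon\to0$ alone; it must be extracted from the penalization (comparing the value at $(t_\varepsilon,X_\varepsilon)$ with that at $(t_0,X_0)$ and using $V(t,X)-V(t,P_nX)\ge-\rho(\cdot)\,\|P_n^{\perp}X\|_E$, again from continuity of $DV$), or, as the paper does, sidestepped by constraining $\|P_n^{\perp}X\|_E\le\delta^2$ and taking the penalization coefficient $\varepsilon/\delta^2$, which bounds the offending term by $C\varepsilon$. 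With these two repairs --- which are precisely the devices in the paper's proof --- your argument closes; without them it does not.
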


\begin{proof}
	Let $\varphi_n\in C^{1,2}((0,T) \times (\mathbb{R}^d)^n)$ and let $V_n-\varphi_n$ have a global minimum at $(t_0,\mathbf{x}_0) \in (0,T) \times (\mathbb{R}^d)^n$. In order to simplify notation, let us assume without loss of generality that $\mathbf{x}_0 = \mathbf{0}$, $(V_n-\varphi_n)(t_0,\mathbf{0})=0$.
	
	Let $f_i:= \sqrt{n} \mathbf{1}_{A^n_i}$, $i=1,\dots,n$, and extend this orthonormal set in $L^2(\Omega)$ to an orthonormal basis denoted by $(f_i)_{i\in \mathbb{N}}$. Let $F_n \subset L^2(\Omega)$ be the subspace spanned by $f_i$, $i=1,\dots n$, and let $E_n := F_n \otimes \mathbb{R}^d \subset E$. Note that $E_n$ is $nd$-dimensional, where $d\in \mathbb{N}$ is fixed. Let $E_n^{\perp}$ denote its orthogonal complement, i.e., $E=E_n\oplus E_n^{\perp}$. Moreover, let $P_n$ denote the orthogonal projection onto $E_n$ and let $P_n^{\perp} := I - P_n$. Each element $X\in E$ can be written as
	\begin{equation}
		X = P_n X + P_n^{\perp} X = \sum_{i=1}^n y_i f_i + \sum_{i=n+1}^{\infty} y_i f_i = \sum_{i=1}^n \sum_{k=1}^d y_i^k f_i e_k + \sum_{i=n+1}^{\infty} \sum_{k=1}^d y_i^k f_i e_k,
	\end{equation}
	where $y_i = (y_i^1,\dots, y_i^d) \in \mathbb{R}^d$, $i\geq 1$. Let us denote $\mathbf{y}=(y_1,\dots,y_n) \in (\mathbb{R}^d)^n$ and $\bar{\mathbf{y}} = (y_i)_{i\geq n+1}$. 
	For $(t,X) \in [0,T]\times E$, note that
	\begin{equation}
		\begin{split}
			V(t,P_nX) = V\left ( t, \sum_{i=1}^n y_i f_i \right ) = V \left ( t, \sum_{i=1}^n \sqrt{n} y_i \mathbf{1}_{A^n_i} \right ) = V_n(t, \sqrt{n} \mathbf{y}).
		\end{split}
	\end{equation}
	In particular, for $\mathbf{x} \in (\mathbb{R}^d)^n$, and $X^{\mathbf{x}}_n = \sum_{i=1}^n x_i \mathbf{1}_{A^n_i}$, the coefficients $\mathbf{y}$ in the basis representation are given by $\mathbf{y}= \mathbf{x}/\sqrt{n}$ and $\bar{\mathbf{y}}=\mathbf{0}$. We note that for $X\in E_n$, we have almost everywhere $DV(t,X)(\omega)=\partial_\mu \mathcal{V}(t,X_{\texttt{\#}} \mathcal{L}^1)(X(\omega))$, thus $DV(t,X)\in E_n$. 
	
	Now, we define the function $\varphi:(0,T)\times E \to \mathbb{R}$ by
	\begin{equation}
		\varphi(t,X) := \varphi_n(t,\sqrt{n}\mathbf{y}).
	\end{equation}
	Let $\delta<\min(t_0, T-t_0)$. Consider the function
	\begin{equation}\label{test_function}
		(0,T)\times E \ni (t,X) \mapsto V(t,X) - \varphi(t,X) + \varepsilon((t-t_0)^2+\|P_n X\|_E^2) + \frac{\varepsilon}{\delta^2} \|P_n^{\perp}X\|_E^2.
	\end{equation}
	Note that for $(t,X)\in (0,T)\times E$ such that $\|(t-t_0,P_n X)\|_{\mathbb{R} \times E} = \delta$ and $\|P_n^{\perp}X\|_E \leq \delta^2$, we have
	\begin{equation}
		\begin{split}
			&V(t,X) - \varphi(t,X) + \varepsilon((t-t_0)^2+\|P_n X\|_E^2) + \frac{\varepsilon}{\delta^2} \|P^{\perp}_n X\|_E^2\\
			&= V_n(t,\sqrt{n}\mathbf{y}) - \varphi_n(t,\sqrt{n}\mathbf{y}) + \varepsilon((t-t_0)^2+\|P_n X\|_E^2)\\
			&\quad + \frac{\varepsilon}{\delta^2} \|P^{\perp}_n X\|_E^2 + V(t,X) - V_n(t,\sqrt{n}\mathbf{y})\\
			&\geq \varepsilon \delta^2 + V(t,X) - V(t,P_n X).
		\end{split}
	\end{equation}
	Since $DV$ is continuous and $P_n^{\perp}DV(t_0,0)=0$, we have
	\begin{equation}
		| V(t,X) - V(t,P_n X) | \leq \rho(\|(t-t_0,X)\|_{\mathbb{R}\times E}) \|P_n^{\perp} X \|_E \leq \rho(\delta) \delta^2
	\end{equation}
	for some modulus of continuity $\rho$. Thus, we have
	\begin{equation}
		V(t,X) - \varphi(t,X) + \varepsilon((t-t_0)^2+\|P_n X\|_E^2) + \frac{\varepsilon}{\delta^2} \|P^{\perp}_n X\|_E^2 \geq \varepsilon \delta^2 - \rho(\delta) \delta^2.
	\end{equation}
	Now let us consider $(t,X)\in (0,T)\times E$ such that $\|(t-t_0,P_n X)\|_{\mathbb{R}\times E} \leq \delta$ and $\|P_n^{\perp} X \|_E=\delta^2$. We have
	\begin{equation}
		\begin{split}
			&V(t,X) - \varphi(t,X) + \varepsilon((t-t_0)^2+\|P_n X\|_E^2) + \frac{\varepsilon}{\delta^2} \|P^{\perp}_n X\|_E^2\\
			&= V_n(t,\sqrt{n}\mathbf{y}) - \varphi_n(t,\sqrt{n}\mathbf{y}) + \varepsilon((t-t_0)^2+\|P_n X\|_E^2)\\
			&\quad + \frac{\varepsilon}{\delta^2} \|P^{\perp}_n X\|_E^2 + V(t,X) - V_n(t,\sqrt{n}\mathbf{y})\\
			&\geq \varepsilon\delta^2 + V(t,X) - V(t,P_n X)
			\geq \varepsilon\delta^2 - \rho(\delta)\delta^2.
		\end{split}
	\end{equation}
	Thus, for any $0<\varepsilon<1$, there is a positive
	$\delta=\delta_\varepsilon<\varepsilon$ sufficiently small such that we have
	\begin{equation}
		V(t,X) - \varphi(t,X) + \varepsilon((t-t_0)^2+\|P_n X\|_E^2) + \frac{\varepsilon}{\delta^2} \|P^{\perp}_n X\|_E^2 \geq \frac{\varepsilon\delta^2}{2}=:\gamma 
	\end{equation}
	on the boundary of the set
	\begin{equation}
		K_{\delta} := \{ (t,X) \in \mathbb{R} \times E : \|(t-t_0,P_n X)\|_{\mathbb{R}\times E} \leq \delta, \|P_n^{\perp} X \|_E \leq \delta^2 \}.
	\end{equation}
	By the Ekeland--Lebourg Theorem, see e.g. \cite[Theorem 3.25]{fabbri_gozzi_swiech_2017}, we know that there are $a\in \mathbb{R}$ and $Z\in E$ with $\|(a,Z)\|_{\mathbb{R}\times E} < \gamma/(4\delta)$ such that the function
	\begin{equation}
		(t,X) \mapsto V(t,X) - \varphi(t,X) + \varepsilon((t-t_0)^2+\|P_n X\|_E^2) + \frac{\varepsilon}{\delta^2} \|P^{\perp}_n X\|_E^2 + at + \langle Z,X \rangle_E
	\end{equation}
	attains a strict minimum over $K_{\delta}$ at some point $(t_{\delta},X_{\delta}) \in K_{\delta}$. Note that
	\begin{equation}
		|at + \langle Z,X \rangle_E| \leq \|(a,Z)\|_{\mathbb{R}\times E} \|(t,X)\|_{\mathbb{R}\times E} < \frac{\gamma}{2}
	\end{equation}
	for all $(t,X)\in K_{\delta}$. Thus,
	\begin{equation}
		V(t,X) - \varphi(t,X) + \varepsilon((t-t_0)^2+\|P_n X\|_E^2) + \frac{\varepsilon}{\delta^2} \|P^{\perp}_n X\|_E^2 + at + \langle Z,X \rangle_E \geq \frac{\gamma}{2}
	\end{equation}
	for all $(t,X) \in \partial K_{\delta}$. Since $(V - \varphi)(t_0,0) = 0$, it follows that $(t_{\delta},X_{\delta})$ must be in the interior of $K_{\delta}$. Therefore, since $V$ is a viscosity solution of \eqref{intro:lifted_HJB}, we have
	\begin{equation}\label{supersolution_inequality}
		\begin{split}
			&\partial_t \varphi(t_{\delta},X_{\delta}) - 2\varepsilon (t_{\delta}-t_0) - a\\
			&+ \frac12 \sum_{m=1}^{d^{\prime}} \langle ( D^2 \varphi(t_{\delta},X_{\delta}) - 2 \varepsilon P_n - 2\frac{\varepsilon}{\delta^2} P_n^{\perp}) \Sigma(X_{\delta}) e^{\prime}_m, \Sigma(X_{\delta}) e^{\prime}_m \rangle_E\\
			& - \tilde{H}(X_{\delta},(X_{\delta})_{\texttt{\#}} \mathcal{L}^1, D\varphi(t_{\delta},X_{\delta}) - 2 \varepsilon P_n X_{\delta} - 2 \frac{\varepsilon}{\delta^2} P_n^{\perp} X_{\delta} - Z) \leq 0.
		\end{split}
	\end{equation}
	
	Now, note that for any $X \in E$, we have
	\begin{equation}
		\Sigma(P_n X)(\omega) = \sigma \left ( \sum_{i=1}^n y_i f_i(\omega),(P_n X)_{\texttt{\#}} \mathcal{L}^1 \right ) = \sum_{i=1}^n \sigma \left ( \sqrt{n} y_i,(P_n X)_{\texttt{\#}} \mathcal{L}^1 \right ) \mathbf{1}_{A^n_i}(\omega),
	\end{equation}
	which shows that $\Sigma(P_n X) e^{\prime}_m \in E_n$. Therefore, we have
	\begin{equation}
		\begin{split}
			&| \langle \frac{\varepsilon}{\delta^2} P_n^{\perp} \Sigma(X_{\delta})e^{\prime}_m, \Sigma(X_{\delta})e^{\prime}_m \rangle_E |\\
			&= | \langle \frac{\varepsilon}{\delta^2} P_n^{\perp} (\Sigma(X_{\delta})- \Sigma(P_n X_{\delta})) e^{\prime}_m, \Sigma(X_{\delta})e^{\prime}_m\rangle_E |\\
			&\leq \frac{C\varepsilon}{\delta^2} \| \Sigma(X_{\delta}) - \Sigma(P_n X_{\delta}) \|_{L_2(\mathbb{R}^{d^{\prime}},E)} \leq C \frac{\varepsilon}{\delta^2} \| P^{\perp}_n X_{\delta} \|_E\leq C\varepsilon.
		\end{split}
	\end{equation}
	Hence, taking the limit $\varepsilon \to 0$ in inequality \eqref{supersolution_inequality}, we obtain
	\begin{equation}\label{supersolution_inequality_2}
		\partial_t \varphi(t_0,0) + \frac12 \sum_{m=1}^{d^{\prime}} \langle D^2 \varphi(t_0,0) \Sigma(0) e^{\prime}_m, \Sigma(0) e^{\prime}_m \rangle_E - \tilde{H}(0,0_{\texttt{\#}} \mathcal{L}^1,D\varphi(t_0,0)) \leq 0.
	\end{equation}
	Recalling that $(e_k)_{k=1,\dots,d}$ denotes the standard basis of $\mathbb{R}^d$, we find that for any $(t,X)\in (0,T]\times E$, we have
	\begin{equation}\label{derivative_varphi}
		D\varphi(t,X) = \sum_{i=1}^n \sum_{k=1}^d \sqrt{n} D_{x_i^k} \varphi_n( t, \sqrt{n} \mathbf{y} ) f_i \otimes e_k
	\end{equation}
	and
	\begin{equation}
		D^2 \varphi(t,X) = \sum_{i,j=1}^n \sum_{k,l=1}^d n D_{x_i^k x_j^l} \varphi_n(t,\sqrt{n} \mathbf{y}) \langle f_i\otimes e_k, \cdot \rangle_E \langle f_j\otimes e_l ,\cdot \rangle_E.
	\end{equation}
	Therefore, we obtain for the second order term in \eqref{supersolution_inequality_2}
	\begin{equation}
		\begin{split}
			&\sum_{m=1}^{d^{\prime}} \langle D^2 \varphi(t_0,0) \Sigma(0) e^{\prime}_m, \Sigma(0) e^{\prime}_m \rangle_E\\
			&= \sum_{m=1}^{d^{\prime}} \sum_{i,j=1}^n \sum_{k,l=1}^d n D_{x^k_ix^l_j} \varphi_n(t_0,\mathbf{0}) \langle f_i\otimes e_k, \Sigma(0) e^{\prime}_m \rangle_E \langle f_j\otimes e_l, \Sigma(0) e^{\prime}_m \rangle_E.
		\end{split}
	\end{equation}
	Noticing that
	\begin{equation}
		\langle f_i\otimes e_k, \Sigma(0) e^{\prime}_m \rangle_E = \frac{1}{\sqrt{n}} \sigma_{km}(\mathbf{0},\mu_{\mathbf{0}}),
	\end{equation}
	we obtain
	\begin{equation}
		\begin{split}
			\sum_{m=1}^{d^{\prime}} \langle D^2 \varphi(t_0,0) \Sigma(0) e^{\prime}_m, \Sigma(0) e^{\prime}_m \rangle_E &= \sum_{m=1}^{d^{\prime}} \sum_{i,j=1}^n \sum_{k,l=1}^d D_{x^k_ix^l_j} \varphi_n(t_0,\mathbf{0}) \sigma_{km}(\mathbf{0},\mu_{\mathbf{0}}) \sigma_{lm}(\mathbf{0},\mu_{\mathbf{0}})\\
			&= \text{Tr} (A_n(\mathbf{0},\mu_{\mathbf{0}}) D^2 \varphi_n(t_0,\mathbf{0})).
		\end{split}
	\end{equation}
	Recalling the definition of $\tilde{H}$ given in \eqref{definition_H_tilde} and using \eqref{derivative_varphi}, a direct computation shows that
	\begin{equation}
		\tilde{H}(0,0_{\texttt{\#}} \mathcal{L}^1,D\varphi(t_0,0)) = \frac{1}{n} \sum_{i=1}^n H(0,\mu_{\mathbf{0}},nD_{x_i}\varphi_n(t_0,\mathbf{0})).
	\end{equation}
	
	Thus, we obtain from \eqref{supersolution_inequality_2} 
	\begin{equation}
		\partial_t \varphi_n(t_0,\mathbf{0}) + \frac12 \text{Tr} (A_n(\mathbf{0},\mu_{\mathbf{0}}) D^2 \varphi_n(t_0,\mathbf{0})) - \frac{1}{n} \sum_{i=1}^n H(0,\mu_{\mathbf{0}},nD_{x_i}\varphi_n(t_0,\mathbf{0})) \leq 0,
	\end{equation}
	which shows that $V_n$ is a viscosity supersolution of equation \eqref{intro:finite_dimensional_HJB}. Since $u_n$ is a viscosity solution of \eqref{intro:finite_dimensional_HJB}, the result now follows from the comparison principle in the class sub/supersolutions which are Lipschitz continuous in the spatial variable.
\end{proof}

Combining Propositions \ref{proposition:V_n_geq_u_n} and \ref{proposition:V_n_leq_u_n}, we obtain the following result.

\begin{theorem}\label{theorem:projection_C11}
	Let Assumptions \ref{Assumption_b_sigma_lipschitz} and \ref{Assumption_running_terminal_cost} be satisfied. Moreover, let $DV$ be continuous on $[0,T]\times E$. Then, it holds
	\begin{equation}
		V_n(t,\mathbf{x}) = u_n(t,\mathbf{x})
	\end{equation}
	for all $(t,\mathbf{x}) \in [0,T] \times (\mathbb{R}^d)^n$.
\end{theorem}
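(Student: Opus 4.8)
Since Propositions \ref{proposition:V_n_leq_u_n} and \ref{proposition:V_n_geq_u_n} together already give the two inequalities $V_n \le u_n$ and $V_n \ge u_n$, the theorem follows at once by combining them; the substantive work lies in those two propositions, and I recap it here. For $V_n \le u_n$ the argument is purely probabilistic and needs no regularity of $V$: given any $\mathbf{a}(\cdot) \in \mathcal{A}_t^n$, lift it to the piecewise constant control $a^n(\cdot) = \sum_{i=1}^n a_i(\cdot)\mathbf{1}_{A_i^n} \in \mathcal{A}_t$; by uniqueness for the lifted state equation \eqref{Lifted_State_Equation} the solution started at $X_n^{\mathbf{x}}$ under $a^n(\cdot)$ coincides a.e.\ with the lift $\sum_i X_i(s)\mathbf{1}_{A_i^n}$ of the $n$-particle trajectory, so $J(t,X_n^{\mathbf{x}};a^n(\cdot)) = J_n(t,\mathbf{x};\mathbf{a}(\cdot))$ (Lemma \ref{lemma_subsolution}), and since $V = U$ by Theorem \ref{theorem:convergence}, taking the infimum over $\mathbf{a}(\cdot)$ yields $V_n(t,\mathbf{x}) = V(t,X_n^{\mathbf{x}}) = U(t,X_n^{\mathbf{x}}) \le u_n(t,\mathbf{x})$.

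For the reverse inequality the plan is to show that $V_n$ is a viscosity supersolution of the finite dimensional HJB equation \eqref{intro:finite_dimensional_HJB} and then invoke the comparison principle, using that $u_n$ is its unique viscosity solution and that $V_n$ is Lipschitz in the spatial variable with the correct terminal data. To establish the supersolution property, take $\varphi_n \in C^{1,2}$ with $V_n - \varphi_n$ having a global minimum at $(t_0,\mathbf{x}_0)$; identifying $E_n$ with $(\mathbb{R}^d)^n$ through the orthonormal system $f_i = \sqrt{n}\,\mathbf{1}_{A_i^n}$, lift $\varphi_n$ to $\varphi(t,X) := \varphi_n(t,\sqrt{n}\,\mathbf{y})$, where $\mathbf{y}$ collects the $E_n$-coordinates of $X$, and test $V$ against $\varphi$ plus the penalization $\varepsilon\big((t-t_0)^2 + \|P_nX\|_E^2\big) + \frac{\varepsilon}{\delta^2}\|P_n^{\perp} X\|_E^2$. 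For $\delta$ small this perturbed functional strictly exceeds its value at $(t_0,\mathbf{x}_0)$ on the boundary of a thin cylinder $K_\delta$ around $(t_0,\mathbf{x}_0)$; here one uses the continuity of $DV$ together with the structural fact that $P_n^{\perp}DV(t,X) = 0$ for $X \in E_n$, since then $DV(t,X)(\omega) = \partial_{\mu}\mathcal{V}(t,X_{\texttt{\#}}\mathcal{L}^1)(X(\omega))$ is $E_n$-valued. Apply the Ekeland--Lebourg theorem to get a genuine interior minimizer $(t_\delta,X_\delta)$, write the viscosity supersolution inequality for $V$ there, estimate the $E_n^{\perp}$-part of the second order term by $\frac{\varepsilon}{\delta^2}\|\Sigma(X_\delta)-\Sigma(P_nX_\delta)\|_{L_2(\mathbb{R}^{d^{\prime}},E)} \le C\frac{\varepsilon}{\delta^2}\|P_n^{\perp}X_\delta\|_E \le C\varepsilon$, let $\varepsilon \to 0$, and recognize the limiting inequality --- after computing $D\varphi$, $D^2\varphi$ and identifying $\text{Tr}(A_n D^2\varphi_n)$ and $\frac1n\sum_i H(\cdot,\cdot,nD_{x_i}\varphi_n)$ --- as exactly the finite dimensional supersolution inequality at $(t_0,\mathbf{x}_0)$.

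The genuinely delicate step is this last one (Proposition \ref{proposition:V_n_geq_u_n}): descending from the infinite dimensional viscosity inequality to the $n$-dimensional one while quarantining the infinitely many superfluous directions in $E_n^{\perp}$. The continuity of $DV$ and the fact that $\Sigma$ maps $E_n$ into itself are precisely what make the localization on $K_\delta$ and the bound on the second order term work; I emphasize that full $C^{1,1}$ regularity of $V$ is not required --- only continuity of $DV$, which by Remark \ref{remark_joint_continuity}(ii) is guaranteed under the hypotheses of Section \ref{section:C11_regularity}. The remaining pieces, namely $V_n \le u_n$ and the final combination of the two inequalities, are routine.
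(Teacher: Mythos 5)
Your proposal is correct and follows essentially the same route as the paper: the theorem is obtained by combining Propositions \ref{proposition:V_n_leq_u_n} and \ref{proposition:V_n_geq_u_n}, the first via the control-lifting identity of Lemma \ref{lemma_subsolution} together with $V=U$, and the second by showing $V_n$ is a viscosity supersolution of \eqref{intro:finite_dimensional_HJB} through the lifted test function, the $\varepsilon$-penalization of the $E_n^{\perp}$-directions, the Ekeland--Lebourg perturbation, and the comparison principle. Your recap of the key points (continuity of $DV$ with $P_n^{\perp}DV=0$ on $E_n$, the bound $\frac{\varepsilon}{\delta^2}\|P_n^{\perp}X_\delta\|_E\leq C\varepsilon$ for the second order term, and the identification of the finite dimensional Hamiltonian and trace terms) matches the paper's argument.
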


\section{Lifting and Projection of Optimal Controls}\label{section:feedbacks}

In this section, we are going to show that if $DV$ is continuous (or if $V(t,\cdot)\in C^{1,1}(E)$ for the case of optimal feedback controls), optimal (feedback) controls of the finite dimensional control problems correspond to optimal (feedback) controls of the lifted infinite dimensional control problem started at the corresponding initial condition. Conversely, we will show that piecewise constant optimal (feedback) controls of the infinite dimensional control problem project onto optimal (feedback) controls of the finite dimensional control problems.
Throughout this section, we assume that the assumptions of Theorem \ref{theorem:projection_C11} are satisfied.

\subsection{Lifting of Optimal Controls}

Let $n\geq 1$ and let $\mathbf{a}^{\ast}(\cdot) = (a_1^{\ast}(\cdot),\dots,a_n^{\ast}(\cdot))$ be an optimal control of the finite dimensional problem, i.e.,
\begin{equation}
	\begin{split}
		u_n(t,\mathbf{x}) &= J_n(t,\mathbf{x};\mathbf{a}^{\ast}(\cdot))\\
		&:= \mathbb{E} \left [ \int_t^T \left ( \frac1n \sum_{i=1}^n \left ( l_1(X^{\ast}_i(s),\mu_{\mathbf{X}^{\ast}(s)}) + l_2(a^{\ast}_i(s)) \right ) \right ) \mathrm{d}s + \mathcal{U}_T(\mu_{\mathbf{X}^{\ast}(T)}) \right ],
	\end{split}
\end{equation}
where $\mathbf{X}^{\ast}(s) = (X_1^{\ast}(s),\dots,X_n^{\ast}(s))$ is the solution of the system of SDEs
\begin{equation}
	\begin{cases}
		\mathrm{d}X^{\ast}_i(s) = [-a^{\ast}_i(s) + b(X^{\ast}_i(s),\mu_{\mathbf{X}^{\ast}(s)})] \mathrm{d}s + \sigma(X^{\ast}_i(s),\mu_{\mathbf{X}^{\ast}(s)}) \mathrm{d}W(s)\\
		X^{\ast}_i(t) = x_i\in \mathbb{R}^d,
	\end{cases}
\end{equation}
$i=1,\dots,n$. By Lemma \ref{lemma_subsolution}, we obtain
\begin{equation}
	u_n(t,\mathbf{x}) = J_n(t,\mathbf{x};\mathbf{a}^{\ast}(\cdot)) = J(t,X^{\mathbf{x}}_n ; a^{\ast,n}(\cdot)),
\end{equation}
where $a^{\ast,n}(\cdot) = \sum_{i=1}^n a_i(\cdot) \mathbf{1}_{A^n_i}$. Moreover, due to Theorem \ref{theorem:projection_C11}, we have
\begin{equation}
	u_n(t,\mathbf{x}) = V_n(t,\mathbf{x}) = V\left (t, \sum_{i=1}^n x_i \mathbf{1}_{A^n_i} \right ).
\end{equation}
Therefore, the optimal control $\mathbf{a}^{\ast}(\cdot)$ lifts to an optimal control $a^{\ast,n}(\cdot)$ of the infinite dimensional problem started at $(t,\sum_{i=1}^n x_i \mathbf{1}_{A^n_i})$.

\begin{remark}\label{remark:lifting_optimal_feedbacks}
	If in addition to the assumptions of Theorem \ref{theorem:projection_C11}, $\nu >0$ in Assumption \ref{Assumption_running_terminal_cost}(ii), $V(t,\cdot)\in C^{1,1}(E)$ for every $t\in [0,T]$, and the semiconcavity and semiconvexity constants of $V(t,\cdot)$ are independent of $t\in [0,T]$, then it was shown in \cite[Theorem 4.9, Example 4.10]{defeo_swiech_wessels_2023} and \cite[Proposition 7.1]{mayorga_swiech_2023} that $\mathbf{a}^{\ast}(\cdot) = (a^{\ast}_1(\cdot),\dots,a^{\ast}_n(\cdot))$ defined by
	\begin{equation}
		a_i^{\ast}(s):=(Dl_2)^{-1}(nD_{x_i}u_n(s,{\mathbf{X}^{\ast}(s)}))
	\end{equation}
	is an optimal feedback control for the finite dimensional control problem. Here $\mathbf{X}^{\ast}(s)=(X_1^{\ast}(s),\dots,X_n^{\ast}(s))$ is the solution of the system
	\begin{equation}
		\begin{cases}
			\mathrm{d}X^{\ast}_i(s) = [-(Dl_2)^{-1}(nD_{x_i}u_n(s,{\mathbf{X}^{\ast}(s)})) + b(X_i^{\ast}(s),\mu_{\mathbf{X}^{\ast}(s)})] \mathrm{d}s\\
			\qquad\qquad + \sigma(X^{\ast}_i(s),\mu_{\mathbf{X}^{\ast}(s)}) \mathrm{d}W(s)\\
			X^{\ast}_i(t) = x_i,
		\end{cases}
	\end{equation}
	$i=1,...,n$. Now, consider the lifted control
	\begin{equation}\label{lifted_control}
		a^{\ast,n}(s) = \sum_{i=1}^n a^{\ast}_i(s) \mathbf{1}_{A^n_i}.
	\end{equation}
	As in the proof of Lemma \ref{lemma_subsolution}, we see that the solution $X^{\ast,n}(\cdot)$ of the lifted state equation with initial condition $X^{\mathbf{x}}_n$ and control $a^{\ast,n}(\cdot)$ coincides with the lift of the solution $\mathbf{X}^{\ast}(\cdot)$ of the finite dimensional state equation with control $\mathbf{a}^{\ast}(\cdot)$. In particular, $X^{\ast,n}(\cdot)$ is piecewise constant in $\Omega$. Therefore, we observe that
	\begin{equation}
		\begin{split}
			a^{\ast,n}(s) &=\sum_{i=1}^n(Dl_2)^{-1}(nD_{x_i}u_n(s,{\mathbf{X}^{\ast}(s)}))\mathbf{1}_{A^n_i}\\
			&= \sum_{i=1}^n(Dl_2)^{-1}\left(n\int_{A^n_i}DV(s, X^{\ast,n}(s))(\omega) \mathrm{d}\omega\right)\mathbf{1}_{A^n_i}\\
			&=\sum_{i=1}^n (Dl_2)^{-1} \left(n\int_{A^n_i}\partial_\mu \mathcal{V}(s, X^{\ast,n}_{\texttt{\#}} \mathcal{L}^1)( X^{\ast,n}(s)(\omega))
			\mathrm{d}\omega \right)\mathbf{1}_{A^n_i}\\
			& =\sum_{i=1}^n(Dl_2)^{-1}\left(\partial _\mu\mathcal{V}(s, X^{\ast,n}_{\texttt{\#}} \mathcal{L}^1)( X^{\ast,n}(s)) \right)\mathbf{1}_{A^n_i} \\
			& =(DL_2)^{-1}(DV(s, X^{\ast,n}(s))).
		\end{split}
	\end{equation}
	In the last step, we used the fact that $(DL_2)^{-1}(Y)(\omega)=(Dl_2)^{-1}(Y(\omega))$, for $Y\in E$. Applying the results of \cite[Theorem 4.10, Example 4.11]{defeo_swiech_wessels_2023} and \cite[Proposition 7.1]{mayorga_swiech_2023} to the infinite dimensional control problem, we obtain that $(DL_2)^{-1}(DV(s, X^{\ast,n}(s)))$ is an optimal feedback control of the lifted problem started at $(t,\sum_{i=1}^n x_i \mathbf{1}_{A^n_i})$. Therefore, we have shown that the optimal feedback control of the finite dimensional problem lifts to the optimal feedback control of the corresponding lifted control problem.
\end{remark}

\subsection{Projection of Optimal Controls}

Now, let $a^{\ast,n}(\cdot)$ be a piecewise constant optimal control of the infinite dimensional problem started at $(t,X^{\mathbf{x}}_n)$, $\mathbf{x}\in (\mathbb{R}^d)^n$. More precisely, let $a^{\ast,n}(s) = \sum_{i=1}^n a_i^{\ast}(s) \mathbf{1}_{A^n_i}$ for some $a_i^{\ast}(\cdot)$ taking values in $\mathbb{R}^d$, $i=1,\dots,n$, and
\begin{equation}
	V(t,X^{\mathbf{x}}_n) = J(t,X^{\mathbf{x}}_n;a^{\ast,n}(\cdot)) = \mathbb{E} \left [ \int_t^T \left ( L_1(X^{\ast}(s)) + L_2(a^{\ast,n}(s)) \right ) \mathrm{d}s + U_T(X^{\ast}(T)) \right ],
\end{equation}
where
\begin{equation}
	\begin{cases}
		\mathrm{d}X^{\ast}(s) = [-a^{\ast,n}(s) + B(X^{\ast}(s))] \mathrm{d}s + \Sigma(X^{\ast}(s)) \mathrm{d}W(s)\\
		X^{\ast}(t) = X^{\mathbf{x}}_n \in E_n.
	\end{cases}
\end{equation}
Note that in this case, $X^{\ast}(s) \in E_n$ for all $s\in [t,T]$; recall \eqref{E_n} for the definition of $E_n$. Let $\mathbf{a}^{\ast}(\cdot) = (a_1^{\ast}(\cdot),\dots,a_n^{\ast}(\cdot))$ be the finite dimensional projection of $a^{\ast,n}(\cdot)$. Then, we obtain from Lemma \ref{lemma_subsolution} that
\begin{equation}
	V(t,X^{\mathbf{x}}_n) = J(t,X^{\mathbf{x}}_n;a^{\ast,n}(\cdot)) = J_n(t,\mathbf{x};\mathbf{a}^{\ast}(\cdot)).
\end{equation}
Moreover, by Theorem \ref{theorem:projection_C11}, we have
\begin{equation}
	V(t,X^{\mathbf{x}}_n) = V_n(t,\mathbf{x}) = u_n(t,\mathbf{x}).
\end{equation}
Therefore, the piecewise constant optimal control $a^{\ast,n}(\cdot)$ of the infinite dimensional control problem started at $(t,X^{\mathbf{x}}_n)$ projects to an optimal control of the finite dimensional control problem started at $(t,\mathbf{x})$.

\begin{remark}
	As mentioned in Remark \ref{remark:lifting_optimal_feedbacks}, under its stated assumptions, an optimal control for the infinite dimensional control problem is given by
	\begin{equation}
		a^{\ast}(s)=(DL_2)^{-1}(DV(s,X^{\ast}(s))),
	\end{equation}
	where $X^{\ast}(\cdot)$ is the solution of the infinite dimensional state equation with control $a^{\ast}(\cdot)$. Note that $a^{\ast}(\cdot)$ is piecewise constant if $X^{\ast}(s)$ is piecewise constant. Indeed, for every $X\in E$, we have
	\begin{equation}
		DV(t,X)(\omega) = \partial_{\mu} \mathcal{V}(t,X_{\texttt{\#}}\mathcal{L}^1)(X(\omega)).
	\end{equation}
	Thus, $(DL_2)^{-1}(DV(s,X)) \in E_n$ if $X\in E_n$. Therefore, in this case the same calculation as in Remark \ref{remark:lifting_optimal_feedbacks} shows that the optimal feedback control projects onto the optimal feedback control of the finite dimensional control problem.
\end{remark}

\section{Projection of \texorpdfstring{$V$}{V} without Differentiability of \texorpdfstring{$V$}{V} in the Linear Case}\label{section:projection_no_C11}

In Section \ref{section:projection_C11}, we established the projection of $V$ onto $u_n$ when $DV$ was continuous. In this section, we are going to relax the assumptions on the coefficients of the cost functional. Therefore, we may lose the above regularity of the value function. Nevertheless, we are still able to prove the projection property in the case of a linear state equation. This will be done using approximations in the Wasserstein space.

There are different ways to regularize functions in the Wasserstein space, see e.g. \mbox{\cite[Theorems 4.2, 4.4]{cosso_martini_2023}}, \cite[Section 4]{daudin_delarue_jackson_2024}, \cite[Lemmas 2.11, 4.2]{daudin_jackson_seeger_2023}. We are going to use the method introduced in \cite{cosso_martini_2023}.

\subsection{Approximation in the Wasserstein Space}

We are going to approximate a convex and Lipschitz continuous function $\phi:\mathbb{R}^d \times \mathcal{P}_2(\mathbb{R}^d) \to \mathbb{R}$ by functions whose lifts are in $C^{1,1}(\mathbb{R}^d\times E)$. The method uses a similar approximation as the one used in \cite[Theorems 4.2]{cosso_martini_2023}. However, since the approximation in \cite[Theorems 4.2]{cosso_martini_2023} does not preserve the convexity of the lift of $\phi$, we need to make some modifications. Moreover, we will show that the approximation preserves some other properties of $\phi$.

\begin{theorem}\label{Theorem_Approximation_Wasserstein_Space}
	Let $r\in[1,2)$, and let $\phi:\mathbb{R}^d \times \mathcal{P}_2(\mathbb{R}^d) \to \mathbb{R}$ be Lipschitz continuous with respect to the $|\,\cdot\,|\times d_r(\cdot,\cdot)$-distance. Then, there is an approximating sequence $(\phi_k)_{k\in \mathbb{N}}$ in $C^{\infty}(\mathbb{R}^d \times \mathcal{P}_2(\mathbb{R}^d))$ with lifts $\tilde{\phi}_k:\mathbb{R}^d \times E \to \mathbb{R}$ defined by $\tilde{\phi}_k(x,X) := \phi_k(x,X_{\texttt{\#}} \mathcal{L}^1)$ satisfying the following properties:
	\begin{enumerate}[label=(\roman*)]
		\item For each $k\in \mathbb{N}$, $\phi_k$ is Lipschitz continuous with respect to the $|\,\cdot\,|\times d_r(\cdot,\cdot)$-distance with Lipschitz constant independent of $k$;
		\item The sequence $(\phi_k)_{k\in\mathbb{N}}$ converges to $\phi$ uniformly on bounded subsets of $\mathbb{R}^d \times \mathcal{P}_2(\mathbb{R}^d)$, i.e.
		\begin{equation}\label{approximation_property_uniform}
			\lim_{k\to\infty} \sup \left \{ \left | \phi_k(x,\mu) - \phi(x,\mu) \right | : (x,\mu) \in B_R(0) \times \mathfrak{M}^2_R \right \}  = 0
		\end{equation}
		for every $R>0$;
		\item $\tilde{\phi}_k \in C^{1,1}(\mathbb{R}^d \times E)$ for each $k\in\mathbb{N}$;
		\item If the lift of $\phi$ is convex, then the lift of $\phi_k$ is convex for each $k\in\mathbb{N}$.
	\end{enumerate} 
\end{theorem}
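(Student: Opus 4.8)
The plan is a double mollification: an ordinary mollification in the state variable $x$, together with a mollification in the measure variable performed \emph{after projecting onto empirical measures}. The reason for routing the measure-smoothing through empirical measures is precisely that this is the step that preserves convexity of the lift, which the smoothing of \cite{cosso_martini_2023} does not; all the other properties then come out in the wash.

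Concretely, I would fix a standard mollifier on $\mathbb{R}^d$ and, for $N\in\mathbb{N}$, set $g^N(x;\mathbf{y}):=\phi\big(x,\tfrac1N\sum_{i=1}^N\delta_{y_i}\big)$ on $\mathbb{R}^d\times(\mathbb{R}^d)^N$. This $g^N$ is symmetric in $\mathbf{y}=(y_1,\dots,y_N)$; it is Lipschitz with respect to the metric $|x-x'|+\big(\tfrac1N\sum_i|y_i-y_i'|^r\big)^{1/r}$ with the \emph{same} constant $L$ as $\phi$, because $d_r\big(\tfrac1N\sum_i\delta_{y_i},\tfrac1N\sum_i\delta_{y_i'}\big)\le\big(\tfrac1N\sum_i|y_i-y_i'|^r\big)^{1/r}$; and, crucially, $g^N(x;\mathbf{y})=\tilde\phi(x,\iota_N\mathbf{y})$ where $\iota_N\mathbf{y}:=\sum_i y_i\mathbf{1}_{A_i^N}$ is \emph{linear} in $\mathbf{y}$, so $g^N$ is convex on $(\mathbb{R}^d)^{N+1}$ whenever the lift $\tilde\phi$ is convex. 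I would then mollify $g^N$ on $(\mathbb{R}^d)^{N+1}$ with a product mollifier $\rho_\delta$ supported in the Euclidean ball $B_\delta(0)$ and symmetric in the last $N$ blocks, obtaining $g^N_\delta:=g^N*\rho_\delta\in C^\infty$, still symmetric in $\mathbf{y}$, convex whenever $g^N$ is, Lipschitz with the same constant $L$ (convolution against a probability kernel leaves the modulus unchanged since the shift cancels in the difference), with globally bounded Hessian ($\|\nabla^2 g^N_\delta\|_\infty\le\|\nabla g^N\|_\infty\|\nabla\rho_\delta\|_1<\infty$), and satisfying $\|g^N_\delta-g^N\|_\infty\le 2L\delta$. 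Finally I would set
\begin{equation*}
	\phi_k(x,\mu):=\int_{(\mathbb{R}^d)^{N_k}}g^{N_k}_{\delta_k}(x;\mathbf{y})\,\mu^{\otimes N_k}(\mathrm{d}\mathbf{y}),
\end{equation*}
with $N_k\to\infty$ and $\delta_k\to0$ to be chosen; its lift is $\tilde\phi_k(x,X)=\int_{(0,1)^{N_k}}g^{N_k}_{\delta_k}\big(x;X(u_1),\dots,X(u_{N_k})\big)\,\mathrm{d}\mathbf{u}$, which is well defined on $\mathbb{R}^d\times E$ by the linear growth of $g^{N_k}_{\delta_k}$ and depends only on $(x,X_{\texttt{\#}}\mathcal{L}^1)$.

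For (i), I would couple $\xi_i\sim\mu$ and $\eta_i\sim\mu'$ i.i.d.\ along an optimal $d_r$-coupling and combine the Lipschitz bound for $g^{N_k}_{\delta_k}$ with Jensen's inequality to get $|\phi_k(x,\mu)-\phi_k(x',\mu')|\le L|x-x'|+L\,d_r(\mu,\mu')$, i.e.\ Lipschitz uniformly in $k$. For (ii), $|\phi_k(x,\mu)-\phi(x,\mu)|\le 2L\delta_k+L\,\mathbb{E}\big[d_r\big(\tfrac1{N_k}\sum_i\delta_{\xi_i},\mu\big)\big]$ with $\xi_i\sim\mu$ i.i.d.; since $r<2$, the standard rate of convergence of empirical measures in $d_r$ (which for an exponent strictly below the moment exponent is controlled by $\mathcal{M}_2(\mu)$ only) makes $\sup_{\mu\in\mathfrak{M}^2_R}\mathbb{E}\big[d_r\big(\tfrac1N\sum_i\delta_{\xi_i},\mu\big)\big]\to0$, so an appropriate choice of $N_k,\delta_k$ yields uniform convergence on $B_R(0)\times\mathfrak{M}^2_R$ for every $R$. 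For (iii), differentiating under the integral and using the global Hessian bound of $g^{N_k}_{\delta_k}$ in a Taylor estimate shows $\tilde\phi_k$ is Fréchet differentiable with globally Lipschitz derivative (the constant depending on $N_k,\delta_k$), hence $\tilde\phi_k\in C^{1,1}(\mathbb{R}^d\times E)$; and $\phi_k\in C^\infty(\mathbb{R}^d\times\mathcal{P}_2(\mathbb{R}^d))$ follows because $\mu\mapsto\int h\,\mathrm{d}\mu^{\otimes N}$ with $h$ smooth is a smooth polynomial functional on $\mathcal{P}_2(\mathbb{R}^d)$ whose $L$-derivatives are again of this type (cf.\ \cite[Section 5]{carmona_delarue_2018}), together with smoothness in $x$. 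For (iv), if $\tilde\phi$ is convex then $g^{N_k}$, hence $g^{N_k}_{\delta_k}$, is convex on $(\mathbb{R}^d)^{N_k+1}$; applying its convexity pointwise in $(u_1,\dots,u_{N_k})$ inside the integral defining $\tilde\phi_k$ and integrating gives $\tilde\phi_k(\lambda x^1+(1-\lambda)x^0,\lambda X^1+(1-\lambda)X^0)\le\lambda\tilde\phi_k(x^1,X^1)+(1-\lambda)\tilde\phi_k(x^0,X^0)$, so the lift is convex.

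I expect the main obstacle to be the regularity claim (iii): one must verify that differentiation under the integral in $\tilde\phi_k$ produces genuine \emph{Fréchet} (not merely Gâteaux) derivatives with Lipschitz dependence on $(x,X)\in\mathbb{R}^d\times E$, which forces one to argue with the integral representation of $\tilde\phi_k$ rather than with the evaluation maps $X\mapsto X(u_i)$, which are unbounded on $E$. A secondary difficulty is the uniformity over $\mathfrak{M}^2_R$ in (ii), which is exactly the reason the hypothesis requires $r<2$ and the closeness is measured against the $\mathcal{M}_2$-sublevel sets.
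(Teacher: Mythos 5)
Your construction is the same as the paper's: mollifying the empirical-measure projection $\phi(x,\tfrac1N\sum_i\delta_{y_i})$ on $(\mathbb{R}^d)^{N+1}$ and then integrating against $\mu^{\otimes N}$ is exactly the paper's $h_{k,1/k}$ and $\phi_k(x,\mu)=\tilde{\mathbb{E}}[h_{k,1/k}(x,\tilde X_1,\dots,\tilde X_k)]$ (with the inessential difference that the paper fixes $N=k$, $\varepsilon=1/k$), and your arguments for (i)--(iv) match the paper's, up to cosmetic variations (a quantitative empirical-measure rate in place of pointwise convergence plus Arzel\`a--Ascoli for (ii), and deducing convexity of the finite-dimensional function from linearity of $\mathbf{y}\mapsto\sum_i y_i\mathbf{1}_{A_i^N}$ rather than the paper's direct computation for (iv)). The proposal is correct.
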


\begin{proof}
	\textit{Construction of $\phi_k$:} Let $\eta:\mathbb{R}^d \to \mathbb{R}$ denote the standard mollifier on $\mathbb{R}^d$, i.e.,
	\begin{equation}
		\begin{split}
			\eta(x) := \begin{cases}
				C \exp \left ( \frac{1}{|x|^2-1} \right )& \text{if } |x|<1\\
				0& \text{if } |x| \geq 1,
			\end{cases}
		\end{split}
	\end{equation}
	where $C>0$ is chosen such that $\int_{\mathbb{R}^d} \eta \mathrm{d}x =1$. For $\varepsilon>0$, we set
	\begin{equation}
		\eta_{\varepsilon}(x) := \frac{1}{\varepsilon^d} \eta\left ( \frac{x}{\varepsilon} \right ),
	\end{equation}
	and for $N\in\mathbb{N}$, we define $h_{N,\varepsilon} : \mathbb{R}^d \times (\mathbb{R}^{d})^N \to \mathbb{R}$ as
	\begin{equation}
		\begin{split}
			&h_{N,\varepsilon}(x_0,x_1,\dots,x_N) \\
			&:= \int_{\mathbb{R}^{d(N+1)}} \phi \left ( x_0-y_0,\frac{1}{N} \sum_{i=1}^N \delta_{x_i-y_i} \right ) \eta_{\varepsilon}(y_0) \cdots \eta_{\varepsilon}(y_N) \mathrm{d}y_0 \cdots \mathrm{d}y_N\\
			&= \int_{\mathbb{R}^{d(N+1)}} \phi \left ( z_0,\frac{1}{N} \sum_{i=1}^N \delta_{z_i} \right ) \eta_{\varepsilon}(x_0-z_0) \cdots \eta_{\varepsilon}(x_N-z_N) \mathrm{d}z_0 \cdots \mathrm{d}z_N,
		\end{split}
	\end{equation}
	where $x_i \in \mathbb{R}^d$, and $z_i:=x_i-y_i$, $i=0,\dots,N$. 
	Moreover, let $\psi_{N,{\varepsilon}}: \mathbb{R}^d \times \mathcal{P}_2(\mathbb{R}^d) \to \mathbb{R}$ be given by
	\begin{equation}
		\psi_{N,{\varepsilon}}(x,\mu) := \tilde{\mathbb{E}} \left [ h_{N,{\varepsilon}}(x,\tilde{X}_1,\dots,\tilde{X}_N) \right ], \quad (x,\mu)\in \mathbb{R}^d \times \mathcal{P}_2(\mathbb{R}^d)
	\end{equation}
	where $\tilde{X}_1,\dots, \tilde{X}_N$ are i.i.d. random variables with law $\mu$ defined on some probability space $(\tilde{\Omega},\tilde{\mathcal{F}},\tilde{\mathbb{P}})$. Now we define for $k\in \mathbb{N}$
	\begin{equation}
		\phi_k(x,\mu) := \psi_{k,1/k}(x,\mu) = \tilde{\mathbb{E}} \left [ h_{k,1/k}(x,\tilde{X}_1,\dots,\tilde{X}_k) \right ].
	\end{equation} 
	We note that $h_{k,1/k}$ is of class $C^{\infty}(\mathbb{R}^{d(k+1)})$, and its partial derivatives are given by
	\begin{equation}
		\begin{split}
			&D_{x_i} h_{k,1/k}(x_0,x_1,\dots,x_k)\\
			&= \int_{\mathbb{R}^{d(k+1)}} \phi \left ( z_0,\frac{1}{k} \sum_{i=1}^k \delta_{z_i} \right ) \eta_{\frac1k}(x_0-z_0) \cdots \eta_{\frac1k}(x_k-z_k) \frac{2k^{2}(x_i-z_i)}{(k^2|x_i-z_i|^2-1)^2} \mathrm{d}z_0 \cdots \mathrm{d}z_k
		\end{split}
	\end{equation}
	$i=0,\dots,n$. Moreover, $h_{k,1/k}$ is Lipschitz continuous. Indeed, we have
	\begin{equation}\label{Lipschitz_h_nm}
		\begin{split}
			&| h_{k,1/k}(x_0,\dots,x_k) - h_{k,1/k}(\tilde{x}_0,\dots,\tilde{x}_k) |\\
			&\leq \int_{\mathbb{R}^{d(k+1)}} \left | \phi \left ( x_0-y_0,\frac{1}{k} \sum_{i=1}^k \delta_{x_i-y_i} \right ) - \phi \left ( \tilde{x}_0-y_0,\frac{1}{k} \sum_{i=1}^k \delta_{\tilde{x}_i-y_i} \right ) \right |\\
			&\qquad\qquad\qquad\qquad\qquad\qquad\qquad\qquad\qquad\qquad\qquad\qquad \times \eta_{\frac1k}(y_0) \cdots \eta_{\frac1k}(y_k) \mathrm{d}y_0 \cdots \mathrm{d}y_k\\
			&\leq C \int_{\mathbb{R}^{d(k+1)}} \left ( |x_0-\tilde{x}_0| + d_r\left (\frac{1}{k} \sum_{i=1}^k \delta_{x_i-y_i}, \frac1k \sum_{i=1}^k \delta_{\tilde{x}_i-y_i}\right ) \right ) \eta_{\frac1k}(y_0) \cdots \eta_{\frac1k}(y_k) \mathrm{d}y_0 \cdots \mathrm{d}y_k\\
			&\leq C \left ( |x_0-\tilde{x}_0| + |\mathbf{x}- \tilde{\mathbf{x}}|_r \right ) \leq \frac{C}{\sqrt{k}}(|x_0-\tilde{x}_0| + |\mathbf{x} - \tilde{\mathbf{x}}|),
		\end{split}
	\end{equation}
	where the constant $C$ only depends on the Lipschitz constant of $\phi$. Therefore, $D_{x_i} h_{k,1/k}$, $i=0,\dots,k$, is bounded and, in order to differentiate $\phi_k$, we can differentiate under the expectation. Thus, we have
	\begin{equation}
		D_x \phi_{k}(x,\mu) = \tilde{\mathbb{E}} \left [ D_x h_{k,1/k}(x,\tilde{X}_1,\dots,\tilde{X}_k) \right ]
	\end{equation}
	and, from \cite[Equation (5.37)]{carmona_delarue_2018} it follows that
	\begin{equation}
		\partial_{\mu} \phi_{k}(x,\mu)(x_1) = k \tilde{\mathbb{E}} \left [ D_{x_1} h_{k,1/k}(x,x_1,\tilde{X}_2,\dots,\tilde{X}_k) \right ].
	\end{equation}
	The same argument as in equation \eqref{Lipschitz_h_nm} shows that $D_{x_i} h_{k,1/k}$ is Lipschitz continuous. Therefore, we have
	\begin{equation}
		D_{x_1} \partial_{\mu} \phi_k(x,\mu)(x_1) = k \tilde{\mathbb{E}} \left [ D_{x_1x_1}^2 h_{k,1/k}(x,x_1,\tilde{X}_2,\dots,\tilde{X}_k) \right ]
	\end{equation}
	and
	\begin{equation}
		\partial_{\mu}^2 \phi_k(x,\mu)(x_1,x_2) = k (k-1) \tilde{\mathbb{E}} \left [ D^2_{x_2x_1} h_{k,1/k}(x,x_1,x_2,\tilde{X}_3,\dots,\tilde{X}_k) \right ].
	\end{equation}
	Iterating these arguments shows that $\phi_k\in C^{\infty}(\mathbb{R}^d\times \mathcal{P}_2(\mathbb{R}^d))$ for each $k\in \mathbb{N}$. Moreover, since all the derivatives of $h_{k,1/k}$ are bounded, the derivatives of $\phi_k$ are also bounded.
	
	\textit{Proof of (i): Lipschitz continuity.} Due to the Lipschitz continuity of $\phi$, we have
	\begin{equation}
		\begin{split}
			&|\phi_k(x,\mu) - \phi_k(y,\beta)|\\
			&\leq \int_{\mathbb{R}^{d(k+1)}} \tilde{\mathbb{E}} \left [ \left | \phi\left ( x-y_0, \frac{1}{k} \sum_{i=1}^k \delta_{\tilde{X}_i-y_i} \right ) - \phi\left ( y-y_0, \frac{1}{k} \sum_{i=1}^k \delta_{\tilde{Y}_i-y_i} \right ) \right | \right ]\\
			&\qquad\qquad\qquad\qquad\qquad\qquad\qquad\qquad\qquad\qquad \times \eta_{\frac1k}(y_0) \cdots \eta_{\frac1k}(y_k) \mathrm{d}y_0 \cdots \mathrm{d}y_k\\
			&\leq \int_{\mathbb{R}^{d(k+1)}} C\left ( |x-y| + \tilde{\mathbb{E}} \left [ d_r \left ( \frac{1}{k} \sum_{i=1}^k \delta_{\tilde{X}_i-y_i}, \frac{1}{k} \sum_{i=1}^k \delta_{\tilde{Y}_i-y_i}\right ) \right ] \right )\\
			&\qquad\qquad\qquad\qquad\qquad\qquad\qquad\qquad\qquad\qquad \times \eta_{\frac1k}(y_0) \cdots \eta_{\frac1k}(y_k) \mathrm{d}y_0 \cdots \mathrm{d}y_k,
		\end{split}
	\end{equation}
	where $\tilde{X}_1,\dots,\tilde{X}_k$ are i.i.d. random variables with law $\mu$, and $\tilde{Y}_1,\dots,\tilde{Y}_k$, are i.i.d. random variables with law $\beta$. Next, we observe that
	\begin{equation}
		\begin{split}
			&\tilde{\mathbb{E}} \left [ d_r \left ( \frac{1}{k} \sum_{i=1}^k \delta_{\tilde{X}_i-y_i}, \frac{1}{k} \sum_{i=1}^k \delta_{\tilde{Y}_i-y_i} \right ) \right ] \leq \tilde{\mathbb{E}} \left [ |\tilde{X} - \tilde{Y} |_r \right ] = \frac{1}{k^{1/r}} \tilde{\mathbb{E}} \left [ \left ( \sum_{i=1}^k |\tilde{X}_i - \tilde{Y}_i|^r \right )^{\frac{1}{r}} \right ]\\
			&\leq \frac{1}{k^{1/r}} \left ( \tilde{\mathbb{E}} \left [ \sum_{i=1}^k |\tilde{X}_i - \tilde{Y}_i|^r \right ] \right )^{\frac{1}{r}} = \left ( \tilde{\mathbb{E}} \left [ |\tilde{X}_1 - \tilde{Y}_1|^r \right ] \right )^{\frac{1}{r}}.
		\end{split}
	\end{equation}
	Since this hold for all random variables $\tilde{X}_1$ and $\tilde{Y}_1$ with laws $\mu$ and $\beta$, respectively, not necessarily independent of each other, we have due to \eqref{definition_wasserstein_distance}
	\begin{equation}
		\tilde{\mathbb{E}} \left [ d_r \left ( \frac{1}{k} \sum_{i=1}^k \delta_{\tilde{X}_i-y_i}, \frac{1}{k} \sum_{i=1}^k \delta_{\tilde{Y}_i-y_i} \right ) \right ] \leq d_r(\mu,\beta).
	\end{equation}
	Altogether, we obtain
	\begin{equation}
		|\phi_k(x,\mu) - \phi_k(y,\beta)| \leq C \left ( |x-y| + d_r(\mu,\beta) \right ),
	\end{equation}
	which proves that the approximation is Lipschitz continuous with respect to the $|\,\cdot\,|\times d_r(\cdot,\cdot)$-distance.
	
	\textit{Proof of (ii): Uniform convergence.} First, we show that $\phi_k \to \phi$ pointwise as $k\to\infty$. Indeed, note that
	\begin{equation}\label{u_k_pointwise_convergence}
		\begin{split}
			&|\phi(x,\mu) - \phi_k(x,\mu)|\\
			&\leq \left | \phi(x,\mu) - \tilde{\mathbb{E}} \left [ \phi \left (x, \frac{1}{k} \sum_{i=1}^k \delta_{\tilde{X}_i} \right ) \right ] \right | + \left | \tilde{\mathbb{E}} \left [ \phi \left (x, \frac{1}{k} \sum_{i=1}^k \delta_{\tilde{X}_i} \right ) \right ] - \phi_k(x,\mu) \right |.
		\end{split}
	\end{equation}
	For the first term on the right-hand side, due to the Lipschitz continuity of $\phi$, we have
	\begin{equation}
		\left | \phi(x,\mu) - \tilde{\mathbb{E}} \left [ \phi \left (x, \frac{1}{k} \sum_{i=1}^k \delta_{\tilde{X}_i} \right ) \right ] \right | \leq C \tilde{\mathbb{E}} \left [ d_r\left (\mu,\frac{1}{k} \sum_{i=1}^k \delta_{\tilde{X}_i} \right ) \right ],
	\end{equation}
	where the right-hand side converges to zero as $k\to \infty$, see \cite[Theorem 5.8]{carmona_delarue_2018}. In order to treat the second term on the right-hand side of \eqref{u_k_pointwise_convergence}, we note that due to the Lipschitz continuity of $\phi$, we have for all $x_0,\dots,x_k\in \mathbb{R}^d$,
	\begin{equation}
		\begin{split}
			&\left | \phi \left ( x_0, \frac{1}{k} \sum_{i=1}^k \delta_{x_i} \right ) - h_{k,1/k}(x_0,\dots,x_k) \right |\\
			&\leq \int_{\mathbb{R}^{d(k+1)}} \left | \phi \left ( x_0, \frac{1}{k} \sum_{i=1}^k \delta_{x_i} \right ) - \phi \left ( x_0-y_0, \frac{1}{k} \sum_{i=1}^k \delta_{x_i-y_i} \right ) \right | \eta_{\frac1k}(y_0) \cdots \eta_{\frac1k}(y_k) \mathrm{d}y_0 \cdots \mathrm{d}y_k\\
			&\leq C \int_{\mathbb{R}^{d(k+1)}} \left ( | y_0 | + d_r\left ( \frac{1}{k} \sum_{i=1}^k \delta_{x_i}, \frac{1}{k} \sum_{i=1}^k \delta_{x_i-y_i} \right ) \right ) \eta_{\frac1k}(y_0) \cdots \eta_{\frac1k}(y_k) \mathrm{d}y_0 \cdots \mathrm{d}y_k\\
			&\leq C \int_{\mathbb{R}^{d(k+1)}} \left ( | y_0 | + |\mathbf{y}|_r \right ) \eta_{\frac1k}(y_0) \cdots \eta_{\frac1k}(y_k) \mathrm{d}y_0 \cdots \mathrm{d}y_k\\
			&\leq \frac{C}{k}.
		\end{split}
	\end{equation}
	Therefore, we have
	\begin{equation}
		\begin{split}
			&\left | \tilde{\mathbb{E}} \left [ \phi \left (x, \frac{1}{k} \sum_{i=1}^k \delta_{\tilde{X}_i} \right ) \right ] - \phi_{k}(x,\mu) \right |\\
			&= \left | \tilde{\mathbb{E}} \left [ \phi \left ( x, \frac{1}{k} \sum_{i=1}^k \delta_{\tilde{X}_i} \right ) - h_{k,1/k}(x,\tilde{X}_1,\dots,\tilde{X}_k) \right ] \right | \leq \frac{C}{k},
		\end{split}
	\end{equation}
	where the constant is independent of $k\in \mathbb{N}$. This concludes the proof that the right-hand side of \eqref{u_k_pointwise_convergence} tends to zero as $k\to\infty$, and therefore proves the pointwise convergence. The uniform convergence now follows from part $(i)$: Due to the Lipschitz continuity with uniform Lipschitz constant, the sequence $(\phi_k)_{k\in \mathbb{N}}$ is equicontinuous and bounded on bounded sets of $\mathcal{P}_r(\mathbb{R}^d)$. Therefore, since the sets $\mathfrak{M}^2_R$ are relatively compact in $\mathcal{P}_r(\mathbb{R}^d)$, it follows from the Arzel\`a--Ascoli theorem that $\phi_k$ converges to $\phi$ uniformly on bounded subsets of $\mathbb{R}^d\times \mathcal{P}_2(\mathbb{R}^d)$.
	
	\textit{Proof of (iii): $C^{1,1}$ regularity of the lift.} For the lift $\tilde{\phi}_{k} : \mathbb{R}^d \times E \to \mathbb{R}$ of $\phi_{k}$, we have
	\begin{equation}\label{C11_proof}
		\begin{split}
			&\| D_{(x,X)} \tilde{\phi}_k(x,X) - D_{(x,X)} \tilde{\phi}_k(y,Y) \|_{\mathbb{R}^d\times E}^2\\
			&\leq C | D_x \tilde{\phi}_k(x,X) - D_x \tilde{\phi}_k(y,X) |^2 + C |D_x \tilde{\phi}_k(y,X) - D_x \tilde{\phi}_k(y,Y) |^2\\
			&\qquad + C \| D_X \tilde{\phi}_k(x,X) - D_X \tilde{\phi}_k(y,X) \|_E^2 + C \| D_X \tilde{\phi}_k(y,X) - D_X \tilde{\phi}_k(y,Y) \|_E^2\\
			& \leq C | D_x \phi_k(x,X_{\texttt{\#}}\mathcal{L}^1) - D_x \phi_k(y,X_{\texttt{\#}}\mathcal{L}^1) |^2 + C |D_x \phi_k(y,X_{\texttt{\#}}\mathcal{L}^1) - D_x \phi_k(y,Y_{\texttt{\#}}\mathcal{L}^1) |^2\\
			&\qquad + C \int_{\Omega} | \partial_{\mu} \phi_k(x,X_{\texttt{\#}}\mathcal{L}^1)(X(\omega)) - \partial_{\mu} \phi_k(y,X_{\texttt{\#}}\mathcal{L}^1)(X(\omega)) |^2 \mathrm{d}\omega\\
			&\qquad + C \int_{\Omega} \left | \partial_{\mu} \phi_k(y,X_{\texttt{\#}}\mathcal{L}^1)(X(\omega)) - \partial_{\mu}\phi_k(y,Y_{\texttt{\#}}\mathcal{L}^1)(X(\omega)) \right |^2 \mathrm{d}\omega \\
			&\qquad + C \int_{\Omega} \left | \partial_{\mu} \phi_k(y,Y_{\texttt{\#}}\mathcal{L}^1)(X(\omega)) - \partial_{\mu} \phi_k(y,Y_{\texttt{\#}}\mathcal{L}^1)(Y(\omega)) \right |^2 \mathrm{d}\omega.
		\end{split}
	\end{equation}
	Let us first consider the second to last term. Using the boundedness of $\partial_{\mu}^2 \phi_k$, we obtain
	\begin{equation}
		\begin{split}
			& \int_{\Omega} \left | \partial_{\mu} \phi_k(y,X_{\texttt{\#}}\mathcal{L}^1)(X(\omega)) - \partial_{\mu}\phi_k(y,Y_{\texttt{\#}}\mathcal{L}^1)(X(\omega)) \right |^2 \mathrm{d}\omega\\
			&\leq \int_{\Omega} \sup_{t\in [0,1]} \left | \tilde{\mathbb{E}} \left [ \partial_{\mu}^2 \phi_k(y,(t\tilde X+(1-t) \tilde Y)_{\texttt{\#}}\mathcal{L}^1)(X(\omega),t \tilde{X}+(1-t) \tilde{Y}) (\tilde X - \tilde Y) \right ] \right |^2 \mathrm{d}\omega\\
			&\leq C \tilde{\mathbb{E}} \left [ |\tilde X - \tilde Y |^2 \right ] = C \|X-Y\|_E^2,
		\end{split}
	\end{equation}
	where $(\tilde X,\tilde Y)$ is a copy of the random vector $(X,Y)$. Here, we used the same argument as in \cite[Remark 5.27]{carmona_delarue_2018} combined with the mean value theorem for vector-valued functions. Similarly, for the last term in equation \eqref{C11_proof}, using the boundedness of $D_{x_1}\partial_{\mu} \phi_k$, we obtain
	\begin{equation}
		\begin{split}
			&\int_{\Omega} \left | \partial_{\mu} \phi_k(y,Y_{\texttt{\#}}\mathcal{L}^1)(X(\omega)) - \partial_{\mu} \phi_k(y,Y_{\texttt{\#}}\mathcal{L}^1)(Y(\omega)) \right |^2 \mathrm{d}\omega\\
			&\leq \int_{\Omega} \sup_{t\in [0,1]} \left |D_{x_1}\partial_{\mu} \phi_k(y,Y_{\texttt{\#}}\mathcal{L}^1)(tX(\omega)+(1-t) Y(\omega)) (X(\omega)-Y(\omega)) \right |^2 \mathrm{d}\omega\\
			&\leq C \|X-Y\|_E^2.
		\end{split}
	\end{equation}
	The remaining terms on the right-hand side of equation \eqref{C11_proof} can be treated with similar arguments. This proves the $C^{1,1}$ regularity of $\tilde{\phi}_{k}$\footnote{Note that the Lipschitz constant of the derivative of $\tilde{\phi}_{k}$ depends on $k$.}.
	
	\textit{Proof of (iv): Convexity of the lift.} Let $(\tilde{X}_i,\tilde{Y}_i)$, $i=1,\dots,k$, be independent copies of the random vector $(X,Y)$. Then, we have for $\lambda\in [0,1]$,
	\begin{equation}
		\begin{split}
			&\tilde{\phi}_{k}(\lambda x + (1-\lambda)y,\lambda X + (1-\lambda)Y) = \phi_{k}(\lambda x+(1-\lambda)y, (\lambda X + (1-\lambda) Y)_{\texttt{\#}} \mathcal{L}^1)\\
			&= \tilde{\mathbb{E}} \bigg [ \int_{\mathbb{R}^{d(k+1)}} \phi \bigg (\lambda x + (1-\lambda)y-y_0, \frac{1}{k} \sum_{i=1}^k \delta_{\lambda \tilde{X}_i + (1-\lambda)\tilde{Y}_i-y_i} \bigg )\\
			&\qquad\qquad\qquad\qquad\qquad\qquad\qquad\qquad\qquad\qquad \times \eta_{\frac1k}(y_0)\cdots \eta_{\frac1k}(y_k) \mathrm{d}y_0 \cdots \mathrm{d}y_k \bigg ]\\
			&= \tilde{\mathbb{E}} \bigg [ \int_{\mathbb{R}^{d(k+1)}} \tilde{\phi} \bigg (\lambda x + (1-\lambda)y-y_0, \sum_{i=1}^k (\lambda \tilde{X}_i + (1-\lambda)\tilde{Y}_i-y_i) \mathbf{1}_{A^k_i} \bigg )\\
			&\qquad\qquad\qquad\qquad\qquad\qquad\qquad\qquad\qquad\qquad \times \eta_{\frac1k}(y_0)\cdots \eta_{\frac1k}(y_k) \mathrm{d}y_0 \cdots \mathrm{d}y_k \bigg ].
		\end{split}
	\end{equation}
	Thus, using the convexity of $\tilde{\phi}$, we obtain
	\begin{equation}
		\begin{split}
			&\tilde{\phi}_{k}(\lambda x + (1-\lambda)y,\lambda X + (1-\lambda)Y)\\
			&\geq \tilde{\mathbb{E}} \left [ \int_{\mathbb{R}^{d(k+1)}} \lambda \tilde{\phi} \left (x-y_0, \sum_{i=1}^k (\tilde{X}_i-y_i) \mathbf{1}_{A^k_i} \right ) \eta_{\frac1k}(y_0)\cdots \eta_{\frac1k}(y_k) \mathrm{d}y_0 \cdots \mathrm{d}y_k \right ]\\
			&\quad + \tilde{\mathbb{E}} \left [ \int_{\mathbb{R}^{d(k+1)}} (1-\lambda) \tilde{\phi} \left (y-y_0, \sum_{i=1}^k (\tilde{Y}_i-y_i) \mathbf{1}_{A^k_i} \right ) \eta_{\frac1k}(y_0)\cdots \eta_{\frac1k}(y_k) \mathrm{d}y_0 \cdots \mathrm{d}y_k \right ]\\
			&= \tilde{\mathbb{E}} \left [ \int_{\mathbb{R}^{d(k+1)}} \lambda \phi \left ( x-y_0,\frac{1}{k} \sum_{i=1}^k \delta_{\tilde{X}_i-y_i} \right ) \eta_{\frac1k}(y_0)\cdots \eta_{\frac1k}(y_k) \mathrm{d}y_0 \cdots \mathrm{d}y_k \right ]\\
			&\quad + \tilde{\mathbb{E}} \left [ \int_{\mathbb{R}^{d(k+1)}} (1-\lambda) \phi \left (y-y_0, \frac{1}{k} \sum_{i=1}^k \delta_{\tilde{Y}_i-y_i} \right )\eta_{\frac1k}(y_0)\cdots \eta_{\frac1k}(y_k) \mathrm{d}y_0 \cdots \mathrm{d}y_k \right ]\\
			&= \lambda \tilde{\phi}_{k}(x,X) + (1-\lambda) \tilde{\phi}_{k}(y,Y),
		\end{split}
	\end{equation}
	which proves the convexity of the lift of $\phi_k$.
\end{proof}

\subsection{Convergence of the Approximating Value Functions}

\begin{assumption}\label{assumption_running_terminal_cost_relaxed}
	Let $r\in [1,2)$.
	\begin{enumerate}[label=(\roman*)]
		\item Let $l_1:\mathbb{R}^d \times \mathcal{P}_2(\mathbb{R}^d)\to \mathbb{R}$ be Lipschitz continuous with respect to the $|\,\cdot\,|\times d_r(\cdot,\cdot)$-distance, i.e., there is a constant $C$ such that
		\begin{equation}
			|l_1(x,\mu) - l_1(y,\beta)| \leq C(|x-y|+d_r(\mu,\beta))
		\end{equation}
		for all $x,y\in \mathbb{R}^d$ and $\mu,\beta\in \mathcal{P}_2(\mathbb{R}^d)$.
		\item Let there be constants $C_1, C_2,C_3$ such that $l_2\in C^{1,1}(\mathbb{R}^d)$ satisfies
		\begin{equation}
			-C_1 + C_2|p|^2 \leq l_2(p) \leq C_1 + C_3|p|^2
		\end{equation}
		for all $p\in \mathbb{R}^d$. Moreover, let $l_2$ be convex.
		\item Let $\mathcal{U}_T$ be Lipschitz continuous on $\mathcal{P}_r(\mathbb{R}^d)$.
	\end{enumerate}
\end{assumption}

\begin{theorem}
	Let Assumptions \ref{Assumption_Linear_State_Equation} and \ref{assumption_running_terminal_cost_relaxed} be satisfied. Then the projection $V_n$ of $V$ defined in \eqref{Projection_V_n} coincides with the finite dimensional value function $u_n$ defined in \eqref{finite_dimensional_value_function}.
\end{theorem}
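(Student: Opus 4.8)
The strategy is to approximate the convex, Lipschitz coefficients $l_1$ and $\mathcal{U}_T$ by smooth coefficients to which the projection theorem of Section~\ref{section:projection_C11} already applies, and then to pass to the limit. (As in Theorem~\ref{theorem:convergence}, under Assumptions~\ref{Assumption_Linear_State_Equation} and \ref{assumption_running_terminal_cost_relaxed} the function $V$ is still well defined, e.g. as the locally uniform limit of the $u_n$, since the estimates of Propositions~\ref{Lipschitz_Continuity_u_n} and \ref{Lipschitz_Time_u_n} only use the Lipschitz and growth bounds on the data.) We note at the outset that the inequality $V_n\le u_n$ holds verbatim as in Lemma~\ref{lemma_subsolution} and Proposition~\ref{proposition:V_n_leq_u_n}, since that argument uses only the identity of costs along piecewise-constant controls; so the real content is the reverse inequality, which the approximation will produce directly.

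\textbf{The approximating problems.} Fix $r\in[1,2)$ and, using Theorem~\ref{Theorem_Approximation_Wasserstein_Space}, pick smooth approximations $l_1^k$ of $l_1$ and $\mathcal{U}_T^k$ of $\mathcal{U}_T$: they are Lipschitz with respect to $|\cdot|\times d_r$ with a constant independent of $k$ (part (i)), converge uniformly on bounded sets (part (ii)), have lifts in $C^{1,1}$ (part (iii)), and — because $\tilde l_1$ and $U_T$ are convex by Assumption~\ref{Assumption_Linear_State_Equation} — have convex lifts (part (iv)). Keep $l_2$ and the affine state equation unchanged, and denote by $u_n^k,V^k$ the value functions of the $k$-th problem and by $V_n^k$ the projection of $V^k$ as in \eqref{Projection_V_n}. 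Each $k$-th problem satisfies Assumptions~\ref{Assumption_b_sigma_lipschitz}, \ref{Assumption_running_terminal_cost} and \ref{Assumption_Linear_State_Equation} at once: Assumption~\ref{Assumption_Linear_State_Equation} is unchanged; Assumption~\ref{Assumption_b_sigma_lipschitz} follows from the affine structure (an affine-linear continuous lift forces $\sigma(x,\mu)=\sigma^1x+\sigma^2\!\int y\,\mathrm dy\,\mu(\mathrm dy)+\sigma^3$, whose lift is smooth and jointly Lipschitz with respect to $|\cdot|\times d_r$, using $d_1\le d_r$, and similarly for $b$); and Assumption~\ref{Assumption_running_terminal_cost} holds because $l_2$ is convex with the stated growth — so Assumption~\ref{Assumption_running_terminal_cost}(ii) holds with $\nu=0$ — and $l_1^k,\mathcal{U}_T^k$ have the regularity above. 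Hence Propositions~\ref{Value_Function_Semiconcave_2} and \ref{Value_Function_Semiconvex_2} give that $V^k(t,\cdot)$ is convex and semiconcave with $t$-independent constants, whence, by Remarks~\ref{remark_joint_continuity} and \ref{rem:Lipint}, $V^k(t,\cdot)\in C^{1,1}(E)$ for every $t$ and $DV^k$ is continuous on $[0,T]\times E$. Theorem~\ref{theorem:projection_C11} then applies to the $k$-th problem and yields
\begin{equation}\label{plan:Vnk=unk}
	V_n^k(t,\mathbf{x})=u_n^k(t,\mathbf{x})\qquad\text{for all }(t,\mathbf{x})\in[0,T]\times(\mathbb{R}^d)^n.
\end{equation}

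\textbf{Passing to the limit.} Finally I would let $k\to\infty$ in \eqref{plan:Vnk=unk}. On the left: the $V^k$ are uniformly Lipschitz in $X$ (Proposition~\ref{Value_Function_Lipschitz}, whose constant depends only on the uniformly bounded Lipschitz constants of $b,\sigma,l_1^k,\mathcal{U}_T^k$) and uniformly bounded on bounded sets, and the associated Hamiltonians and terminal data converge locally uniformly to those of the original problem; stability of L-viscosity solutions plus the comparison principle \cite[Theorem 3.54]{fabbri_gozzi_swiech_2017} in the uniqueness class then identify $V^k\to V$ locally uniformly on $[0,T]\times E$, so for fixed $\mathbf{x}$ we get $V_n^k(t,\mathbf{x})=V^k(t,X^{\mathbf{x}}_n)\to V(t,X^{\mathbf{x}}_n)=V_n(t,\mathbf{x})$. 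On the right: $u_n^k\to u_n$ pointwise — restricting, as in the proof of Proposition~\ref{Lipschitz_Time_u_n}, to controls valued in $B_{K\sqrt n}(0)$, splitting the expectation over $\{\sup_s|\mathbf{X}(s)|_r\le R\}$ and its complement, and combining the uniform convergence of $l_1^k,\mathcal{U}_T^k$ on bounded sets with the a priori bounds of Proposition~\ref{Prop_Finite_A_Priori} and Chebyshev's inequality gives $|u_n^k(t,\mathbf{x})-u_n(t,\mathbf{x})|\to0$ (alternatively, use stability of viscosity solutions of \eqref{intro:finite_dimensional_HJB} together with comparison). Letting $k\to\infty$ in \eqref{plan:Vnk=unk} then yields $V_n=u_n$.

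\textbf{Main obstacle.} The delicate part is the double limit: the whole argument hinges on all quantitative inputs — the Lipschitz constants of the $V^k$, the two comparison principles (on $(\mathbb{R}^d)^n$ and on $E$), and the a priori state estimates — being uniform in $k$, which is exactly what parts (i)–(ii) of Theorem~\ref{Theorem_Approximation_Wasserstein_Space} are designed to provide. Equally crucial is the convexity-preservation in part (iv): without it $V^k$ would fail to be semiconvex in the linear setting and Theorem~\ref{theorem:projection_C11} could not be invoked at all. The verification that Assumption~\ref{Assumption_Linear_State_Equation} supplies the portion of Assumption~\ref{Assumption_b_sigma_lipschitz} that is actually needed is routine but should be written out carefully.
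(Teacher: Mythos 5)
Your proposal follows the paper's own route almost step for step: approximate $l_1,\mathcal{U}_T$ by the convexity-preserving mollification of Theorem \ref{Theorem_Approximation_Wasserstein_Space}, observe that the approximating problems fall under the Case-2 results (Propositions \ref{Value_Function_Semiconcave_2}, \ref{Value_Function_Semiconvex_2} with $\nu=0$, plus Remarks \ref{remark_joint_continuity} and \ref{rem:Lipint}) so that $DV^k$ is continuous, invoke Theorem \ref{theorem:projection_C11} to get $V^k_n=u^k_n$, and pass to the limit; your argument for $u_n^k\to u_n$ (restriction to controls bounded by $K\sqrt n$, splitting over bounded and unbounded state events, uniform convergence of $l_1^k,\mathcal{U}_T^k$ on bounded sets combined with the a priori bounds and a Chebyshev estimate) is precisely the estimate the paper carries out, in fact on the lifted side.

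The one substantive deviation is the step $V^k\to V$, which you want to deduce from stability of L-viscosity solutions plus comparison, and as written this has a gap. First, to identify the limit of the $V^k$ with $V$ you need to know that $V$ itself is the unique L-viscosity solution of \eqref{intro:lifted_HJB} in the uniqueness class under the relaxed Assumption \ref{assumption_running_terminal_cost_relaxed}; Theorem \ref{theorem:convergence} is stated only under Assumption \ref{Assumption_running_terminal_cost}, so this characterization would have to be re-proved in the relaxed setting (plausible, since its proof uses only Lipschitz bounds, but it is an extra ingredient you do not supply). Second, extracting a locally uniformly convergent (sub)sequence of $V^k$ on $[0,T]\times E$ does not follow from the uniform Lipschitz bounds alone, because bounded subsets of $E$ are not compact; one would have to work at the level of $\mathcal{P}_r(\mathbb{R}^d)$ and the relative compactness of $\mathfrak{M}^2_R$, or use half-relaxed limits, neither of which you address. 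The paper avoids both issues by the same elementary device you already use on the finite-dimensional side: by the uniform Lipschitz constants one may restrict both $V$ and $V^k$ to a common class $\mathcal{A}^K_t$ of uniformly bounded controls (as in \cite[Proposition 3.1]{mayorga_swiech_2023}), so that $|V(t,X)-V^k(t,X)|\leq\sup_{a(\cdot)\in\mathcal{A}^K_t}|J(t,X;a(\cdot))-J^k(t,X;a(\cdot))|$, and the right-hand side is estimated exactly as in your treatment of $u_n^k$; replacing your viscosity-stability step by this estimate closes the gap. Two minor remarks: the preliminary observation $V_n\leq u_n$ is superfluous once the approximation yields equality; and your parenthetical claim that an affine linear continuous lift forces $\sigma(x,\mu)=\sigma^1x+\sigma^2\int y\,\mu(\mathrm{d}y)+\sigma^3$ is asserted without proof — it matches the intent of Assumption \ref{Assumption_Linear_State_Equation}(i) as illustrated by Example 2.6(ii), and some such verification (or an explicit assumption) is indeed needed so that the finite-dimensional machinery behind Theorem \ref{theorem:projection_C11} applies, a point the paper leaves implicit as well.
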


\begin{remark}
	Note that in comparison with Assumption \ref{Assumption_running_terminal_cost}, we dropped the $C^{1,1}$ regularity assumption on the lifts of the coefficients $l_1$ and $\mathcal{U}_T$ of the cost functional. Instead, we consider an approximation as in Theorem \ref{Theorem_Approximation_Wasserstein_Space}.
\end{remark}

\begin{proof}
	For the coefficients $l_1:\mathbb{R}^d \times \mathcal{P}_2(\mathbb{R}^d) \to \mathbb{R}$ and $\mathcal{U}_T:\mathcal{P}_2(\mathbb{R}^d) \to \mathbb{R}$ in the cost functional, we consider their approximations from Theorem \ref{Theorem_Approximation_Wasserstein_Space}, denoted by $l_1^k$ and $\mathcal{U}_T^k$. Thus, the approximation of the lifted value function is given by
	\begin{equation}
		\begin{split}
			V^k(t,X) &:= \inf_{a(\cdot) \in \mathcal{A}_t} J^k(t,X,a(\cdot))\\
			&:= \inf_{a(\cdot) \in \mathcal{A}_t} \mathbb{E} \left [ \int_t^T \left ( L_1^k(X(s)) + L_2(a(s)) \right ) \mathrm{d}s + U_T^k(X(T)) \right ]
		\end{split}
	\end{equation}
	where
	\begin{equation}
		L_1^k(X) = \int_{\Omega} l_1^k(X(\omega),X_{\texttt{\#}}\mathcal{L}^1) \mathrm{d}\omega
	\end{equation}
	and
	\begin{equation}
		U_T^k(X) = \mathcal{U}^k_T(X_{\texttt{\#}}\mathcal{L}^1).
	\end{equation}
	Furthermore, the approximation of the value function for the finite dimensional problem is given by
	\begin{equation}
		\begin{split}
			u_n^k(t,\mathbf{x}) &:= \inf_{\mathbf{a}(\cdot)\in \mathcal{A}^n_t} J_n^k(t,\mathbf{x};\mathbf{a}(\cdot))\\
			&:= \mathbb{E} \left [ \int_t^T \left ( \frac1n \sum_{i=1}^n \left ( l_1^k(X_i(s),\mu_{\mathbf{X}(s)}) + l_2(a_i(s)) \right ) \right ) \mathrm{d}s + \mathcal{U}_T^k(\mu_{\mathbf{X}(T)}) \right ].
		\end{split}
	\end{equation}
	
	From Subsection \ref{subsection:C11_regularity} and Theorem \ref{theorem:projection_C11}, we know that the projection of $V^k$ coincides with $u_n^k$. Now, it remains to show that $V^k$ converges to $V$ as $k\to \infty$ and $u_n^k$ converges to $u_n$ as $k\to \infty$.

	Consider
	\begin{equation}
		\tilde{H}^k(X,\beta,P) := \int_{\Omega} \left ( -b(X(\omega),\beta) \cdot P(\omega) - l^k_1(X(\omega),\beta) + \sup_{q\in \mathbb{R}^d} (q\cdot P(\omega) - l_2(q)) \right ) \mathrm{d}\omega.
	\end{equation}
	The functions $V^k$ are viscosity solutions of \eqref{intro:lifted_HJB} with $\tilde{H}, U_T$ replaced by $\tilde{H}^k,U_T^k$. Since the $V^k(t,\cdot)$ are Lipschitz continuous with Lipschitz constants independent of $k$, using the same arguments as in the proof of \cite[Proposition 3.1]{mayorga_swiech_2023} we obtain that there is a constant $K>0$ such that
	\begin{equation}
		V^k(t,X) = \inf_{a(\cdot) \in \mathcal{A}^K_t} \mathbb{E} \left [ \int_t^T \left ( L_1^k(X(s)) + L_2(a(s)) \right ) \mathrm{d}s + U_T^k(X(T)) \right ],
	\end{equation}
	where $\mathcal{A}_t^K = \{ a(\cdot) \in \mathcal{A}_t : a(\cdot) \text{ has values in } B_K(0) \}$ and the same set $\mathcal{A}_t^K$ can be used in the definition of $V$. Therefore,
	\begin{equation}
		\begin{split}
			|V(t,X) - V^k(t,X)| &= \left | \inf_{a(\cdot)\in \mathcal{A}_t^K} J(t,X;a(\cdot)) - \inf_{a(\cdot)\in \mathcal{A}_t^K} J^k(t,X;a(\cdot)) \right |\\
			&\leq \sup_{a(\cdot)\in \mathcal{A}^K_t} \left | J(t,X;a(\cdot)) - J^k(t,X;a(\cdot)) \right |.
		\end{split}
	\end{equation}
	Next, we are going to show that the right-hand side converges to zero as $k$ tends to infinity. We first consider the terms involving $L_1^k$ and $L_1$. For any control $a(\cdot)\in \mathcal{A}_t^K$ and any $R>0$, we have
	\begin{equation}\label{convergence_J_k_Step_1}
		\begin{split}
			&\mathbb{E} \left [ \int_t^T \int_{\Omega} \left | l_1^k(X(s,\omega),X(s)_{\texttt{\#}} \mathcal{L}^1) - l_1(X(s,\omega),X(s)_{\texttt{\#}} \mathcal{L}^1) \right | \mathrm{d}\omega \mathrm{d}s \right ]\\
			&\leq \mathbb{E} \bigg [ \int_t^T \mathbf{1}_{\{\|X(s) \|_E \leq R\}} \\
			&\qquad\qquad \int_{\Omega} \mathbf{1}_{\{|X(s,\omega) | \leq R\}} \left | l_1^k(X(s,\omega),X(s)_{\texttt{\#}} \mathcal{L}^1) - l_1(X(s,\omega),X(s)_{\texttt{\#}} \mathcal{L}^1) \right | \mathrm{d}\omega \mathrm{d}s \bigg ]\\
			&\quad + \mathbb{E} \left [ \int_t^T \int_{\Omega} \mathbf{1}_{\{|X(s,\omega) | > R\}} \left | l_1^k(X(s,\omega),X(s)_{\texttt{\#}} \mathcal{L}^1) - l_1(X(s,\omega),X(s)_{\texttt{\#}} \mathcal{L}^1) \right | \mathrm{d}\omega \mathrm{d}s \right ]\\
			&\quad +\mathbb{E} \left [ \int_t^T \mathbf{1}_{\{\|X(s) \|_E > R\}} \int_{\Omega} \left | l_1^k(X(s,\omega),X(s)_{\texttt{\#}} \mathcal{L}^1) - l_1(X(s,\omega),X(s)_{\texttt{\#}} \mathcal{L}^1) \right | \mathrm{d}\omega \mathrm{d}s \right ].
		\end{split}
	\end{equation}
	For the first term on the right-hand side, we have
	\begin{equation}
		\begin{split}
			&\mathbb{E} \bigg [ \int_t^T \mathbf{1}_{\{\|X(s) \|_E \leq R\}}\\
			&\qquad\qquad\qquad \int_{\Omega} \mathbf{1}_{\{|X(s,\omega) | \leq R\}} \left ( l_1^k(X(s,\omega),X(s)_{\texttt{\#}} \mathcal{L}^1) - l_1(X(s,\omega),X(s)_{\texttt{\#}} \mathcal{L}^1) \right ) \mathrm{d}\omega \mathrm{d}s \bigg ]\\
			&\leq \mathbb{E} \left [ \int_t^T \int_{\Omega} \sup_{(x,\mu)\in B_R(0) \times \mathfrak{M}^2_{R^2}} \left | l_1^k(x,\mu) - l_1(x,\mu) \right | \mathrm{d}\omega \mathrm{d}s \right ],
		\end{split}
	\end{equation}
	which converges to zero as $k$ tends to infinity due to Theorem \ref{Theorem_Approximation_Wasserstein_Space}. For the second term on the right-hand side of inequality \eqref{convergence_J_k_Step_1}, we obtain using H\"older's inequality and the Lipschitz continuity of $l_1^k$ and $l_1$
	\begin{equation}
		\begin{split}
			&\mathbb{E} \left [ \int_t^T \int_{\Omega} \mathbf{1}_{\{|X(s,\omega) | > R\}} \left ( l_1^k(X(s,\omega),X(s)_{\texttt{\#}} \mathcal{L}^1) - l_1(X(s,\omega),X(s)_{\texttt{\#}} \mathcal{L}^1) \right ) \mathrm{d}\omega \mathrm{d}s \right ]\\
			&\leq (\mathbb{P} \otimes \mathcal{L}^1 \otimes \mathcal{L}^1)( \{ (\omega^{\prime},s,\omega)\in \Omega^{\prime}\times [t,T]\times\Omega: |X(s,\omega)|>R \})^{\frac12}\\
			&\quad \times C \left ( \mathbb{E} \left [ \int_t^T \int_{\Omega} \left ( 1 + |X(s,\omega)|^2 + \mathcal{M}_2(X(s)_{\texttt{\#}}\mathcal{L}^1) \right ) \mathrm{d}\omega \mathrm{d}s \right ] \right )^{\frac12}.
		\end{split}
	\end{equation}
	For the first term, we have due to Lemma \ref{Infinite_Dimensional_A_Priori}
	\begin{equation}
		\begin{split}
			&(\mathbb{P} \otimes \mathcal{L}^1 \otimes \mathcal{L}^1)( \{ (\omega^{\prime},s,\omega)\in \Omega^{\prime}\times [t,T]\times\Omega: |X(s,\omega)|>R \})\\
			&\leq \frac{\| X(\cdot) \|_{L^2(\Omega^{\prime}\times [t,T]\times \Omega)}^2}{R^2}\\
			&\leq \frac{C \left ( 1+\|X\|_E^2 + \|a(\cdot)\|_{M^2(t,T;E)}^2 \right )}{R^2} \leq \frac{C \left ( 1+\|X\|_E^2 \right )}{R^2},
		\end{split}
	\end{equation}
	where in the last inequality we used the fact that $a(\cdot)\in \mathcal{A}_t^K$. For the second term, using again Lemma \ref{Infinite_Dimensional_A_Priori}, we obtain
	\begin{equation}\label{a_priori_estimate}
		\begin{split}
			&\mathbb{E} \left [ \int_t^T \int_{\Omega} \left ( 1 + |X(s,\omega)|^2 + \mathcal{M}_2(X(s)_{\texttt{\#}}\mathcal{L}^1) \right ) \mathrm{d}\omega \mathrm{d}s \right ]\\
			&\leq C(1+\|X\|_E^2 + \|a(\cdot)\|_{M^2(t,T;E)}^2 ) \leq C (1 + \|X\|_E^2).
		\end{split}
	\end{equation}
	For the third term on the right-hand side of inequality \eqref{convergence_J_k_Step_1}, we have
	\begin{equation}
		\begin{split}
			&\mathbb{E} \left [ \int_t^T \mathbf{1}_{\{\|X(s) \|_E > R\}} \int_{\Omega} \left ( l_1^k(X(s,\omega),X(s)_{\texttt{\#}} \mathcal{L}^1) - l_1(X(s,\omega),X(s)_{\texttt{\#}} \mathcal{L}^1) \right ) \mathrm{d}\omega \mathrm{d}s \right ]\\
			&\leq (\mathbb{P} \otimes \mathcal{L}^1)( \{ (\omega^{\prime},s)\in \Omega^{\prime}\times [t,T]: \|X(s)\|_E>R \})^{\frac12}\\
			&\quad \times C \left ( \mathbb{E} \left [ \int_t^T \int_{\Omega} \left ( 1 + |X(s,\omega)|^2 + \mathcal{M}_2(X(s)_{\texttt{\#}}\mathcal{L}^1) \right ) \mathrm{d}\omega \mathrm{d}s \right ] \right )^{\frac12}.
		\end{split}
	\end{equation}
	For the first term, we have due to Lemma \ref{Infinite_Dimensional_A_Priori}
	\begin{equation}
		(\mathbb{P} \otimes \mathcal{L}^1)( \{ (\omega^{\prime},s)\in \Omega^{\prime}\times [t,T]: \|X(s)\|_E>R \}) \leq \frac{\|X(\cdot)\|_{L^2(\Omega^{\prime}\times[t,T];E)}^2}{R^2} \leq \frac{C \left ( 1+\|X\|_E^2 \right )}{R^2}. 
	\end{equation}
	The second term can be estimated as in \eqref{a_priori_estimate}.
	
	We note that the estimates above do not depend on $a(\cdot)\in  \mathcal{A}^K_t$. The terms involving $U_T^k$ and $U_T$ are estimated similarly. Therefore, taking the limit superior as $k\to\infty$ and then sending $R\to\infty$ 
	proves the pointwise convergence of $V^k$ to $V$. The convergence of $u^k_n$ to $u_n$ follows along the same lines.
\end{proof}

\appendix
\section{Definition of Viscosity Solution of Equations in Hilbert Spaces}

We briefly recall the notion of viscosity solution for equations in Hilbert spaces as introduced in \cite{crandall_lions_1985} for first order equations and generalized to second order equations in \cite{lions_1989}. See also \cite[Section 3.3.1]{fabbri_gozzi_swiech_2017}. Let us emphasize that the equations in the present manuscript do not involve an unbounded operator.

Let $\mathcal{E}$ be a real Hilbert space and let $F:(0,T)\times \mathcal{E} \times \mathbb{R} \times \mathcal{E} \times S(\mathcal{E}) \to \mathbb{R}$, where $S(\mathcal{E})$ denotes the set of bounded, self-adjoint operators on $\mathcal{E}$. We assume that
$F(t,X,s,P,Z)\leq F(t,X,s,P,Z')$ for all $t\in (0,T),X\in \mathcal{E},s\in\mathbb{R},P\in \mathcal{E},Z,Z'\in S(\mathcal{E})$ such that $Z\leq Z'$. Let $G:\mathcal{E} \to \mathbb{R}$. We consider terminal value problems of the type
\begin{equation}\label{appendix_pde}
	\begin{cases}
		\partial_t v + F(t,X,v,Dv,D^2v) = 0, \quad (t,X) \in (0,T)\times \mathcal{E}\\
		v(T,X) = G(X), \quad X\in \mathcal{E}.
	\end{cases}
\end{equation}

\begin{definition}\label{appendix_definition_pde}
	An upper semicontinuous function $v : (0,T]\times \mathcal{E} \to \mathbb{R}$ that is bounded on bounded sets is a viscosity subsolution of \eqref{appendix_pde} if $v(T,X) \leq G(X)$, $X\in \mathcal{E}$, and whenever $u-\varphi$ has a local maximum at a point $(t,X) \in (0,T)\times \mathcal{E}$ for a function $\varphi \in C^{1,2}((0,T)\times \mathcal{E})$ then
	\begin{equation}
		\partial_t \varphi(t,X) + F(t,X,v(t,X),D\varphi(t,X),D^2\varphi(t,X)) \geq 0.
	\end{equation}
	A lower semicontinuous function $v : (0,T]\times \mathcal{E} \to \mathbb{R}$ that is bounded on bounded sets is a viscosity supersolution of \eqref{appendix_pde} if $v(T,X) \geq G(X)$, $X\in \mathcal{E}$, and whenever $u-\varphi$ has a local minimum at a point $(t,X) \in (0,T)\times \mathcal{E}$ for a function $\varphi \in C^{1,2}((0,T)\times \mathcal{E})$ then
	\begin{equation}
		\partial_t \varphi(t,X) + F(t,X,v(t,X),D\varphi(t,X),D^2\varphi(t,X)) \leq 0.
	\end{equation}
	A viscosity solution of \eqref{appendix_pde} is a function which is both a viscosity subsolution and a viscosity supersolution of \eqref{appendix_pde}.
\end{definition}

\end{document}